\documentclass[a4paper,10pt]{article}

%-------------------------------------------------%
\topmargin=-13mm
\oddsidemargin=-7mm
\evensidemargin=-7mm
\textheight=24cm
\textwidth=17.5cm
\parindent=0pt
\parskip=5pt
%--------------------------------------------------%
%
%%%%%%%%%%%%%%%%%%%%%%%%%%%%%%%%%
\usepackage{amssymb,amsfonts}
\usepackage{amsmath,amsfonts,amsthm}
\usepackage[dvipdfmx]{graphicx}
\usepackage[dvipdfmx]{hyperref}
\usepackage{cite}
\usepackage{caption}
\usepackage{subcaption}
\usepackage{mathtools}
\usepackage{booktabs}
\usepackage{cite}
\usepackage[T1]{fontenc}
\usepackage[utf8]{inputenc}
\newcommand{\pd}[2]{\frac{\partial #1}{\partial #2}}
\newcommand{\Dt}[1]{\frac{\textnormal{D} #1}{\textnormal{D}t}}
\newcommand{\di}{\textnormal{div\,}}
\newcommand{\D}[1]{\textnormal{D}(#1)}
\newcommand{\vcko}{\mathbf{v}}
\newcommand{\ucko}{\mathbf{u}}

\newcommand{\wcko}{\mathbf{w}}
\newcommand{\nko}{\mathbf{n}}
\newcommand{\Ccko}{\mathbf{C}}
\newcommand{\Dcko}{\mathbf{D}}
\newcommand{\Icko}{\mathbf{I}}
\newcommand{\Fko}{\mathbf{F}}
\newcommand{\fko}{\mathbf{f}}
\newcommand{\gcko}{\mathbf{g}}
\newcommand{\tr}[1]{\textnormal{tr}\, #1}
\newcommand{\trC}{\textnormal{tr}\, \mathbf{C}}
\newcommand{\n}[2]{\left\lVert{#2}\right\rVert_{#1}}
\newcommand{\trian}{\mathcal{T}_h}
\newcommand{\au}{\ucko_h^n}
\newcommand{\aou}{\ucko_h^{n-1}}
\newcommand{\ap}{p_h^n}

\newcommand{\aC}{\Ccko_h^n}
\newcommand{\aoC}{\Ccko_h^{n-1}}

\newcommand{\atrC}{\tr{\Ccko}_h^n \, }
\newcommand{\aotrC}{\tr{\Ccko}_h^{n-1} \, }

\newcommand{\af}{\fko_h^n}
\newcommand{\aF}{\Fko_h^n}

\newcommand{\SPu}{\hat{\ucko}_h}
\newcommand{\SPp}{\hat{p}_h}
\newcommand{\SPC}{\hat{\Ccko}_h}
\renewcommand{\(}{\left(}
\renewcommand{\)}{\right)}
\newcommand{\defeq}{\vcentcolon=}
\newcommand{\eqdef}{=\vcentcolon}
\newtheorem{defin}{Definition}
\newtheorem{theo}{Theorem}
\newtheorem{prop}{Proposition}
\newtheorem{hypo}{Hypothesis}
\newtheorem{lemma}{Lemma}
\newtheorem{remark}{Remark}
\newtheorem*{example}{Example}
\newcommand{\fz}{\frac}
\newcommand{\prz}[2]{ \frac{\partial{#1}}{\partial{#2}} }
\newcommand{\pz}{\partial}
\newcommand{\lA}{\langle}
\newcommand{\rA}{\rangle}

\newcommand{\ol}{\overline}
\newcommand{\barO}{\bar{\Omega}}

\renewcommand{\Omega}{\varOmega}
\renewcommand{\Gamma}{\varGamma}
\renewcommand{\Psi}{\varPsi}
\renewcommand{\Pi}{\varPi}
\newcommand{\ecko}{\mathbf{e}}
\newcommand{\rcko}{\mathbf{r}}
\newcommand{\Ecko}{\mathbf{E}}
\newcommand{\Rcko}{\mathbf{R}}
\newcommand{\etacko}{{\boldsymbol\eta}}
\newcommand{\Xicko}{{\boldsymbol {\rm\Xi}}}
\newcommand{\DIM}{d}
\allowdisplaybreaks[4]
\captionsetup[subfigure]{labelfont=bf,textfont=normalfont}
\captionsetup[subtable]{labelfont=bf,textfont=normalfont}
\DeclareCaptionLabelFormat{tableandpicture}{ \tablename\, \& #1~#2}

\providecommand{\keywords}[1]{\textit{Keywords:} #1}
\providecommand{\msc}[1]{\textit{2010 MSC:} #1}
%%%%%%%%%%%%%%%%%%%%%%%%%%%%%%%%%

\title{Numerical analysis of the {Oseen}-type {Peterlin} viscoelastic model by the stabilized {Lagrange}--{Galerkin} method
\\
Part~II: A linear scheme}
\author{
{M\'{a}ria Luk\'{a}\v{c}ov\'{a}-Medvid'ov\'{a}}$^{1}$,
{Hana~Mizerov\'{a}}$^{1}$,\\
{Hirofumi~Notsu}$^{2,3}$
and
{Masahisa~Tabata}$^{4}$
\bigskip\\
\normalsize
$^1$ Institute of Mathematics, University of Mainz, Mainz 55099, Germany \\
\normalsize
$^2$ Faculty of Mathematics and Physics, Kanazawa University, Kanazawa 920-1192, Japan \\
\normalsize
$^3$ Japan Science and Technology Agency~(JST), PRESTO, Saitama 332-0012, Japan\\
\normalsize
$^4$ Department of Mathematics, Waseda University, Tokyo 169-8555, Japan
}
\date{}
\begin{document}
\maketitle
\begin{abstract}
This is the second part of our error analysis of the
stabilized {Lagrange}--{Galerkin} scheme applied to the {Oseen}-type {Peterlin} viscoelastic model.
Our scheme is a combination of the method of characteristics and {Brezzi}--{Pitk\" aranta}'s stabilization method
for the conforming linear elements, which leads to an efficient computation with a small number of degrees of freedom especially in three space dimensions.
In this paper, Part~II, we apply a semi-implicit time discretization which yields the linear scheme.
We concentrate on the diffusive viscoelastic model,
i.e.~in the constitutive equation for time evolution of the conformation tensor a diffusive effect is included.
Under mild stability conditions we obtain error estimates with the optimal convergence order for the velocity, pressure and conformation tensor in two and three space dimensions.
The theoretical convergence orders are confirmed by numerical experiments.
\smallskip\\
\keywords{Error estimates, The {Peterlin} viscoelastic model, {Lagrange}--{Galerkin} method, Pressure-stabilization}
\msc{65M12, 76A05, 65M60, 65M25}
\end{abstract}
%
%
%
%
%
%
%
%%%%%%%%%%%%%%%%%%%%%%%%%%%%%%%%%%%%%%%%%%%%%
\section{Introduction}\label{sec:intro}
%%%%%%%%%%%%%%%%%%%%%%%%%%%%%%%%%%%%%%%%%%%%%
The present paper is a continuation of numerical error analysis of the stabilized {Lagrange}--{Galerkin} method applied to
the {Oseen}-type {Peterlin} viscoelastic model. In our previous paper \cite{LMNT-Peterlin_Oseen_Part_I}, Part~I, we dealt with the fully nonlinear implicit scheme, whereas here, in Part~II, we investigate a linear semi-implicit scheme.
\par
The development of stable and convergent numerical methods for viscoelastic models, such as the {Oldroyd-B} type models, is an active research area.
In particular, the question of stability when elastic effects are dominant (the so-called high Weissenberg number problem) remains an open problem.
We refer the reader to works of {Fattal} and {Kupferman}~\cite{FatKup-2004,FatKup-2005}, where an interesting approach using the log-conformation representation has been introduced. Furthermore, in {Boyaval} et al.~\cite{BoyLelMan-2009} free energy dissipative {Lagrange}--{Galerkin} schemes with or without the log-conformation representation has been studied and in {Lee} and {Xu}~\cite{LeeXu-2006} and {Lee} et al.~\cite{LeeXuZha-2011} finite element schemes using the idea of the generalized {Lie} derivative have been proposed.
Further related numerical schemes and computations can be found, e.g., in~\cite{AboMatWeb-2002,AOP-2003,BonPicLas-2006,BPS-2001,CroKeu-1982,Keu-1986,MarCro-1987,NadSeq-2007,LNS-2015,OP-2002,OCP-2002,WapKeuLeg-2000,W-2013}, see also references therein.
To the best of our knowledge there are no results on error estimates of numerical schemes for the {Oldroyd-B} model, see
{Picasso} and {Rappaz}~\cite{PicRap-2001} and {Bonito} et al.~\cite{BonClePic-2007} for error analysis of simplified models without convective terms.

\par
In~\cite{Pet-1966} Peterlin proposed a mean-field closure model according to which the average of the elastic force over thermal fluctuations is replaced by the value of the force at the mean-squared polymer extension.
This means that a nonlinear spring force law~$F(R) = \gamma(|R|^2) R$  that acts in a dumbbell is replaced by the function~$F(R)=\gamma (\tr \Ccko) R$.
Here, $\gamma$ is the spring constant, $\Ccko$ is the so-called conformation tensor and $R$ is the vector connecting the beads of a dumbbell.
Based on this approach {Renardy} has recently derived a new class of general macroscopic constitutive models, that is motivated by Peterlin dumbbell theories with a nonlinear spring law for an infinitely extensible
spring, see Renardy~\cite{RenWan-2015, Ren-2010} and recent papers by {Luk\'{a}\v{c}ov\'{a}-Medvi\v{d}ov\'{a}} et al.~\cite{LukMizNec-2015, LMNR-2016}, where the global existence of weak solutions has been obtained.

In this paper, Part~II, as well as in our previous paper, Part~I, we consider the so-called Oseen-type Peterlin viscoelastic model
that is a system of the flow equations and an equation for the conformation tensor, cf.~\cite{Ren-2008, RenWan-2015, Ren-2010}.
We concentrate on the diffusive viscoelastic model, which means that in the constitutive equations for the conformation tensor a diffusive effect is included.

Let us point out that in standard derivations of bead-spring models the diffusive term in the equation for the elastic stress tensor is routinely omitted.
In \cite{DL-2009} a careful justification of the presence of the diffusive term in the Fokker-Planck equations through the asymptotic analysis is presented.
The diffusion coefficient~$\varepsilon$ is proportional to $(\ell/L)^2/We$, where $L$ and $\ell$ are characteristic macroscopic and microscopic length scales, respectively, and $We$ is the so-called {Weissenberg} number.
It is a reference number characterizing viscoelastic property of the material.
Estimates for $(\ell/L)^2$ presented in \cite{BAB-1991} show that $(\ell/L)^2$ is in the range of about $10^{-9}$ and $10^{-7}$.
As emphasized in \cite{BaSu-2012} the model reduction by neglecting this small diffusive effect is mathematically counterproductive leading to a degenerate parabolic-hyperbolic system~\eqref{model} with $\varepsilon = 0$.
On the other hand, when the diffusive term is taken into account, the resulting system~\eqref{model} remains parabolic. We would like to point out that in the analysis presented below we only require $\varepsilon > 0$ and there is no assumption on the size of $\varepsilon$.
For the details of the derivation of the diffusive {Peterlin} model we refer to~\cite{LukMizNec-2015,Miz-2015,RenWan-2015,Ren-2010}.
Let us mention that, even when the velocity field is given, the equation for the conformation tensor in the {Peterlin} model is still nonlinear, while the {Oldroyd-B} model is linear with respect to the extra stress tensor.
Hence, we can say that the nonlinearity of the {Peterlin} model is stronger than that of the {Oldroyd-B} model.
As a starting point of the numerical analysis of the {Peterlin} model, we consider the {Oseen}-type model, where the velocity of the material derivative is replaced by a known one, in order to concentrate on the treatment of nonlinear terms arising from the elastic stress.
\par
In the present paper a stabilized {Lagrange}--{Galerkin} method for the {Peterlin} viscoelastic model is studied.
It consists of the method of characteristics and {Brezzi}--{Pitk\"{a}ranta}'s stabilization method~\cite{BrePit-1984} for the conforming linear elements.  The method of characteristics derives the robustness in convection-dominated flow problems,
and the stabilization method reduces the number of degrees of freedom in computation especially in three space dimensions.
In our recent works by {Notsu} and {Tabata}~\cite{NT-2016-M2AN,NT-2015-JSC,NT-NCP} the stabilized {Lagrange}--{Galerkin} method has been applied successfully for the {Oseen}, {Navier}--{Stokes} and natural convection problems and optimal error estimates have been proved.
We extend the numerical analysis of the stabilized {Lagrange}--{Galerkin} method to the {Oseen}-type {Peterlin} model.
As already mentioned above, the aim of the present paper paper is to give a rigorous error analysis of the linear stabilized {Lagrange}--{Galerkin} scheme for the diffusive {Peterlin} model in both two and three space dimensions.
We show that under mild stability conditions the obtained error estimates have the optimal convergence rate.
\par
As mentioned in {Boyaval} et al.~\cite{BoyLelMan-2009}, the positive definiteness of the conformation tensor is important in the analysis of numerical schemes for the viscoelastic models, where this property has been shown for the exact strong solution in~\cite{Miz-2015}.
We remark that our error estimates have been obtained successfully without studying positive definiteness of the conformation tensor.
Let us additionally note that this paper includes the error estimate for the pressure in the standard $L^2$ norm (Theorem 2), which has, as far as we know, never been shown for time-dependent viscoelastic flow problems, e.g., the Oldroyd-B model.
\par
This paper is organized as follows.
In Section~\ref{sec:model} the mathematical model for the {Peterlin} viscoelastic fluid is described.
In Section~\ref{sec:scheme} a linear stabilized {Lagrange}--{Galerkin} scheme is presented.
The main results on the convergence with optimal error estimates are stated in Section~\ref{sec:main_results}, and proved in Section~\ref{sec:proofs}.
In Section~\ref{sec:numerics} some numerical experiments confirming the theoretical convergence orders are provided.
%
%
%
%
%
%
%%%%%%%%%%%%%%%%%%%%%%%%%%%%%%%%%%%%%%%%%%%%%
\section{The {Oseen}-type {Peterlin} viscoelastic model}\label{sec:model}
%%%%%%%%%%%%%%%%%%%%%%%%%%%%%%%%%%%%%%%%%%%%%
The function spaces and the notation to be used throughout the paper are as follows.
Let $\Omega$ be a bounded domain in $\mathbb{R}^\DIM$ for $\DIM =2$ or $3$, $\Gamma\defeq\pz\Omega$ the boundary of $\Omega$, and $T$ a positive constant.
For $m \in \mathbb{N}\cup \{0\}$ and $p\in [1,\infty]$ we use the {\rm Sobolev} spaces $W^{m,p}(\Omega)$, $W^{1,\infty}_0(\Omega)$, $H^m(\Omega) \, (=W^{m,2}(\Omega))$, $H^1_0(\Omega)$
and
$L^2_0(\Omega)\defeq\{q\in L^2(\Omega); \int_\Omega q~dx =0\}$.
Furthermore, we employ function spaces $H^m_{sym}(\Omega) \defeq \{\Dcko\in H^m (\Omega)^{\DIM \times \DIM};~\Dcko=\Dcko^T\}$ and $C^m_{sym}(\barO) \defeq C^m(\barO)^{\DIM \times \DIM}\cap H^m_{sym}(\Omega)$, where the superscript~$T$ stands for the transposition.
For any normed space~$S$ with norm~$\|\cdot\|_S$, we define function spaces $H^m(0,T; S)$ and $C([0,T]; S)$ consisting of $S$-valued functions in $H^m(0,T)$ and $C([0,T])$, respectively.
We use the same notation $(\cdot, \cdot)$ to represent the $L^2(\Omega)$ inner product for scalar-, vector- and matrix-valued functions.
The dual pairing between $S$ and the dual space $S^\prime$ is denoted by $\lA\cdot, \cdot\rA$.
The norms on $W^{m,p}(\Omega)$ and $H^m(\Omega)$ and their seminorms are simply denoted by $\|\cdot\|_{m,p}$ and $\|\cdot\|_m \, (= \|\cdot\|_{m,2})$ and by $|\cdot|_{m,p}$ and $|\cdot|_m \, (= |\cdot|_{m,2})$, respectively.
The notations~$\|\cdot\|_{m,p}$, $|\cdot|_{m,p}$, $\|\cdot\|_m$ and $|\cdot|_m$ are employed not only for scalar-valued functions but also for vector- and matrix-valued ones.
We also denote the norm on $H^{-1}(\Omega)^2$ by $\|\cdot\|_{-1}$.
For $t_0$ and $t_1\in\mathbb{R}$ we introduce the function space,
\begin{align*}
Z^m(t_0, t_1) & \defeq \bigl\{ \psi \in H^j(t_0, t_1; H^{m-j}(\Omega));~j=0,\ldots,m,\ \|\psi\|_{Z^m(t_0, t_1)} < \infty \bigr\}
\end{align*}
with the norm
\begin{align*}
\|\psi\|_{Z^m(t_0, t_1)} & \defeq \biggl\{ \sum_{j=0}^m \|\psi\|_{H^j(t_0,t_1; H^{m-j}(\Omega))}^2 \biggr\}^{1/2},
\end{align*}
and set $Z^m \defeq Z^m(0, T)$.
We often omit $[0,T]$, $\Omega$, and the superscripts~$\DIM$ and~$\DIM \times \DIM$ for the vector and the matrix if there is no confusion, e.g., we shall write $C(L^\infty)$ in place of $C([0,T]; L^\infty(\Omega)^{\DIM \times \DIM})$.
For square matrices $\mathbf{A}$ and $\mathbf{B} \in \mathbb{R}^{\DIM \times \DIM}$ we use the notation~$\mathbf{A}:\mathbf{B}
\defeq \tr ( \mathbf{A} \mathbf{B}^T )
= \sum_{i,j} A_{ij} B_{ij}$.
\par
We consider the system of equations describing the unsteady motion of an incompressible viscoelastic fluid,
\begin{subequations}\label{model}
\begin{align}
\Dt{\ucko} - \di \(2\nu\D{\ucko}\) + \nabla p & = \di [(\trC) \Ccko] + \fko & & \mbox{in}~\Omega \times (0,T),  \label{model_ucko}\\
\di \ucko &= 0 & & \mbox{in}~\Omega \times (0,T), \\
\Dt{\Ccko} - \varepsilon\Delta\Ccko =  (\nabla\ucko)\Ccko & + \Ccko(\nabla\ucko)^T - \(\trC\)^2 \Ccko   + (\trC) \Icko + \Fko & & \mbox{in}~\Omega \times (0,T),
\label{model_Ccko}\\
\ucko &= \mathbf{0}, \quad  \pd{\Ccko}{\nko} = \mathbf{0}, & & \mbox{on}~\Gamma \times (0,T),
\label{model_bc}\\
\ucko &=\ucko^0,\quad \Ccko =   \Ccko^0,
 & & \mbox{in}~\Omega,\ \mbox{at}\ t=0,
\label{model_ic}
\end{align}
\end{subequations}
where $\(\ucko, p, \Ccko\): \Omega\times (0,T) \rightarrow \mathbb{R}^\DIM \times \mathbb{R}\times \mathbb{R}^{\DIM \times \DIM}_{sym}$ are the unknown velocity, pressure and conformation tensor, $\nu \in (0, 1]$ is a fluid viscosity, $\varepsilon \in (0, 1]$ is an elastic stress viscosity,
$(\fko, \Fko): \Omega \times (0,T) \rightarrow \mathbb{R}^\DIM \times \mathbb{R}^{\DIM \times \DIM}_{sym}$ is a pair of given external forces,
$\D{\ucko} \defeq (1/2) [\nabla \ucko + (\nabla \ucko)^T]$ is the symmetric part of the velocity gradient,
$\Icko$~is the identity matrix,
$\nko:\Gamma\to\mathbb{R}^\DIM$~is the outward unit normal,
$(\ucko^0, \Ccko^0 ) : \Omega\to \mathbb{R}^\DIM \times \mathbb{R}^{\DIM \times \DIM}_{sym}$ is a pair of given initial functions,
and $\textnormal{D}/\textnormal{D}t$ is the material derivative defined by
\begin{align*}
\Dt{\ } \defeq \pd{\ }{t} + \wcko\cdot\nabla,
\end{align*}
where $\wcko:\Omega\times (0,T) \rightarrow \mathbb{R}^\DIM$ is a given velocity.
\begin{remark}
The model~\eqref{model} is  the {\rm Oseen} approximation to the fully nonlinear problem, where the material derivative terms,
\[
\frac{\partial \ucko}{\partial t}+(\ucko\cdot\nabla)\ucko,
\quad
\frac{\partial \Ccko}{\partial t}+(\ucko\cdot\nabla)\Ccko
\]
exist in place of $\Dt{\ucko}$ and $\Dt{\Ccko}$ in equations~\eqref{model_ucko} and~\eqref{model_Ccko}.
The existence of weak solutions and the uniqueness of regular solutions to the fully nonlinear model have been proved in {\rm Luk\'{a}\v{c}ov\'{a}-Medvid'ov\'{a}} et al.~\cite[Theorems~1 and~3]{LukMizNec-2015}.
The corresponding results are obtained under regularity condition on $\wcko$ to the model~\eqref{model}, which is simpler than the fully nonlinear model.
Numerical analysis of the fully nonlinear problem is a future work.
\end{remark}
\par
We set an assumption for the given velocity~$\wcko$.
\begin{hypo}\label{hyp:w}
The function $\wcko$ satisfies $\wcko \in C([0,T];W^{1,\infty}_0(\Omega)^\DIM).$
\end{hypo}
\par
Let $V \defeq H_0^1(\Omega)^\DIM$, $Q \defeq L_0^2(\Omega)$ and $W \defeq H^1_{sym}(\Omega)$.
We define the bilinear forms $a_u$ on $V \times V,$  $b$ on $V \times Q,$ $\mathcal{A}$ on $(V \times Q)\times(V \times Q)$ and $a_c$ on $W \times W$ by
\begin{align*}
a_u\(\ucko,\vcko\) & \defeq  2 \bigl( \D{\ucko}, \D{\vcko} \bigr), & b(\ucko,q) &\defeq - (\di \ucko, q),
& \mathcal{A}\bigl( (\ucko,p), (\vcko,q) \bigr) & \defeq \nu a_u\( \ucko,\vcko \) + b(\ucko,q) + b(\vcko,p), \\
a_c\(\Ccko,\Dcko\) & \defeq (\nabla\Ccko, \nabla\Dcko),
\end{align*}
respectively.
We present the weak formulation of the problem~\eqref{model};
find $(\ucko,p,\Ccko): (0,T) \rightarrow V \times Q \times W$ such that for $t \in (0,T)$
\begin{subequations}\label{weak_formulation}
\begin{align}
\biggl( \Dt{\ucko}(t),\vcko \biggr) &+ \mathcal{A}\bigl( (\ucko,p)(t), (\vcko,q) \bigr) = -  \(\trC(t)\,\Ccko(t),\nabla\vcko\) + \(\fko(t),\vcko\),
\label{weak_formulation_ucko}\\
\biggl( \Dt{\Ccko}(t), \Dcko \biggr) &+ \varepsilon a_c \bigl( \Ccko(t),\Dcko \bigr) = 2\bigl( (\nabla\ucko(t)) \Ccko(t), \Dcko \bigr) - \bigl( (\trC(t))^2 \Ccko(t),\Dcko \bigr) + \(\trC(t)\Icko,\Dcko\) + \(\Fko(t),\Dcko\),
\label{weak_formulation_Ccko} \\
&
\qquad\qquad\qquad\qquad\qquad\qquad\qquad\qquad\qquad\qquad\qquad\qquad\qquad\qquad
\forall (\vcko,q,\Dcko) \in V\times Q \times W, \notag
\end{align}
\end{subequations}
with $( \ucko(0),  \Ccko(0) ) =( \ucko^0 , \Ccko^0)$.
%
%
%
%
%
%
%
%
%
%%%%%%%%%%%%%%%%%%%%%%%%%%%%%%%%%%%%%%%%%%%%%
\section{A linear stabilized {Lagrange}--{Galerkin} scheme}\label{sec:scheme}
%%%%%%%%%%%%%%%%%%%%%%%%%%%%%%%%%%%%%%%%%%%%%
The aim of this section is to present a linear stabilized {Lagrange}--{Galerkin} scheme for the model~\eqref{model}.
\par
Let $\Delta t$ be a time increment, $N_T \defeq \lfloor T/\Delta t \rfloor$ the total number of time steps and $t^n \defeq n \Delta t$ for $n=0,\ldots,N_T$.
Let $\gcko$ be a function defined in $\Omega\times (0,T)$ and $\gcko^n \defeq \gcko(\cdot,t^n)$.
For the approximation of the material derivative we employ the first-order characteristics method,
\begin{align}\label{approx_matder}
\Dt{\gcko}(x,t^n) = \frac{\gcko^n(x) - \( \gcko^{n-1} \circ X_1^n\) (x)}{\Delta t} + O(\Delta t),
\end{align}
where $X_1^n:\Omega \to \mathbb{R}^\DIM$ is a mapping defined by
\[
X_1^n(x) \defeq x-\wcko^n(x)\Delta t,
\]
and the symbol~$\circ$ means the composition of functions,
\[
(\gcko^{n-1}\circ X_1^n)(x) \defeq \gcko^{n-1} ( X_1^n(x) ).
\]
For the details on deriving the approximation~\eqref{approx_matder} of $\textnormal{D} \gcko/ \textnormal{Dt},$ see, e.g.,~\cite{NT-2015-JSC}.
The point $X_1^n(x)$ is called the upwind point of~$x$ with respect to~$\wcko^n$.
The next proposition, which is a direct consequence of \cite{RuiTab-2002} and \cite{TabUch-2015-NS},
presents sufficient conditions to ensure that all upwind points defined by $X_1^n$ are in $\Omega$ and that its Jacobian~$J^n \defeq \det ( \pz X_1^n / \pz x )$ is around $1$.
\begin{prop}\label{prop:RT_TU}
Suppose Hypothesis~\ref{hyp:w} holds.
Then, we have the following for $n \in \{0,\ldots,N_T\}$.
\smallskip\\
(i)~Under the condition
\begin{align}
\Delta t |\wcko|_{C(W^{1,\infty})} < 1,
\label{cond:dt_w_bijective}
\end{align}
$X_1^n: \Omega \to \Omega$ is bijective.
\smallskip\\
(ii)~Furthermore, under the condition
\begin{align}
\Delta t |\wcko|_{C(W^{1,\infty})} \le 1/4,
\label{cond:dt_w_Jacobian}
\end{align}
the estimate $1/2 \le J^n \le 3/2$ holds.
\end{prop}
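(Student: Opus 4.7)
The plan is to treat $X_1^n = \mathrm{id} - \Delta t\,\wcko^n$ as a controlled perturbation of the identity, with smallness measured by $\Delta t |\wcko|_{C(W^{1,\infty})}$, and to exploit $\wcko \in C([0,T];W^{1,\infty}_0(\Omega)^\DIM)$ (in particular, $\wcko^n = 0$ on $\Gamma$) to get a clean boundary behavior. The proof splits into three pieces: injectivity, the inclusion $X_1^n(\Omega) \subset \Omega$, and surjectivity for (i), followed by a determinant expansion for (ii).

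For injectivity, $\wcko^n$ is globally $|\wcko|_{C(W^{1,\infty})}$-Lipschitz, so $X_1^n(x)=X_1^n(y)$ yields $|x-y|\le \Delta t |\wcko|_{C(W^{1,\infty})}|x-y|$, and the strict inequality in \eqref{cond:dt_w_bijective} forces $x=y$. For the inclusion, the vanishing of $\wcko^n$ on $\Gamma$ combined with the Lipschitz bound gives, for any $x\in\Omega$ and any closest boundary point $x^\ast\in\Gamma$,
\[
|\wcko^n(x)| = |\wcko^n(x)-\wcko^n(x^\ast)| \le |\wcko|_{C(W^{1,\infty})}\,\mathrm{dist}(x,\Gamma),
\]
hence $|X_1^n(x)-x|\le \Delta t |\wcko|_{C(W^{1,\infty})}\,\mathrm{dist}(x,\Gamma) < \mathrm{dist}(x,\Gamma)$, so $X_1^n(x)$ stays in the open ball of radius $\mathrm{dist}(x,\Gamma)$ around $x$, which is contained in $\Omega$.

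Surjectivity onto $\Omega$ is the only nontrivial topological step. I would extend $X_1^n$ continuously to $\barO$; since $\wcko^n|_\Gamma = 0$ the extension is the identity on $\Gamma$. Because the map is injective (by the previous paragraph) and its differential $I-\Delta t\,\nabla\wcko^n$ is invertible (as follows \emph{a posteriori} from (ii), or, independently at this stage, from the Lipschitz perturbation bound), invariance of domain shows $X_1^n(\Omega)$ is open in $\Omega$; a Brouwer-degree argument using that $X_1^n$ equals the identity on $\Gamma$ then forces $X_1^n(\Omega)=\Omega$, completing~(i).

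For (ii), I would expand
\[
J^n = \det\!\bigl( I - \Delta t\,\nabla\wcko^n \bigr) = \sum_{k=0}^{\DIM} (-\Delta t)^k\, e_k(\nabla\wcko^n),
\]
where $e_k$ is the $k$-th elementary symmetric invariant of the matrix $\nabla\wcko^n$. Each $|e_k|$ is bounded by a combinatorial constant depending only on $\DIM$ times $|\wcko|_{C(W^{1,\infty})}^{k}$, so under \eqref{cond:dt_w_Jacobian} the remainder $|J^n-1|$ is majorized by a sum of powers of $1/4$. The main obstacle is that the naive bound $(1+\Delta t |\wcko|_{C(W^{1,\infty})})^\DIM$ is not sharp enough in three dimensions; one needs to track the elementary symmetric polynomials term by term, exactly as in \cite{RuiTab-2002,TabUch-2015-NS}, to obtain $1/2 \le J^n \le 3/2$ for both $\DIM=2$ and $\DIM=3$ under the single condition $\Delta t |\wcko|_{C(W^{1,\infty})}\le 1/4$.
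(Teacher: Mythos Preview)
The paper does not prove this proposition; it simply records it as ``a direct consequence of \cite{RuiTab-2002} and \cite{TabUch-2015-NS}''. Your sketch is a faithful reconstruction of the arguments in those references and is correct. Two minor remarks. First, for surjectivity you do not need invertibility of the differential: invariance of domain requires only continuity and injectivity, and in fact the Brouwer-degree homotopy $H_t(x)=x-t\,\Delta t\,\wcko^n(x)$, which fixes $\Gamma$ pointwise for every $t\in[0,1]$, already yields $\deg(X_1^n,\Omega,y)=\deg(\mathrm{id},\Omega,y)=1$ for every $y\in\Omega$, hence surjectivity, without appealing to invariance of domain at all. Second, your caution in~(ii) is well placed: whether $|J^n-1|\le 1/2$ follows from $\Delta t\,|\wcko|_{C(W^{1,\infty})}\le 1/4$ depends on the precise definition of the $W^{1,\infty}$ seminorm for vector fields, and the cited references fix a convention under which the termwise bound on the elementary symmetric functions of $\nabla\wcko^n$ closes exactly as you indicate.
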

\par
For the sake of simplicity we suppose that $\Omega$ is a polygonal domain.
Let $\trian=\{K\}$ be a triangulation of $\bar{\Omega} \ (= \bigcup_{K\in\mathcal{T}_h} K )$, $h_K$
the
diameter of $K \in \trian$ and $h \defeq \max_{K\in\trian}h_K$ the maximum element size.
We consider a regular family of subdivisions $\{\trian\}_{h\downarrow 0}$ satisfying the inverse assumption~\cite{Cia-1978}, i.e., there exists a positive constant $\alpha_0$ independent of $h$ such that
\begin{align*}
\frac{h}{h_K} \leq \alpha_0, \quad \forall K \in \trian, \ \forall h.
\end{align*}
We define the discrete function spaces $X_h$, $M_h$, $W_h$, $V_h$ and $Q_h$ by
\begin{align*}
X_h &\defeq  \left\{ \vcko_h \in C(\bar{\Omega})^\DIM ; \ \vcko_{h|K} \in P_1(K)^\DIM, \forall K \in \trian \right\}, 
&
M_h & \defeq \left\{ q_h \in C(\bar{\Omega}) ; \ q_{h|K} \in P_1(K), \forall K \in \trian \right\}, 
\\
W_h &\defeq  \left\{ \Dcko_h \in C_{sym}(\bar{\Omega}); \ \Dcko_{h|K} \in P_1(K)^{\DIM \times \DIM}, \forall K \in \trian \right\},
&
V_h &\defeq X_h \cap V, \qquad Q_h \defeq M_h \cap Q,
\end{align*}
respectively, where $P_1(K)$ is the polynomial space of linear functions on~$K\in\trian$.
\par
Let $\delta_0$ be a small positive constant fixed arbitrarily and $(\cdot, \cdot)_K $ the $L^2(K)^\DIM$ inner product.
We define the bilinear forms $\mathcal{A}_h$ on $(V \times H^1(\Omega)) \times (V \times H^1(\Omega))$ and $\mathcal{S}_h$ on $H^1(\Omega) \times H^1(\Omega)$ by
\begin{align*}
\mathcal{A}_h\( (\ucko,p),(\vcko,q) \) &\defeq \nu a_u\( \ucko,\vcko \) + b(\ucko,q) + b(\vcko,p) - \mathcal{S}_h(p,q), &
\mathcal{S}_h(p,q) & \defeq \delta_0\sum_{K\in\trian}h_K^2(\nabla p,\nabla q)_K.
\end{align*}
\par
Let $(\fko_h, \Fko_h) \defeq (\{ \fko_h^n\}_{n=1}^{N_T},$ $\{ \Fko_h^n\}_{n=1}^{N_T} ) \subset L^2(\Omega)^\DIM \times L^2(\Omega)^{\DIM \times \DIM}$  and~$(\ucko_h^0,\Ccko_h^0) \in V_h \times W_h$ be given.
A linear stabilized {\rm {Lagrange}--{Galerkin}} scheme for~\eqref{model} is to find $(\ucko_h, p_h, \Ccko_h) \defeq \{(\au,\ap,\aC)\}_{n=1}^{N_T}$ $\subset V_h \times Q_h \times W_h$ such that, for $n=1,\ldots ,N_T$,
\begin{subequations}\label{lin_scheme}
\begin{align}
\biggl( \frac{\au-\aou\circ X_1^n}{\Delta t}, \vcko_h \biggr) + \mathcal{A}_h \bigl( ( \au,\ap ), (\vcko_h,q_h) \bigr)
& = - \bigl( (\atrC)\aoC,\nabla \vcko_h \bigr) + (\af,\vcko_h),
\label{lin_scheme_velocity_pressure}\\
\biggl( \frac{\aC-\aoC \circ X_1^n}{\Delta t}, \Dcko_h \biggr) + \varepsilon a_{c}(\aC,\Dcko_h)
& = 2\bigl( (\nabla\au)\aoC, \Dcko_h \bigr) - \bigl( (\aotrC)^2 \aC, \Dcko_h \bigr) 
\notag\\
& \qquad\qquad\qquad + \bigl( (\aotrC)\Icko,\Dcko_h \bigr) + (\aF,\Dcko_h),
\label{lin_scheme_tensor} \\
& \qquad\qquad\qquad\qquad
\forall (\vcko_h,q_h,\Dcko_h) \in V_h\times Q_h \times W_h. \notag
\end{align}
\end{subequations}
%
%
%
%
%
%
%
%
%%%%%%%%%%%%%%%%%%%%%%%%%%%%%%%%%%%%%%%%%%%%%
\section{The main result}\label{sec:main_results}
%%%%%%%%%%%%%%%%%%%%%%%%%%%%%%%%%%%%%%%%%%%%%
In this section we state the main result on error estimates with the optimal convergence order of scheme~\eqref{lin_scheme},
which is proved in the next section.
\par
We use $c$, $c_w$, $c_s$, $c_{w,s}$, $c_\nu$, $c_\varepsilon$ and $c_{\nu,\varepsilon}$ to represent generic positive constants independent of the discretization parameters $h$ and $\Delta t$, the subscripts imply the dependency of the constants, and the subscripts {``~$w$~''} and {``~$s$~''} in $c_w$, $c_s$ and $c_{w,s}$ mean the given velocity~$\wcko$ and the solution $(\ucko, p, \Ccko)$ of~\eqref{weak_formulation}, respectively.
For instance, the constant~$c_{w,s}$ is dependent on $\wcko$ and $(\ucko, p, \Ccko)$ and independent of $\nu$ and $\varepsilon$, and the constant~$c$ has no dependency on $\wcko$, $(\ucko, p, \Ccko)$, $\nu$ nor $\varepsilon$.
The symbol ``$\prime$ (prime)'' is sometimes used in order to distinguish two constants, e.g.,~$c_s$ and~$c_s^\prime$, from each other.
\par
We use the following notation for the norms and seminorms, $\n{V}{\cdot} = \n{V_h}{\cdot} \defeq \n{1}{\cdot}$, $\n{Q}{\cdot} = \n{Q_h}{\cdot} \defeq \n{0}{\cdot}$,
\begin{align*}
& \n{Z^2(t_0,t_1)}{(\ucko,\Ccko)} \defeq \Bigl\{ \n{Z^2(t_0,t_1)}{\ucko}^2 + \n{Z^2(t_0,t_1)}{\Ccko}^2 \Bigr\}^{1/2}, \\
& \n{\ell^{\infty}(X)}{\ucko} \defeq \max_{n=0,\ldots,N_T} \n{X}{\ucko^n},
\qquad 
\|\ucko\|_{\ell^2_m(X)} \defeq \biggl\{ \Delta t\sum_{n=1}^{m} \|\ucko^n\|_X^2 \biggr\}^{1/2},
\qquad
\n{\ell^2(X)}{\ucko} \defeq \|\ucko\|_{\ell^2_{N_T}(X)}, \\
&
|p|_h \defeq \biggl\{ \sum_{K \in \trian} h_K^2 ( \nabla p, \nabla p )_K \biggr\}^{1/2},
\qquad
|p|_{\ell^2_m(|.|_h)} \defeq \biggl\{ \Delta t \sum_{n=1}^m |p^n|_h^2 \biggr\}^{1/2},
\qquad
|p|_{\ell^2(|.|_h)} \defeq |p|_{\ell^2_{N_T}(|.|_h)},
\end{align*}
for $m\in\{1,\cdots,N_T\}$ and $X=L^\infty(\Omega)$, $L^2(\Omega)$ and~$H^1(\Omega)$.
$\ol{D}_{\Delta t}$ is the backward difference operator defined by $\ol{D}_{\Delta t} \ucko^n \defeq (\ucko^n - \ucko^{n-1})/\Delta t$.
\par
The existence and uniqueness of the solution of scheme~\eqref{lin_scheme} are ensured by the following proposition, which is also proved in the next section.
%%%%%%%%%%%%%%%%%%%%%%%%%%%%%
\begin{prop}[existence and uniqueness]\label{prop:existence_uniqueness}
Suppose Hypothesis~\ref{hyp:w} holds.
Then, for any~$h$ and~$\Delta t$ satisfying~\eqref{cond:dt_w_bijective} there exists a unique solution $(\ucko_h, p_h, \Ccko_h) \subset V_h \times Q_h \times W_h$ of scheme~\eqref{lin_scheme}.
\end{prop}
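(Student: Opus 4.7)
The plan is to reduce the statement to a uniqueness claim by exploiting that, at each time step~$n$, the scheme~\eqref{lin_scheme} is a square linear system on the finite-dimensional space $V_h \times Q_h \times W_h$: once $(\aou,\aop,\aoC,\af,\aF)$ are treated as known data, the number of unknowns equals the number of test equations, so existence and uniqueness are equivalent. It therefore suffices to show that the homogeneous version of~\eqref{lin_scheme} admits only the trivial solution.

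First, I would take two solutions with the same data and let $(\delta\ucko,\delta p,\delta\Ccko)\in V_h\times Q_h\times W_h$ denote their difference. All external data and upwinded previous-step contributions drop out, but $\aoC$ remains as a known coefficient in the two coupling contributions $(\delta\,\mathrm{tr}\,\Ccko)\aoC$ in~\eqref{lin_scheme_velocity_pressure} and $(\nabla\delta\ucko)\aoC$ in~\eqref{lin_scheme_tensor}. The key structural choice is to test the velocity-pressure identity with $(\vcko_h,q_h)=(\delta\ucko,-\delta p)$ and the tensor identity with $\Dcko_h=\delta\Ccko$. The sign flip $q_h=-\delta p$ is essential: it makes the two $b(\delta\ucko,\cdot)$ contributions in $\mathcal{A}_h$ cancel and turns $-\mathcal{S}_h(\delta p,-\delta p)$ into the non-negative $+\mathcal{S}_h(\delta p,\delta p)$. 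Summing the two identities puts on the left a sum of non-negative terms,
\[
\tfrac{1}{\Delta t}\bigl(\|\delta\ucko\|_0^2+\|\delta\Ccko\|_0^2\bigr) + 2\nu\|\D{\delta\ucko}\|_0^2 + \varepsilon\|\nabla\delta\Ccko\|_0^2 + \mathcal{S}_h(\delta p,\delta p) + \|(\aotrC)\delta\Ccko\|_0^2,
\]
balanced against two coupling integrals of the form $\int\aoC\,\delta\Ccko\,\nabla\delta\ucko$.

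Next, I would estimate the right-hand side by H\"older's inequality (using $\aoC\in W_h\subset L^\infty(\Omega)^{\DIM\times\DIM}$ since $W_h$ consists of continuous piecewise-linear tensors on a polygonal domain) and then apply Young's inequality combined with Korn's inequality on $V_h$: a fraction of $2\nu\|\D{\delta\ucko}\|_0^2$ absorbs the $\|\nabla\delta\ucko\|_0^2$ factor, leaving on the right a multiple of $(\|\aoC\|_{L^\infty}^2/\nu)\|\delta\Ccko\|_0^2$. Absorbing this in turn into $\tfrac{1}{\Delta t}\|\delta\Ccko\|_0^2$ on the left forces $\delta\Ccko=\mathbf{0}$; going back to the two identities, this then gives $\delta\ucko=\mathbf{0}$ and, using that $\mathcal{S}_h(\cdot,\cdot)$ is a norm on $Q_h$, finally $\delta p=0$.

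The main obstacle I anticipate is the last absorption step, which formally couples the positivity of the estimate to the magnitude of $\|\aoC\|_{L^\infty}$. I expect the paper to handle this either by an inductive argument (an a~priori bound on the previous-step $\aoC$ furnished by the stability analysis subsequently carried out in Section~\ref{sec:proofs}, which then makes~\eqref{cond:dt_w_bijective} sufficient) or, more directly, by eliminating $\delta\Ccko$ through a Schur complement against the unconditionally SPD tensor block $\tfrac{1}{\Delta t}(\cdot,\cdot)+\varepsilon a_c(\cdot,\cdot)+((\aotrC)^2\,\cdot\,,\,\cdot\,)$, reducing the problem to a modified Brezzi--Pitk\"aranta-stabilized saddle-point system in $(\delta\ucko,\delta p)$ whose solvability under~\eqref{cond:dt_w_bijective} follows from the standard theory with the coupling treated as a perturbation.
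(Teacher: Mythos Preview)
Your reduction to a square linear system and the test-function choice $(\vcko_h,q_h)=(\delta\ucko,-\delta p)$ for the velocity--pressure block are exactly right. The gap is in the tensor part. Testing~\eqref{lin_scheme_tensor} with $\Dcko_h=\delta\Ccko$ leaves the two coupling integrals $-\bigl((\tr\delta\Ccko)\aoC,\nabla\delta\ucko\bigr)$ and $2\bigl((\nabla\delta\ucko)\aoC,\delta\Ccko\bigr)$ on the right, and these do \emph{not} cancel. Your absorption step then requires $\Delta t\lesssim \nu/\|\aoC\|_{0,\infty}^2$, which is a genuine extra restriction not implied by~\eqref{cond:dt_w_bijective}; neither of your proposed fixes removes it. The inductive bound on $\|\aoC\|_{0,\infty}$ from Section~\ref{sec:proofs} would still leave a $\nu$- and $M_0$-dependent smallness condition on $\Delta t$, and the Schur-complement route gives the same bound: solving the tensor block for $\delta\Ccko$ yields $\|\delta\Ccko\|_0\le c\,\Delta t\,\|\aoC\|_{0,\infty}\|\nabla\delta\ucko\|_0$, and substituting back reproduces the very condition you were trying to avoid.

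The paper sidesteps this entirely by a different test function for the tensor equation: instead of $\delta\Ccko$, take $\Dcko_h=\tfrac{1}{2}(\tr\delta\Ccko)\,\Icko$. Then the coupling term becomes $\bigl(\tr[(\nabla\delta\ucko)\aoC],\,\tr\delta\Ccko\bigr)$, and the algebraic identity
\[
\bigl((\tr\delta\Ccko)\aoC,\nabla\delta\ucko\bigr)-\bigl(\tr[(\nabla\delta\ucko)\aoC],\,\tr\delta\Ccko\bigr)=0
\]
makes the two coupling contributions cancel \emph{exactly}, with no absorption and no condition on $\Delta t$ or on $\|\aoC\|_{0,\infty}$. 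The resulting sum of non-negative terms forces $\delta\ucko=\mathbf{0}$, $\delta p=0$, and $\tr\delta\Ccko=0$. Only then does one go back to~\eqref{lin_scheme_tensor} and test with $\Dcko_h=\delta\Ccko$; since $\delta\ucko=\mathbf{0}$ is already known, the coupling term vanishes and one reads off $\delta\Ccko=\mathbf{0}$ directly. This two-step choice of test functions is the missing idea.
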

%%%%%%%%%%%%%%%%%%%%%%%%%%%%%
%
%
\par
We state the main results after preparing a projection and a hypothesis.
\begin{defin}[{Stokes}--{Poisson} projection]
For $(\ucko, p, \Ccko) \in V \times Q \times W $ we define the {\rm {Stokes}--{Poisson}} projection $(\SPu, \SPp, \SPC) \in V_h \times Q_h \times W_h$ of $(\ucko, p, \Ccko)$ by
\begin{align}
\mathcal{A}_h\((\SPu,\SPp),(\vcko_h, q_h)\) + a_c (\SPC, \Dcko_h) + (\SPC, \Dcko_h)
= \mathcal{A}\((\ucko,p),(\vcko_h, q_h)\) + a_c (\Ccko, \Dcko_h) + (\Ccko, \Dcko_h),
\notag \\
\forall (\vcko_h, q_h, \Dcko_h) \in V_h \times Q_h \times W_h.
\label{StokesPoisson_projection}
\end{align}
\end{defin}
\noindent
The {\rm {Stokes}--{Poisson}} projection derives an operator
$\Pi_h^{\rm SP}: V \times Q \times W \to V_h\times Q_h \times W_h$ defined by $\Pi_h^{\rm SP} (\ucko, p, \Ccko) := (\hat{\ucko}_h, \hat{p}_h, \hat{\Ccko}_h)$.
We denote the $i$-th component of $\Pi_h^{\rm SP} (\ucko, p, \Ccko)$ by $[\Pi_h^{\rm SP} (\ucko, p, \Ccko)]_i$ for $i=1,2,3$ and the pair of the first and third components~$(\hat{\ucko}_h, \hat{\Ccko}_h)=([\Pi_h^{\rm SP} (\ucko, p, \Ccko)]_1, [\Pi_h^{\rm SP} (\ucko, p, \Ccko)]_3)$ by $[\Pi_h^{\rm SP} (\ucko, p, \Ccko)]_{1,3}$ simply.
\begin{remark}
The identity~\eqref{StokesPoisson_projection} can be decoupled into the {\rm Stokes} projection and the {\rm Poisson} projection.
For the simplicity of the notation we use~\eqref{StokesPoisson_projection} in the sequel.
Since the Neumann boundary condition~\eqref{model_bc} is imposed on~$\Ccko$, we use the {\rm Poisson} projection corresponding to the operator~$-\Delta + I$ for the unique solvability.
\end{remark}
\begin{hypo}\label{hyp:regularity}
The solution $\(\ucko, p, \Ccko \) $ of~\eqref{weak_formulation} satisfies
$\ucko \in Z^2(0,T)^\DIM \cap H^1(0,T; V \cap H^2(\Omega)^\DIM)
\cap C([0,T]; W^{1,\infty}(\Omega)^\DIM)$,
$p \in H^1(0,T; Q \cap H^1(\Omega))$ and 
$\Ccko \in Z^2(0,T)^{\DIM \times \DIM} \cap H^1(0,T;W\cap H^2(\Omega)^{\DIM \times \DIM})$.
\end{hypo}
\begin{remark}
Let us note that we assume a higher regularity of the exact solution than that of the weak solution.
Such regularity is usually assumed in discussing the convergence rate of numerical solutions of partial differential equations.
We remark that our recent theoretical result~\cite{LukMizNec-2015} shows that both velocity and conformation tensor belong to $L^{\infty}(H^2)$ for the fully nonlinear {\rm Peterlin} model with $\varepsilon>0$.
The result holds also for the {\rm Oseen}-type {\rm Peterlin} model with $\varepsilon>0$.
\end{remark}
We now impose the conditions
\begin{align}
(\ucko_h^0,\Ccko_h^0) = [\Pi_h^{\rm SP} (\ucko^0, 0, \Ccko^0)]_{1,3}, \quad (\fko_h, \Fko_h)=(\fko,\Fko).
\label{cond:if}
\end{align}
\begin{remark}
For the choice of~$(\ucko_h^0,\Ccko_h^0)$ we employ the {\rm Stokes--Poisson} projection of $(\ucko^0, 0, \Ccko^0)$ by~\eqref{StokesPoisson_projection} in~\eqref{cond:if}, since the initial condition for the pressure is not given in~\eqref{model}.
This choice does not lose any convergence order in our results below.
\end{remark}
%
%
%
%%%%%%%%%%%%%%%%%%%%%%%%%%%%%
\begin{theo}[error estimates~I]\label{thm:error_estimates}
Suppose Hypotheses~\ref{hyp:w} and~\ref{hyp:regularity} hold.
Then, there exist positive constants $h_0$, $c_0$ and $c_\dagger$ such that, for any pair~$(h, \Delta t)$ satisfying
\begin{align}
h\in (0,h_0],\quad
\Delta t\le
\left\{
\begin{aligned}
& c_0 (1+|\log h|)^{-1/2} && (d=2),\\
& c_0 h^{1/2} && (d=3),
\end{aligned}
\right.
\label{condition:h_dt}
\end{align}
the solution $(\ucko_h, p_h, \Ccko_h)$ of scheme~\eqref{lin_scheme} with~\eqref{cond:if} is estimated as follows.
\begin{align}
\|\Ccko_h\|_{\ell^\infty(L^\infty)} & \le \|\Ccko\|_{C(L^\infty)}+1,
\label{ieq:stability} \\
\|\ucko_h-\ucko\|_{\ell^\infty(L^2)},\ 
\|\ucko_h-\ucko\|_{\ell^2(H^1)},\ 
|p_h-p|_{\ell^2(|\cdot|_h)}, &\ 
\|\Ccko_h-\Ccko\|_{\ell^\infty(H^1)},\ 
\Bigl\|\ol{D}_{\Delta t}\Ccko_h-\prz{\Ccko}{t}\Bigr\|_{\ell^2(L^2)} 
\le c_\dagger (\Delta t + h).
\label{ieq:main_results}
\end{align}
\end{theo}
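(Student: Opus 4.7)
The plan is to prove the two assertions of Theorem~\ref{thm:error_estimates} simultaneously by induction on the time index~$m$, using the Stokes--Poisson projection to split each error into a projection part and a purely discrete part. For every $n$ I write
\[
\au - \ucko^n = \ecko_u^n - \etacko^n, \qquad \ap - p^n = \ecko_p^n - \eta_p^n, \qquad \aC - \Ccko^n = \Ecko_C^n - \Xicko^n,
\]
where $(\SPu^n,\SPp^n,\SPC^n) \defeq \Pi_h^{\rm SP}(\ucko^n,p^n,\Ccko^n)$, $\ecko_u^n \defeq \au - \SPu^n$, $\etacko^n \defeq \SPu^n - \ucko^n$, and analogously for the other components. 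The projection errors $\etacko^n$, $\eta_p^n$, $\Xicko^n$ are of order~$h$ in the appropriate energy norms and of order~$h^2$ in $L^2$ by standard analysis of the Brezzi--Pitk\"aranta stabilized Stokes problem and of the Poisson problem with Neumann data, using Hypothesis~\ref{hyp:regularity}. All remaining work is therefore to bound the discrete components $\ecko_u^n$, $\ecko_p^n$ and $\Ecko_C^n$.

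Subtracting the weak formulation~\eqref{weak_formulation} at $t=t^n$ from scheme~\eqref{lin_scheme} and invoking~\eqref{StokesPoisson_projection} produces error equations whose right-hand sides collect three kinds of residuals: (i)~the characteristic truncation errors $\Rcko_u^n \defeq (\ucko^n-\ucko^{n-1}\circ X_1^n)/\Delta t - \Dt{\ucko}(t^n)$ and its tensorial analogue $\Rcko_C^n$, which under Proposition~\ref{prop:RT_TU} and Hypothesis~\ref{hyp:regularity} satisfy $\|\Rcko_u\|_{\ell^2(L^2)} + \|\Rcko_C\|_{\ell^2(L^2)} \le c_{w,s}\Delta t$; (ii)~projection differences composed with~$X_1^n$, contributing $O(h)$ in the appropriate norm; (iii)~linearization defects arising from replacing the products $(\trC^n)\Ccko^n$, $2(\nabla\ucko^n)\Ccko^n$ and $(\trC^n)^2\Ccko^n$ in~\eqref{weak_formulation_ucko}--\eqref{weak_formulation_Ccko} by the partly lagged products $(\atrC)\aoC$, $2(\nabla\au)\aoC$ and $(\aotrC)^2\aC$ in~\eqref{lin_scheme}. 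The group~(iii) is split into products of discrete errors at step~$n$ or $n-1$ with factors that are uniformly bounded \emph{provided} the induction hypothesis $\|\aoC\|_{L^\infty} \le \|\Ccko\|_{C(L^\infty)}+1$ is available at the previous step, plus time shifts of $\Ccko$ itself controlled by Hypothesis~\ref{hyp:regularity}.

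The core estimate is obtained by testing the velocity/pressure error equation with $(\ecko_u^n,\ecko_p^n)$ and the tensor error equation with $\Ecko_C^n$, and summing over $n=1,\dots,m$. The characteristic identity
\[
\bigl( \vcko_h - \vcko_h\circ X_1^n, \vcko_h \bigr) \ge \tfrac{1}{2}\bigl( \|\vcko_h\|_0^2 - \|\vcko_h\circ X_1^n\|_0^2 \bigr),
\]
which follows from Proposition~\ref{prop:RT_TU}, combined with coercivity of $\mathcal{A}_h$ on $V_h\times Q_h$ (supplemented by the pressure stabilization $\mathcal{S}_h$) and of $\varepsilon a_c + (\cdot,\cdot)$ on $W_h$, allows the induction hypothesis to absorb the quadratic residuals. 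A discrete Gronwall step then yields
\[
\|\ecko_u\|_{\ell^\infty_m(L^2)}^2 + \|\Ecko_C\|_{\ell^\infty_m(H^1)}^2 + \Delta t \sum_{n=1}^m \Bigl( \nu|\ecko_u^n|_1^2 + |\ecko_p^n|_h^2 + \varepsilon|\Ecko_C^n|_1^2 \Bigr) \le c_\dagger^2(\Delta t + h)^2.
\]
Together with the projection estimates this delivers four of the five bounds in~\eqref{ieq:main_results}; the fifth, for $\ol{D}_{\Delta t}\Ccko_h - \partial_t\Ccko$ in $\ell^2(L^2)$, is extracted from~\eqref{lin_scheme_tensor} by moving everything except $\bar{D}_{\Delta t}\aC$ to the right-hand side (the shift term $(\aoC\circ X_1^n - \aoC)/\Delta t$ is controlled by $c_w|\aoC|_1$) and invoking the already-proven bounds.

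The principal obstacle is closing the induction: the $L^\infty$ bound on~$\aC$ is what permits control of $(\aotrC)^2\aC$ and $(\nabla\au)\aoC$, yet it can only be recovered \emph{a posteriori} from the energy estimate through an inverse inequality. Writing $\|\aC\|_{L^\infty} \le \|\Ccko^m\|_{L^\infty} + \|\Xicko^m\|_{L^\infty} + \|\Ecko_C^m\|_{L^\infty}$, using $\|\Xicko^m\|_{L^\infty} \le c_s h$, and applying the $P_1$-inverse inequality
\[
\|\Ecko_C^m\|_{L^\infty} \le
\begin{cases}
c(1+|\log h|)^{1/2}\,\|\Ecko_C^m\|_1 & (d=2), \\
c\,h^{-1/2}\,\|\Ecko_C^m\|_1 & (d=3),
\end{cases}
\]
the $\ell^\infty(H^1)$ bound $\|\Ecko_C^m\|_1 \le c_\dagger(\Delta t + h)$ forces $\|\aC\|_{L^\infty} \le \|\Ccko\|_{C(L^\infty)} + 1$ precisely under condition~\eqref{condition:h_dt}, which balances $(\Delta t + h)$ against the inverse-inequality constant in each dimension. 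This advances the induction to step~$m+1$ and simultaneously yields the stability~\eqref{ieq:stability} and the optimal rates~\eqref{ieq:main_results}.
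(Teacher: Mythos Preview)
Your overall strategy---splitting through the Stokes--Poisson projection, writing down error equations, closing an induction on $\|\Ccko_h^{n-1}\|_{0,\infty}$ via an inverse inequality---matches the paper. But there is a genuine gap in the energy step.

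Testing the tensor error equation with $\Ecko_C^n$ alone produces
\[
\ol{D}_{\Delta t}\Bigl(\tfrac{1}{2}\|\Ecko_C^n\|_0^2\Bigr) + \varepsilon\,|\Ecko_C^n|_1^2 \le \ldots,
\]
which after summing and Gronwall yields control on $\|\Ecko_C\|_{\ell^\infty(L^2)}$ and $\sqrt{\varepsilon}\,\|\Ecko_C\|_{\ell^2(H^1)}$, \emph{not} on $\|\Ecko_C\|_{\ell^\infty(H^1)}$ as you claim. Yet the induction closure and the stated bound $\|\Ccko_h-\Ccko\|_{\ell^\infty(H^1)}\le c_\dagger(\Delta t+h)$ both require $\|\Ecko_C^m\|_1$ at \emph{every} fixed $m$: the inverse inequality you invoke reads $\|\Ecko_C^m\|_{0,\infty}\le \alpha_{21}D(h)\|\Ecko_C^m\|_1$ with $D(h)$ as in~\eqref{Dofh}, and only this $H^1$ version is weak enough to be balanced by condition~\eqref{condition:h_dt}. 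If you tried instead to use the $L^2$-based inverse inequality $\|\Ecko_C^m\|_{0,\infty}\le ch^{-d/2}\|\Ecko_C^m\|_0$, you would need $\Delta t\le ch$ ($d=2$) or $\Delta t\le ch^{3/2}$ ($d=3$), strictly stronger than~\eqref{condition:h_dt}.

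The missing ingredient is a \emph{second} test of the tensor error equation, with $\ol{D}_{\Delta t}\Ecko_C^n$. This gives $\varepsilon\,a_c(\Ecko_C^n,\ol{D}_{\Delta t}\Ecko_C^n)\ge \ol{D}_{\Delta t}\bigl(\tfrac{\varepsilon}{2}|\Ecko_C^n|_1^2\bigr)$ on the left, hence pointwise-in-$n$ control of $|\Ecko_C^n|_1$, and at the same time produces $\|\ol{D}_{\Delta t}\Ecko_C^n\|_0^2$ directly---so the fifth estimate of~\eqref{ieq:main_results} comes out of the same computation rather than from a separate extraction step. The paper combines the two tests (with $\Ecko_C^n$ and with $\ol{D}_{\Delta t}\Ecko_C^n$) with suitable weights into a single inequality of the form~\eqref{ieq:eh_epsh_Gronwall_H1} in Proposition~\ref{prop:eh_epsh_Gronwall}; the dangerous coupling term $2((\nabla\ecko_u^n)\aoC,\,\ol{D}_{\Delta t}\Ecko_C^n)$ is handled by Young's inequality, trading part of $\|\ol{D}_{\Delta t}\Ecko_C^n\|_0^2$ against $\|\ecko_u^n\|_1^2$, which is why the weight on~\eqref{ieq:error_C_substitution_D_dt_E} must be proportional to $\nu/M_0^2$.
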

\begin{theo}[error estimates~II]\label{thm:error_estimates_pressure}
Suppose Hypotheses~\ref{hyp:w} and~\ref{hyp:regularity} hold.
Let $h_0$ and $c_0$ be the constants stated in Theorem~\ref{thm:error_estimates}.
Then, there exists a positive constant $c_\ddagger$ such that, for any pair~$(h, \Delta t)$ with~\eqref{condition:h_dt}
the solution $(\ucko_h, p_h, \Ccko_h)$ of scheme~\eqref{lin_scheme} with~\eqref{cond:if} satisfies the estimates,
\begin{align}
\biggl\|\ol{D}_{\Delta t}\ucko_h-\prz{\ucko}{t}\biggr\|_{\ell^2(L^2)},\ \ \|p_h-p\|_{\ell^2(L^2)} \le c_\ddagger (\Delta t + h).
\label{ieq:main_results_pressure}
\end{align}
\end{theo}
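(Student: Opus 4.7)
The strategy is to bootstrap Theorem~\ref{thm:error_estimates_pressure} from Theorem~\ref{thm:error_estimates} in two stages: first obtain an $\ell^2(L^2)$ control of $\ol{D}_{\Delta t}\xi_u$, then upgrade the seminorm bound on $\xi_p$ to a full $L^2$ bound via a continuous inf-sup argument. Throughout, the errors are split by the Stokes--Poisson projection~\eqref{StokesPoisson_projection}: $\au - \ucko^n = \eta_u^n + \xi_u^n$ with $\eta_u^n := \SPu^n - \ucko^n$, $\xi_u^n := \au - \SPu^n$, and analogously for the pressure. Standard projection and Taylor estimates under Hypothesis~\ref{hyp:regularity} give $\|\ol{D}_{\Delta t}\eta_u\|_{\ell^2(L^2)} + \|\eta_p\|_{\ell^2(L^2)} = O(h)$ and $\|\ol{D}_{\Delta t}\ucko - \partial\ucko/\partial t\|_{\ell^2(L^2)} \le c\Delta t\,\|\partial_t^2\ucko\|_{L^2(L^2)}$, so the task reduces to proving $\|\ol{D}_{\Delta t}\xi_u\|_{\ell^2(L^2)} + \|\xi_p\|_{\ell^2(L^2)} = O(\Delta t + h)$.

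For \textbf{Stage~1}, subtracting the projected weak form from~\eqref{lin_scheme_velocity_pressure} yields
\[
(\ol{D}_{\Delta t}\xi_u^n,\vcko_h) + \mathcal{A}_h\( (\xi_u^n,\xi_p^n), (\vcko_h,q_h) \) = \langle R^n,(\vcko_h,q_h)\rangle \quad\forall (\vcko_h,q_h)\in V_h\times Q_h,
\]
where $R^n$ collects the characteristics truncation (controlled by Proposition~\ref{prop:RT_TU}), the time-discretization residual $\partial_t\ucko^n - \ol{D}_{\Delta t}\ucko^n$, the term $-\ol{D}_{\Delta t}\eta_u^n$, the stabilization consistency $\mathcal{S}_h(\SPp^n,q_h)$ and the elastic-stress mismatch $(\atrC\aoC - \trC^n\Ccko^n,\nabla\vcko_h)$. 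I would test with $(\ol{D}_{\Delta t}\xi_u^n,\xi_p^n)$ and exploit the stabilization identity $b(\xi_u^n,q_h) = \mathcal{S}_h(\xi_p^n,q_h)$ (obtained by testing the velocity equation with $\vcko_h = 0$) to reduce the mixed $b$-terms to $\mathcal{S}_h(\xi_p^n,\xi_p^n) + \mathcal{S}_h(\ol{D}_{\Delta t}\xi_p^n,\xi_p^n)$. After summation, the left-hand side produces $\|\ol{D}_{\Delta t}\xi_u\|_{\ell^2(L^2)}^2 + \tfrac{\nu}{2}|\xi_u^N|_1^2 + \tfrac{1}{2}\mathcal{S}_h(\xi_p^N,\xi_p^N)$ plus nonnegative contributions, while the elastic-stress residual, split as $(\atrC - \trC^n)\aoC + \trC^n(\aoC - \Ccko^n)$ and bounded in $L^2$ using the $L^\infty$ stability~\eqref{ieq:stability} together with Theorem~\ref{thm:error_estimates}, is handled by Abel summation in~$n$ that transfers the time difference onto the smoother factor $\trC\,\Ccko$ (whose time derivative is in $L^2(L^2)$ by Hypothesis~\ref{hyp:regularity}), with the boundary contribution absorbed into $\tfrac{\nu}{2}|\xi_u^N|_1^2$. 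Discrete Gronwall then closes $\|\ol{D}_{\Delta t}\xi_u\|_{\ell^2(L^2)} = O(\Delta t+h)$.

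For \textbf{Stage~2}, the continuous Stokes inf-sup on $V\times Q$ gives, for each~$n$, a $\vcko^n\in V$ with $-\di\vcko^n = \xi_p^n$ and $\|\vcko^n\|_1 \le \beta^{-1}\|\xi_p^n\|_0$. Let $\Pi_h\vcko^n\in V_h$ be a Scott--Zhang interpolant. Testing the error equation with $(\Pi_h\vcko^n,0)$ yields
\[
\|\xi_p^n\|_0^2 = b(\vcko^n,\xi_p^n) = b(\Pi_h\vcko^n,\xi_p^n) + b(\vcko^n - \Pi_h\vcko^n,\xi_p^n),
\]
where the first summand is replaced, via the error equation, by $-(\ol{D}_{\Delta t}\xi_u^n,\Pi_h\vcko^n) - \nu a_u(\xi_u^n,\Pi_h\vcko^n) + \langle R^n,(\Pi_h\vcko^n,0)\rangle$, and the second is handled elementwise by integration by parts together with the interpolation estimate $\bigl(\sum_K h_K^{-2}\|\vcko^n - \Pi_h\vcko^n\|_{0,K}^2\bigr)^{1/2} \le c|\vcko^n|_1$, giving $|b(\vcko^n - \Pi_h\vcko^n,\xi_p^n)| \le c\,|\xi_p^n|_h\,\|\xi_p^n\|_0$. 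Dividing by $\|\xi_p^n\|_0$, squaring, summing in~$n$, and combining with Stage~1, Theorem~\ref{thm:error_estimates}, and the projection estimate $\|\eta_p\|_{\ell^2(L^2)} = O(h)$ completes the proof.

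The main obstacle is the elastic-stress residual in Stage~1: when paired with $\nabla\ol{D}_{\Delta t}\xi_u^n$, a naive Cauchy--Schwarz would introduce $|\ol{D}_{\Delta t}\xi_u^n|_1$, whose $\ell^2$ control from the energy is proportional only to $\Delta t^{1/2}$, so Young absorption would incur an inadmissible factor $\Delta t^{-1}$. The Abel summation in time circumvents this, but it requires delicate bookkeeping to exploit the $\ell^\infty(H^1)\cap\ell^2(L^2)$ control of $e_C$ jointly with the $L^\infty$ stability~\eqref{ieq:stability}; the condition~\eqref{condition:h_dt} (inherited from Theorem~\ref{thm:error_estimates}) enters precisely to validate the $L^\infty$ bound on $\Ccko_h$ through an inverse inequality, which is what keeps the linearization of the tensor nonlinearity closed.
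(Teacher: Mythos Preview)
Your Stage~1 is essentially the paper's proof: the paper tests~\eqref{eq:error_up} with $(\ol{D}_{\Delta t}\ecko_h^n, 0)$ and then subtracts the identity $b(\ol{D}_{\Delta t}\ecko_h^n, \epsilon_h^n) = \mathcal{S}_h(\ol{D}_{\Delta t}\epsilon_h^n, \epsilon_h^n)$, arriving at the same combination $\nu a_u(\ecko_h^n, \ol{D}_{\Delta t}\ecko_h^n) + \mathcal{S}_h(\ol{D}_{\Delta t}\epsilon_h^n, \epsilon_h^n)$ that your test function $(\ol{D}_{\Delta t}\xi_u^n, \xi_p^n)$ produces; the elastic residuals are then handled by Abel summation exactly as you propose (this is Lemma~\ref{lem:rh_pressure}), with the increments $\|\rcko_{hi}^{n+1}-\rcko_{hi}^n\|_0$ controlled via $\|\ol{D}_{\Delta t}\Ecko_h\|_{\ell^2(L^2)}$ from Theorem~\ref{thm:error_estimates}. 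Two minor corrections: the ``stabilization consistency'' term $\mathcal{S}_h(\SPp^n,q_h)$ you list among the residuals does not appear, because the projection~\eqref{StokesPoisson_projection} is defined through $\mathcal{A}_h$ itself; and the identity $b(\xi_u^n,q_h)=\mathcal{S}_h(\xi_p^n,q_h)$ at $n=0$ requires the auxiliary definition $p_h^0 := [\Pi_h^{\rm SP}(\ucko^0,0,\Ccko^0)]_2$, which the paper introduces precisely for this step. Stage~2 is where you genuinely diverge. The paper takes the short route: it invokes the \emph{discrete} generalized inf-sup for $\mathcal{A}_h$ (Lemma~\ref{lem:estimates_SP_projection}(i)) to write $\|\epsilon_h^n\|_0 \le (\nu\alpha_{30})^{-1}\sup_{(\vcko_h,q_h)} \mathcal{A}_h((\ecko_h^n,\epsilon_h^n),(\vcko_h,q_h))/\|(\vcko_h,q_h)\|_{V\times Q}$, replaces the numerator via the error equation, and bounds termwise using Stage~1 and Lemma~\ref{lem:estimates_r_R}. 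Your Verf\"urth-style argument (continuous inf-sup, Scott--Zhang interpolant, interpolation defect absorbed by $|\epsilon_h^n|_h$) is a correct alternative and has the advantage of not appealing to a discrete inf-sup constant; the paper's route is shorter because the stabilized bilinear form already comes equipped with one.
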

\begin{remark}
(i)~The condition~\eqref{condition:h_dt} is mild in comparison with, e.g., the CFL condition of the form~$\|\wcko\|_{C(L^\infty)}\Delta t \le c h$.
We can take $\Delta t = c h^{\alpha}$ for any $\alpha > 0~(d=2)$ or $\alpha \ge 1/2~(d=3)$.
\smallskip\\
(ii)~The condition~\eqref{condition:h_dt} is needed to deal with the nonlinearity of the model or, more precisely, to get the boundedness of $\|\Ccko_h^n\|_{0,\infty}$ by using the inverse inequality~\eqref{ieq:inverse}, cf. the estimate~\eqref{ieq:Ch_0_infty} with~\eqref{def:c0_h0_Linf}.
In fact, the stabilized {\rm Lagrange}--{\rm Galerkin} scheme for the {\rm Oseen} equations is stable under only~\eqref{cond:dt_w_Jacobian}, cf.~\cite{NT-2015-JSC}.
\end{remark}
%%%%%%%%%%%%%%%%%%%%%%%%%%%%%
%
%
%
%
%
%
%%%%%%%%%%%%%%%%%%%%%%%%%%%%%%%%%%%%%%%%%%%%%
\section{Proofs}\label{sec:proofs}
%%%%%%%%%%%%%%%%%%%%%%%%%%%%%%%%%%%%%%%%%%%%%
In what follows we prove Proposition~\ref{prop:existence_uniqueness} and Theorems~\ref{thm:error_estimates} and~\ref{thm:error_estimates_pressure}.
%
%%%%%%%%%%%%%%%%%%%%%%%%%%%%%%%%%
\subsection{Preliminaries}
Let us list lemmas employed directly in the proofs below.
In the lemmas, $\alpha_i$, $i=1,\ldots,4$, are numerical constants independent of~$h$, $\Delta t$, $\nu$ and~$\varepsilon$.
\begin{lemma}[ \cite{Nec-1967} ]\label{lem:Korn}
Let $\Omega$ be a bounded domain with a {\rm Lipschitz}-continuous boundary.
Then, the following inequalities hold.
\begin{align*}
\|\D{\vcko}\|_0 \le \|\vcko\|_1 \le \alpha_1 \|\D{\vcko}\|_0,\qquad \forall \vcko \in H^1_0(\Omega)^\DIM.
\end{align*}
\end{lemma}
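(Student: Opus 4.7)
The left-hand inequality is essentially free: each component of $\D{\vcko}$ is a sum of two entries of $\nabla\vcko$ divided by $2$, so a direct Cauchy--Schwarz bound gives $\|\D{\vcko}\|_0 \le \|\nabla\vcko\|_0 \le \|\vcko\|_1$. The real content is the opposite estimate, which is a version of \textbf{Korn's inequality} for functions vanishing on $\Gamma$; the plan is to reduce it to the Poincar\'e inequality by controlling the full gradient by the symmetric gradient.

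The key step is the algebraic expansion
\begin{align*}
|\D{\vcko}|^2 = \tfrac{1}{4}\sum_{i,j}\(\pz_i v_j + \pz_j v_i\)^2 = \tfrac{1}{2}|\nabla\vcko|^2 + \tfrac{1}{2}\sum_{i,j}\pz_i v_j\,\pz_j v_i,
\end{align*}
combined with the integration-by-parts identity
\begin{align*}
\int_\Omega \pz_i v_j\,\pz_j v_i \, dx = \int_\Omega (\di\vcko)^2 \, dx, \qquad \vcko\in H^1_0(\Omega)^\DIM,
\end{align*}
obtained by integrating by parts first in $x_i$ and then in $x_j$ (both boundary terms vanish because $\vcko=0$ on $\Gamma$). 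Putting the two together yields
\begin{align*}
\int_\Omega |\nabla\vcko|^2 \, dx = 2\int_\Omega |\D{\vcko}|^2 \, dx - \int_\Omega (\di\vcko)^2 \, dx \le 2\,\|\D{\vcko}\|_0^2,
\end{align*}
so $\|\nabla\vcko\|_0 \le \sqrt{2}\,\|\D{\vcko}\|_0$. Finally I would invoke the Poincar\'e inequality on $H^1_0(\Omega)$ to bound $\|\vcko\|_0$ by $\|\nabla\vcko\|_0$, and combine the two to get $\|\vcko\|_1 \le \alpha_1\|\D{\vcko}\|_0$ with $\alpha_1 = \sqrt{2(1+C_P^2)}$.

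There is no serious obstacle in the $H^1_0$ case because the boundary terms in the integration by parts drop out; the only nontrivial ingredient is Poincar\'e's inequality, which is classical for Lipschitz domains. (If one wanted the more delicate Korn inequality for general $\vcko\in H^1(\Omega)^\DIM$ without the boundary condition, the identity above is no longer available and one has to rely on Ne\v{c}as' negative-norm characterization of $L^2(\Omega)$; but that generality is not needed here, so I would simply cite \cite{Nec-1967} for completeness after presenting the short $H^1_0$ argument.)
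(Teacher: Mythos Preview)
Your proof is correct. The paper itself does not prove this lemma: it is stated with a citation to Ne\v{c}as~\cite{Nec-1967} and used as a black box, with $\alpha_1$ treated as an unspecified numerical constant. What you supply is the standard direct proof of Korn's first inequality in the $H^1_0$ setting --- the pointwise identity for $|\D{\vcko}|^2$, the integration-by-parts identity $\sum_{i,j}\int_\Omega \pz_i v_j\,\pz_j v_i\,dx = \int_\Omega(\di\vcko)^2\,dx$, and then Poincar\'e. One minor polish: the double integration by parts passes through the second derivatives $\pz_i\pz_j v_i$, so strictly speaking you should state it first for $\vcko\in C_c^\infty(\Omega)^\DIM$ and then pass to $H^1_0(\Omega)^\DIM$ by density; since the final identity involves only first-order terms on both sides, this limit is routine. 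Your route has the advantage of being self-contained and yielding the explicit constant $\alpha_1=\sqrt{2(1+C_P^2)}$, which the paper does not record.
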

Let $\Pi_h: C(\barO)\to M_h$ be the {Lagrange} interpolation operator.
The operators defined on~$C(\barO)^\DIM$ and~$C(\barO)^{\DIM \times \DIM}$ are also denoted by the same symbol~$\Pi_h$.
We introduce the function
\begin{align}
D(h) \defeq
\left\{
\begin{aligned}
& (1+|\log h|)^{1/2} && (d=2),\\
& h^{-1/2} && (d=3),
\end{aligned}
\right.
\label{Dofh}
\end{align}
which is used in the sequel.
\begin{lemma}[ \cite{BreSco-2008,Cia-1978} ]\label{linear_operators}
The following inequalities hold.
\begin{align}
 \n{0,\infty}{\Pi_h\Dcko} &\leq \n{0,\infty}{\Dcko}, & \forall \Dcko &\in C(\barO)^{\DIM \times \DIM}, \notag\\
 \n{1}{\Pi_h \Dcko - \Dcko} &\leq \alpha_{20} h \n{2}{\Dcko}, & \forall \Dcko & \in H^2(\Omega)^{\DIM \times \DIM}, \notag\\
 \n{0,\infty}{\Dcko_h} &\leq \alpha_{21} D(h) \n{1}{\Dcko_h}, & \forall \Dcko_h & \in W_h.
 \label{ieq:inverse}
\end{align}
\end{lemma}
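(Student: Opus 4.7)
Since Lemma~\ref{linear_operators} collects three classical facts about Lagrange interpolation and inverse estimates for continuous piecewise linear finite elements, the plan is to reduce each inequality to a standard argument rather than rederive it in detail.

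For the stability bound on the interpolant, I would argue elementwise: on any simplex $K \in \trian$ the interpolant $\Pi_h \Dcko$ is affine, hence attains its extreme values at the vertices of~$K$, where it coincides with $\Dcko$. Therefore $\|\Pi_h \Dcko\|_{L^\infty(K)} \le \max_{v} |\Dcko(v)| \le \|\Dcko\|_{0,\infty}$, where the maximum runs over the vertices of $K$; taking the maximum over $K$ yields the first inequality. For the $H^1$ interpolation estimate I would invoke the Bramble--Hilbert lemma on a fixed reference simplex $\hat K$: since the reference interpolation operator reproduces $P_1(\hat K)$ and is continuous from $H^2(\hat K)$ into $H^1(\hat K)$, one obtains $\|\hat \Pi \hat\Dcko - \hat\Dcko\|_{H^1(\hat K)} \le C\,|\hat\Dcko|_{H^2(\hat K)}$. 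An affine pullback from $K$ to $\hat K$ combined with the shape regularity of $\{\trian\}_{h \downarrow 0}$ transfers this to $\|\Pi_h \Dcko - \Dcko\|_{H^1(K)} \le C\,h_K\,|\Dcko|_{H^2(K)}$; squaring and summing over $K \in \trian$, and bounding $h_K \le h$, gives the claimed constant $\alpha_{20}$.

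For the inverse inequality~\eqref{ieq:inverse} the three-dimensional case is direct: a scaling argument on each $K$ yields $\|\Dcko_h\|_{L^\infty(K)} \le C\,h_K^{-d/2}\|\Dcko_h\|_{L^2(K)}$, so that with $d=3$ and shape regularity one gets $\|\Dcko_h\|_{0,\infty} \le C\,h^{-1/2}\|\Dcko_h\|_0 \le C\,h^{-1/2}\|\Dcko_h\|_1$, matching $D(h)$ in~\eqref{Dofh}. The two-dimensional case, where the sharp factor is $(1+|\log h|)^{1/2}$ rather than a power of $h$, is the main obstacle and the step I would treat most carefully. The classical approach is to pick $x_0 \in \barO$ where $|\Dcko_h|$ attains its maximum, represent $|\Dcko_h(x_0)|$ by duality against a smoothed Dirac mass $\delta_{x_0,h}$ concentrated on the element containing $x_0$, and then decompose $\Omega$ into dyadic annuli $\{x : 2^{-j-1} \le |x-x_0| \le 2^{-j}\}$; elementwise inverse estimates on each annulus sum to the required logarithmic factor, the number of relevant dyadic scales being $O(|\log h|)$. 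Because each of these ingredients is technical but entirely standard, in the actual write-up I would simply invoke the corresponding statements in Brenner--Scott~\cite{BreSco-2008} and Ciarlet~\cite{Cia-1978}.
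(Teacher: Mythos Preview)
The paper itself gives no proof of this lemma; it is stated with a citation to \cite{BreSco-2008,Cia-1978} and used as a black box. Your plan to ultimately invoke those references is therefore exactly what the paper does, and your sketches of the first two inequalities are correct and standard.

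There is, however, a genuine slip in your sketch of~\eqref{ieq:inverse} for $d=3$. The local scaling estimate you quote, $\|\Dcko_h\|_{L^\infty(K)} \le C\,h_K^{-d/2}\|\Dcko_h\|_{L^2(K)}$, yields $h^{-3/2}$ in three dimensions, not $h^{-1/2}$; bounding $\|\Dcko_h\|_0$ by $\|\Dcko_h\|_1$ afterwards does not recover the missing power of $h$. The sharp factor $D(h)=h^{-1/2}$ genuinely requires the $H^1$ norm on the right-hand side, and the usual route is to first use the continuous Sobolev embedding $H^1(\Omega)\hookrightarrow L^6(\Omega)$ to get $\|\Dcko_h\|_{0,6}\le C\,\|\Dcko_h\|_1$, and then apply the inverse inequality $\|\Dcko_h\|_{0,\infty}\le C\,h^{-d/6}\|\Dcko_h\|_{0,6}=C\,h^{-1/2}\|\Dcko_h\|_{0,6}$. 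If you keep only the $L^2$-to-$L^\infty$ step you end up with a suboptimal bound that would force a more restrictive time-step condition than the one in~\eqref{condition:h_dt}, so this is not a harmless oversight. Your dyadic-annulus outline for the logarithmic factor in $d=2$ is fine.
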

\par
The next lemma is obtained by combining the error estimates for the {Stokes} and the {Poisson} problems, see, e.g.,~\cite{BreDou-1988,Cia-1978,FraSte-1991} for the proof.
\begin{lemma} \label{lem:estimates_SP_projection}
(i)~The following inequality holds.
\begin{align*}
\inf_{(u_h,p_h)\in V_h\times Q_h}\sup_{(v_h,q_h)\in V_h\times Q_h} \fz{\mathcal{A}_h((u_h,p_h), (v_h,q_h))}{\|(u_h,p_h)\|_{V\times Q} \|(v_h,q_h)\|_{V\times Q}}\ge \nu\alpha_{30}.
\end{align*}
(ii)~Assume $(\ucko, p, \Ccko) \in (V \cap H^2(\Omega)^\DIM) \times (Q \cap H^1(\Omega)) \times (W \cap H^2(\Omega)^{\DIM \times \DIM})$.
Let $(\hat{\ucko}_h, \hat{p}_h, \hat{\Ccko}_h) \in V_h \times Q_h \times W_h$ be the {\rm {Stokes}--{Poisson}} projection of $(\ucko, p, \Ccko)$ defined by~\eqref{StokesPoisson_projection}.
Then, the following inequalities hold.
\begin{align*}
\n{1}{\SPu-\ucko}, \ \n{0}{\SPp-p}, \ |\SPp-p|_h & \le \fz{\alpha_{31}}{\nu} h \n{H^2 \times H^1 }{(\ucko,p)},
&
\|\SPC-\Ccko\|_1 & \le \alpha_{32} h \|\Ccko\|_2.
\end{align*}
\end{lemma}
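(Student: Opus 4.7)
The plan is to handle (i) and (ii) separately, since the remark following the definition of the Stokes--Poisson projection observes that \eqref{StokesPoisson_projection} decouples into a Stokes-type problem for $(\SPu,\SPp)$ with the stabilization $\mathcal{S}_h$ and an independent Poisson-type problem for $\SPC$ with bilinear form $a_c(\cdot,\cdot)+(\cdot,\cdot)$. This way (i) is exactly the Brezzi--Pitk\"aranta stabilized inf-sup condition for equal-order $P_1/P_1$ elements applied to $\mathcal{A}_h$, and part (ii) splits into the corresponding Galerkin error analyses.

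For part (i), I would first test with $(\vcko_h,q_h)=(\ucko_h,-p_h)$ to obtain $\mathcal{A}_h((\ucko_h,p_h),(\ucko_h,-p_h))=\nu a_u(\ucko_h,\ucko_h)+\mathcal{S}_h(p_h,p_h)$, which by Lemma~\ref{lem:Korn} controls $\nu\|\ucko_h\|_1^2+|p_h|_h^2$. To recover the missing $L^2$-norm of $p_h$, I would invoke the continuous inf-sup condition for $b$ on $V\times Q$ to produce $\vcko\in V$ with $b(\vcko,p_h)\ge\|p_h\|_0^2$ and $\|\vcko\|_1\lesssim\|p_h\|_0$, and use a Cl\'ement/Scott--Zhang interpolant $\vcko_h\in V_h$ satisfying $\|\vcko-\vcko_h\|_0\lesssim h\|\vcko\|_1$. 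Then I would test $\mathcal{A}_h$ with $(\vcko_h,0)$, rewriting
\begin{align*}
b(\vcko_h,p_h)=b(\vcko,p_h)-(p_h,\di(\vcko-\vcko_h))
\end{align*}
and absorb the second term through elementwise integration by parts and $h\|\nabla p_h\|_{0,K}\le \delta_0^{-1/2}|p_h|_h$. Combining the two test choices (suitably weighted) yields the inf-sup bound with a constant proportional to $\nu$.

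For part (ii), the Stokes block is handled by subtracting \eqref{StokesPoisson_projection} from the continuous Stokes identity to obtain the defect equation
\begin{align*}
\mathcal{A}_h((\SPu-\ucko,\SPp-p),(\vcko_h,q_h))=-\mathcal{S}_h(p,q_h),
\end{align*}
since $\mathcal{A}((\ucko,p),(\vcko_h,q_h))=\mathcal{A}_h((\ucko,p),(\vcko_h,q_h))+\mathcal{S}_h(p,q_h)$. I would split $\SPu-\ucko=(\SPu-\Pi_h\ucko)+(\Pi_h\ucko-\ucko)$, and likewise for $p$; applying part~(i) to the discrete difference, bounding the $\mathcal{S}_h$-consistency term by $|\mathcal{S}_h(p,q_h)|\le\delta_0^{1/2}h\|p\|_1|q_h|_h$, and using the interpolation estimates from Lemma~\ref{linear_operators}, I obtain the stated $O(h)$ bounds with a factor $1/\nu$ inherited from the inf-sup constant. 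The Poisson block is more direct: the bilinear form $a_c(\cdot,\cdot)+(\cdot,\cdot)$ is coercive on $W$, so C\'ea's lemma together with the $H^1$ interpolation estimate of Lemma~\ref{linear_operators} gives $\|\SPC-\Ccko\|_1\le\alpha_{32}h\|\Ccko\|_2$ immediately.

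The main technical point is part~(i): the bookkeeping needed to combine the two test functions and to correctly track how the stabilization term substitutes for a genuine discrete inf-sup on $b$. Tracking the $\nu$-dependence (which is then inherited by the Stokes-projection estimate) and verifying that $\delta_0$ can be chosen independently of $\nu$, $\varepsilon$, $h$, $\Delta t$ is the delicate part; once done, the interpolation arguments in part~(ii) are routine applications of Lemma~\ref{linear_operators} combined with the inf-sup result from part~(i).
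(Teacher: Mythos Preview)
Your proposal is correct and matches the standard approach from the references the paper cites (Brezzi--Douglas, Ciarlet, Franca--Stenberg); the paper itself does not spell out a proof for this lemma but simply defers to those sources, noting in Remark~\ref{rmk:alpha3} that the $\nu$-dependence in~(ii) follows from the inf-sup bound~(i) after tracking the diffusion constant through the Franca--Stenberg argument. Two minor sign slips in your write-up (in the difference $b(\vcko_h,p_h)-b(\vcko,p_h)$ and in the consistency term $\mathcal{S}_h(p,q_h)$ of the defect equation) are harmless for the estimates.
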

\begin{remark}\label{rmk:alpha3}
Let us note that the first part of error estimates in~(ii) is based on the generalized inf-sup condition in~(i) that is satisfied by the bilinear form $\mathcal{A}_h$ defined above and the pair of the discrete function spaces~$V_h$ and~$Q_h$, where the $\nu$-dependency is obtained by a simple modification of the analysis in, e.g., \cite{FraSte-1991} after taking into account the diffusion constant.
\end{remark}
\begin{remark}
As pointed out in~\cite{BPS-2001}, there are basically three possible approaches to obtain stable and convergent numerical methods for viscoelastic fluid flow problems. Firstly, the usual {\rm Galerkin} methods using finite element spaces satisfying the inf-sup condition, e.g., \cite{MarCro-1987,ForFor-1989,BaSa-1992}.
Secondly, the equal-order approximations for the velocity, pressure and stress with stabilization terms added to the usual weak formulation, see for instance~\cite{FanTanPha-1999}.
And finally, the elastic viscous split stress (EVSS) method, e.g., \cite{GuFo-1995,RajaArmBr-1990,Yu-1997}, in which the stress is split into two parts, the elastic and the viscous part.
Scheme~\eqref{lin_scheme} is classified into the second approach.
Theorems~\ref{thm:error_estimates} and~\ref{thm:error_estimates_pressure} imply that our method for the {\rm Peterlin} viscoelastic model is indeed stable and convergent.
\end{remark}
\begin{lemma}[ \cite{NT-2015-JSC,RuiTab-2002} ]\label{lem:composite_func}
Under Hypothesis~\ref{hyp:w} and the condition~\eqref{cond:dt_w_Jacobian} the following inequalities hold for any $n \in \{0,\ldots,N_T\}$.
\begin{align*}
\n{0}{\gcko \circ X_1^n} & \leq (1 + \alpha_{40} |\wcko^n|_{1,\infty} \Delta t) \n{0}{\gcko}, && \forall \gcko \in L^2(\Omega)^s, \\
\n{0}{\gcko - \gcko \circ X_1^n} & \leq \alpha_{41} \|\wcko^n\|_{0,\infty} \Delta t \, |\gcko|_1, && \forall \gcko \in H^1(\Omega)^s,
\end{align*}
where $s = \DIM$ or $\DIM \times \DIM$.
\end{lemma}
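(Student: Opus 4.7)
The plan is to establish both inequalities by change-of-variables arguments, exploiting Proposition~\ref{prop:RT_TU}, which under~\eqref{cond:dt_w_Jacobian} guarantees that $X_1^n:\Omega\to\Omega$ is a bijection with Jacobian $J^n\in[1/2,3/2]$. I will treat the scalar case; the vector and matrix cases follow componentwise, which is exactly why the same constants serve for $s=\DIM$ and $s=\DIM\times\DIM$.

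For the first inequality, I would substitute $y=X_1^n(x)$ in $\n{0}{\gcko\circ X_1^n}^2$ to obtain $\int_\Omega |\gcko(y)|^2/J^n((X_1^n)^{-1}(y))\,dy$. Since $J^n(x)=\det(I-\Delta t\,\nabla\wcko^n(x))$, expanding the determinant gives $J^n\ge 1 - c_1\Delta t|\wcko^n|_{1,\infty} - c_2(\Delta t|\wcko^n|_{1,\infty})^2$ with purely dimensional constants $c_1,c_2$. The bound $\Delta t|\wcko^n|_{1,\infty}\le 1/4$ from~\eqref{cond:dt_w_Jacobian} absorbs the quadratic term into the linear one, so $1/J^n\le 1 + c\Delta t|\wcko^n|_{1,\infty}$, and the elementary estimate $(1+c\xi)^{1/2}\le 1 + (c/2)\xi$ yields the claim with $\alpha_{40}=c/2$.

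For the second inequality, the starting point is the identity $x-X_1^n(x)=\Delta t\,\wcko^n(x)$. For $\gcko\in C^1(\barO)$ (and by density for $\gcko\in H^1(\Omega)$), the fundamental theorem of calculus gives $\gcko(x)-\gcko(X_1^n(x)) = \Delta t\int_0^1 \nabla\gcko(\xi_s(x))\cdot\wcko^n(x)\,ds$, where $\xi_s(x)\defeq x - s\Delta t\,\wcko^n(x)$. Applying Cauchy--Schwarz in $s$, squaring, integrating over $\Omega$, and swapping integrals by Fubini reduces the task to estimating $\int_\Omega|\nabla\gcko(\xi_s(x))|^2\,dx$ uniformly in $s\in[0,1]$. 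For each fixed $s$, I would apply Proposition~\ref{prop:RT_TU} to the velocity $s\wcko^n$ in place of $\wcko^n$ (which still satisfies $s\Delta t|\wcko^n|_{1,\infty}\le 1/4$ and vanishes on $\partial\Omega$), so $\xi_s$ is a bijection of $\Omega$ with Jacobian in $[1/2,3/2]$. The change of variables $y=\xi_s(x)$ then gives $\int_\Omega|\nabla\gcko(\xi_s(x))|^2\,dx\le 2|\gcko|_1^2$, and taking the square root delivers the bound with $\alpha_{41}=\sqrt{2}$.

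The main technical obstacle I anticipate is the uniform-in-$s$ control of $\xi_s$: one must check that the whole one-parameter family of intermediate maps preserves $\Omega$ and has a comparable Jacobian, not merely the endpoint map $X_1^n$. This is exactly where the boundary vanishing built into Hypothesis~\ref{hyp:w} ($\wcko\in C([0,T];W^{1,\infty}_0(\Omega)^\DIM)$) enters decisively: the trace of $\wcko^n$ on $\partial\Omega$ is zero, so $\xi_s(\partial\Omega)\subset\partial\Omega$ for every $s$, and the scaling factor $s\le 1$ only improves the smallness condition used in Proposition~\ref{prop:RT_TU}. Once this uniform control is in place, both inequalities follow with constants depending only on the spatial dimension, in particular independent of $\nu,\varepsilon,h$, and $\Delta t$.
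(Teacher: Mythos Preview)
Your proposal is correct and, for the first estimate, follows the same change-of-variables argument as the paper; the only difference is cosmetic---the paper bounds $1/J^n$ via $1/(1-s)\le 1+2s$ for $s=|1-J^n|\le 1/2$ and then observes $1+2\alpha_{40}|\wcko^n|_{1,\infty}\Delta t\le(1+\alpha_{40}|\wcko^n|_{1,\infty}\Delta t)^2$, whereas you expand the determinant and take a square root, but the structure is identical. For the second estimate the paper gives no argument at all and simply refers to~\cite{NT-2015-JSC}; your line-integral/fundamental-theorem-of-calculus proof with the uniform-in-$s$ Jacobian control of $\xi_s$ (invoking Proposition~\ref{prop:RT_TU} with $s\wcko^n$ in place of $\wcko^n$) is the standard route and is exactly what one finds in that reference, so you have filled in the omitted details correctly.
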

\begin{proof}
We prove only the former estimate, and see the proof of~\cite[Lemma~6]{NT-2015-JSC} for the latter.
Let $n \in \{0,\ldots,N_T\}$ be fixed arbitrarily.
By changing the variable from~$x$ to~$y \defeq X_1^n(x)$, we have
\[
\|\gcko \circ X_1^n\|_0^2
= \int_\Omega \gcko \( X_1^n (x) \)^2\,dx
= \int_\Omega \gcko ( y )^2 \fz{1}{J^n} \, dy
\le \(1 + \alpha_{40} |\wcko^n|_{1,\infty} \Delta t \)^2 \|\gcko \|_0^2,
\]
where $J^n$ is the Jacobian ${\rm det}({\partial y}/{\partial x})$.
Here we have used the estimate,
\begin{align*}
\fz{1}{J^n}
\le \fz{1}{1-|1-J^n|}
\le 1 + 2 |1-J^n|
\le 1 + 2 \alpha_{40} |\wcko^n|_{1,\infty} \Delta t
\le (1 + \alpha_{40} |\wcko^n|_{1,\infty} \Delta t)^2,
\end{align*}
which is derived from Proposition~\ref{prop:RT_TU}-(ii) and $1/(1-s) \le 1+2s~(s \in [0, 1/2])$.
\end{proof}
We use the following simplified version of the discrete Gronwall inequality~\cite[Lemma~5.1]{HeyRan-1990}.
\begin{lemma}\label{lem:Gronwall}
Let $\alpha$ and $\beta$ be non-negative numbers, $\Delta t$ a positive number, and $\{x^n\}_{n\ge 0}$ and $\{y^n\}_{n\ge 1}$ non-negative sequences.
Suppose the inequality
\begin{align*}
x^m + \Delta t \sum_{n=1}^m y^n \leq \alpha \Delta t \sum_{n=0}^{m-1} x^n + \beta, \quad \forall m \ge 0,
\end{align*}
holds.
Then, it holds that
\begin{align*}
x^m + \Delta t \sum_{n=1}^m y^n \leq (1+\alpha \Delta t)^m \beta, \quad \forall m \ge 0.
\end{align*}
\end{lemma}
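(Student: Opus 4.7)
The plan is to proceed by induction on $m$, exploiting the fact that the hypothesis is a recursive bound whose natural closed form is the geometric expression $(1+\alpha\Delta t)^m\beta$.

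First I would dispose of the base case $m=0$. Both sums in the hypothesis are empty, so it collapses to $x^0\le\beta$, which is precisely the desired conclusion since $(1+\alpha\Delta t)^0\beta=\beta$ and the sum $\Delta t\sum_{n=1}^{0}y^n$ is vacuous.

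For the inductive step, assume the conclusion has been established for every index $k$ with $0\le k\le m-1$. Because $\{y^n\}_{n\ge 1}$ is nonnegative, the inductive bound on the combined quantity $x^k+\Delta t\sum_{n=1}^{k}y^n$ immediately yields the pointwise bound
\[
x^k \le (1+\alpha\Delta t)^k\,\beta, \qquad k=0,\ldots,m-1.
\]
Substituting these into the right-hand side of the hypothesis at index $m$ and invoking the finite geometric series identity, I would compute
\[
\alpha\Delta t\sum_{n=0}^{m-1}x^n \;\le\; \alpha\Delta t\,\beta\sum_{n=0}^{m-1}(1+\alpha\Delta t)^n \;=\; \beta\bigl[(1+\alpha\Delta t)^m-1\bigr].
\]
Adding the remaining $\beta$ on the right-hand side gives exactly $(1+\alpha\Delta t)^m\beta$, which closes the induction.

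There is essentially no obstacle here: the argument is a standard induction, and the only points to watch are (i) the correct handling of the empty sums in the base case and (ii) the passage from the combined bound on $x^k+\Delta t\sum y^n$ to the pointwise bound on $x^k$ alone, which rests precisely on the assumed nonnegativity of $\{y^n\}$. Neither the magnitude of $\alpha\Delta t$ nor the sign of $\alpha\Delta t-1$ plays any role, so no auxiliary smallness assumption on $\Delta t$ is needed.
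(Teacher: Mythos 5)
Your induction is correct: the base case handles the empty sums properly, the passage from the combined bound to the pointwise bound $x^k\le(1+\alpha\Delta t)^k\beta$ is justified by the nonnegativity of $\{y^n\}$, and the telescoping identity $\alpha\Delta t\sum_{n=0}^{m-1}(1+\alpha\Delta t)^n=(1+\alpha\Delta t)^m-1$ holds for all $\alpha\ge 0$, so no smallness of $\alpha\Delta t$ is needed. The paper itself gives no proof of this lemma (it is quoted from Heywood and Rannacher, Lemma~5.1), and your argument is the standard one that establishes it.
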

%
%
%
%
%
%
%
%%%%%%%%%%%%%%%%%%%%%%%%%%%%%%%%%%%%%%%%%%%%%
\subsection{Proof of Proposition~\ref{prop:existence_uniqueness}}
%%%%%%%%%%%%%%%%%%%%%%%%%%%%%%%%%%%%%%%%%%%%%
For each time step~$n$ scheme~\eqref{lin_scheme} can be rewritten as
\begin{subequations}\label{lin_scheme_rewritten}
\begin{align}
\Bigl( \fz{\au}{\Delta t}, \vcko_h \Bigr) + \nu a_u(\au, \vcko_h) + b(\vcko_h,\ap) + ((\atrC) \aoC, \nabla \vcko_h)
&= (\mathbf{g}_h^n,\vcko_h), \qquad\quad \forall \vcko_h \in V_h,
\label{lin_scheme_rewritten_velocity_pressure}\\
b( \au, q_h) - \mathcal{S}_h( \ap, q_h) &=0, \qquad\qquad\quad\quad  \forall q_h \in Q_h,
\label{lin_scheme_rewritten_divergence}\\
\Bigl(\frac{\aC}{\Delta t},\Dcko_h\Bigr) + \varepsilon a_{c}\(\aC,\Dcko_h\) - 2\((\nabla\au)\aoC,\Dcko_h\) + \( (\aotrC)^2 \aC, \Dcko_h \)
&= (\mathbf{G}_h^n,\Dcko_h), \qquad\ \forall \Dcko_h \in W_h,
\label{lin_scheme_rewritten_tensor}
\end{align}
\end{subequations}
where $\mathbf{g}_h^n \defeq (1/\Delta t) (\aou\circ X_1^n) + \af$ and $\mathbf{G}_h^n \defeq (1/\Delta t) (\aoC \circ X_1^n) + (\aotrC)\Icko + \aF$.
Selecting specific bases of $V_h$, $Q_h$ and $W_h$ and expanding $\au$, $\ap$ and $\aC$ in terms of the associated basis functions, we can derive the system of linear equations from~\eqref{lin_scheme_rewritten}.
The existence and uniqueness of the solution is equivalent to the invertibility of the coefficient matrix of the system, which is obtained by proving $(\au,\ap,\aC)=(\mathbf{0},0,\mathbf{0})$ below when $(\mathbf{g}_h^n, \mathbf{G}_h^n) = (\mathbf{0},\mathbf{0})$.
Substituting $(\au,-\ap,\frac{1}{2}(\atrC)\Icko)$ into $(\vcko_h,q_h,\Dcko_h)$ in~\eqref{lin_scheme_rewritten} and adding~\eqref{lin_scheme_rewritten_divergence} to~\eqref{lin_scheme_rewritten_velocity_pressure}, we have
\begin{subequations}\label{lin_scheme_rewritten_2nd}
\begin{align}
\fz{1}{\Delta t} \n{0}{\au}^2 + 2\nu \n{0}{\D{\au}}^2 + \delta_0|\ap|_h^2 + \((\atrC)\aoC,\nabla \au\) & = 0,
\label{lin_scheme_rewritten_2nd_velocity_pressure} \\
\frac{1}{2\Delta t} \n{0}{\atrC}^2 + \fz{\varepsilon}{2} \n{0}{\nabla\atrC}^2 - \({\rm tr}[(\nabla\au)\aoC],\atrC\) + \fz{1}{2} \n{0}{\aotrC \atrC}^2 &= 0.
\label{lin_scheme_rewritten_2nd_tensor}
\end{align}
\end{subequations}
By the identity
\begin{align*}
\((\atrC)\aoC,\nabla \au\) - \({\rm tr}[(\nabla\au)\aoC], \atrC\)=0,
\end{align*}
the sum of~\eqref{lin_scheme_rewritten_2nd_velocity_pressure} and~\eqref{lin_scheme_rewritten_2nd_tensor} yields
\begin{align*}
\fz{1}{\Delta t} \n{0}{\au}^2 + 2\nu \n{0}{\D{\au}}^2 + \delta_0|\ap|_h^2
+ \frac{1}{2\Delta t} \n{0}{\atrC}^2 + \fz{\varepsilon}{2} \n{0}{\nabla\atrC}^2
+ \fz{1}{2} \n{0}{\aotrC \atrC}^2 = 0.
\end{align*}
Hence, we have $(\au, \ap) = (\mathbf{0}, 0)$.
Substituting~$\aC$ into~$\Dcko_h$ in~\eqref{lin_scheme_rewritten_tensor} and noting that $\au=\mathbf{0}$, we obtain
\begin{align*}
\frac{1}{\Delta t} \n{0}{\aC}^2 + \varepsilon \n{0}{\nabla\aC}^2 + \n{0}{(\aotrC)\aC}^2 = 0,
\end{align*}
which implies $\aC=0$.
Thus, we get $(\au,\ap,\aC)=(\mathbf{0},0,\mathbf{0})$, which completes the proof.
\qed
%
%%%%%%%%%%%%%%%%%%%%%%%%%%%%%%%%%%%%%%%%%%%%%%%%%%%%%%%%%%%%%%%%%
\subsection{An estimate at each time step}
In this subsection we present a proposition which is employed in the proof of Theorem~\ref{thm:error_estimates}.
\par
Let $(\hat{\ucko}_h, \hat{p}_h, \hat{\Ccko}_h)(t) \defeq \Pi_h^{\rm SP} (\ucko, p, \Ccko)(t) \in V_h \times Q_h \times W_h$ for $t\in [0,T]$ and let
\begin{align*}
\ecko_h^n & \defeq \ucko_h^n-\hat{\ucko}_h^n,
&
\epsilon_h^n &\defeq p_h^n-\hat{p}_h^n,
&
\Ecko_h^n & \defeq \Ccko_h^n-\hat{\Ccko}_h^n,
%\\
&
\etacko (t) & \defeq (\ucko-\hat{\ucko}_h)(t),
&
\Xicko (t) & \defeq (\Ccko-\hat{\Ccko}_h)(t).
\end{align*}
Then, from~\eqref{lin_scheme}, \eqref{StokesPoisson_projection} and~\eqref{weak_formulation}, we have for $n\ge 1$
\begin{subequations}\label{eqns:error}
\begin{align}
\biggl( \fz{\ecko_h^n - \ecko_h^{n-1} \circ X_1^n}{\Delta t}, \vcko_h \biggr) + \mathcal{A}_h \bigl( (\ecko_h^n,\epsilon_h^n), (\vcko_h,q_h) \bigr) & = \lA \rcko_h^n, \vcko_h\rA, & \forall (\vcko_h, q_h) & \in V_h\times Q_h,
\label{eq:error_up} \\
\biggl( \fz{\Ecko_h^n - \Ecko_h^{n-1} \circ X_1^n}{\Delta t}, \vcko_h \biggr) + \varepsilon a_c (\Ecko_h^n, \Dcko_h) & = \lA \Rcko_h^n, \Dcko_h\rA,
& \forall \Dcko_h & \in W_h,
\label{eq:error_C}
\end{align}
\end{subequations}
where
\begin{align}
\rcko_h^n & \defeq \sum_{i=1}^4 \rcko_{hi}^n \in V_h^\prime, \qquad \Rcko_h^n \defeq \sum_{i=1}^{11} \Rcko_{hi}^n \in W_h^\prime,
\label{defs:r_R}\\
\lA \rcko_{h1}^n, \vcko_h \rA & \defeq \( \Dt{\ucko^n} - \fz{\ucko^n - \ucko^{n-1} \circ X_1^n}{\Delta t}, \vcko_h\), \notag\\
\lA \rcko_{h2}^n, \vcko_h \rA & \defeq \fz{1}{\Delta t}\( \etacko^n - \etacko^{n-1} \circ X_1^n, \vcko_h \), \notag\\
\lA \rcko_{h3}^n, \vcko_h \rA & \defeq \bigl( (\tr \Ccko^n)(\Ccko^n-\Ccko^{n-1} + \Xicko^{n-1}-\Ecko_h^{n-1}), \nabla \vcko_h \bigr), \notag\\
\lA \rcko_{h4}^n, \vcko_h \rA & \defeq \( [\tr (\Xicko^n-\Ecko_h^n)] \Ccko_h^{n-1}, \nabla \vcko_h \), \notag\\
\lA \Rcko_{h1}^n, \Dcko_h \rA & \defeq \( \Dt{\Ccko^n} - \fz{\Ccko^n - \Ccko^{n-1} \circ X_1^n}{\Delta t}, \Dcko_h\), \notag\\
\lA \Rcko_{h2}^n, \Dcko_h \rA & \defeq \fz{1}{\Delta t} \( \Xicko^n - \Xicko^{n-1} \circ X_1^n, \Dcko_h\), \notag\\
\lA \Rcko_{h3}^n, \Dcko_h \rA & \defeq - \varepsilon (\Xicko^n, \Dcko_h), \notag\\
\lA \Rcko_{h4}^n, \Dcko_h \rA & \defeq 2\( (\nabla \ecko_h^n) \Ccko_h^{n-1}, \Dcko_h\), \notag\\
\lA \Rcko_{h5}^n, \Dcko_h \rA & \defeq -2\( (\nabla\etacko^n) \Ccko_h^{n-1}, \Dcko_h\), \notag\\
\lA \Rcko_{h6}^n, \Dcko_h \rA & \defeq -2\( (\nabla \ucko^n) (\Ccko^n-\Ccko^{n-1} + \Xicko^{n-1}-\Ecko_h^{n-1}), \Dcko_h\), \notag\\
\lA \Rcko_{h7}^n, \Dcko_h \rA & \defeq \( (\tr \Ccko_h^{n-1})^2 (\Xicko^n - \Ecko_h^n), \Dcko_h\), \notag\\
\lA \Rcko_{h8}^n, \Dcko_h \rA & \defeq -\bigl( [\tr (\Ccko_h^{n-1}+\hat{\Ccko}_h^{n-1})] (\tr \Ecko_h^{n-1}) \Ccko^n, \Dcko_h\bigr), \notag\\
\lA \Rcko_{h9}^n, \Dcko_h \rA & \defeq \bigl( [\tr (\Ccko^{n-1}+\hat{\Ccko}_h^{n-1})] (\tr \Xicko^{n-1}) \Ccko^n, \Dcko_h \bigr), \notag\\
\lA \Rcko_{h10}^n, \Dcko_h \rA & \defeq \( [\tr (\Ccko^n+\Ccko^{n-1})] [\tr (\Ccko^n-\Ccko^{n-1})] \Ccko^n, \Dcko_h\), \notag\\
\lA \Rcko_{h11}^n, \Dcko_h \rA & \defeq -\( [\tr (\Ccko^n - \Ccko^{n-1} + \Xicko^{n-1} -\Ecko_h^{n-1})]\Icko, \Dcko_h\). \notag
\end{align}
We note that
\begin{align}
(\ecko_h^0, \Ecko_h^0) = (\ucko_h^0, \Ccko_h^0) - (\hat{\ucko}_h^0, \hat{\Ccko}_h^0) = [\Pi_h^{\rm SP} (0, -p^0, 0)]_{1,3}.
\label{eq:initial_approx_value}
\end{align}
\par
In the following we use the constants~$\alpha_i$ defined in Lemma~$i$, $i=1,\ldots,4$, and the notation~$\mathbb{H}^2 \defeq H^2(\Omega)^2\times H^1(\Omega)\times H^2(\Omega)^{2 \times 2}$.
%
%%%%%%%%%%%%%%%%%%%%%%%%%%%%%%%%%%%%%%%%%
\begin{prop}\label{prop:eh_epsh_Gronwall}
Suppose that Hypotheses~\ref{hyp:w} and~\ref{hyp:regularity} hold and assume~\eqref{cond:dt_w_Jacobian}.
Let $M_0 \ge 1$ be a positive constant independent of $h$ and $\Delta t$.
Let $(\ucko_h, p_h, \Ccko_h)$ be the solution of scheme~\eqref{lin_scheme} with~\eqref{cond:if}. Suppose that for an $n\in\{1,\ldots,N_T\}$
\begin{align}
\|\Ccko_h^{n-1}\|_{0,\infty} \le M_0.
\label{ieq:M}
\end{align}
Then, there exist positive constants~$c_1$ and~$c_2$, dependent on $M_0$, $\nu$ and $\varepsilon$ but independent of $h$ and $\Delta t$, such that
\begin{align}
& \ol{D}_{\Delta t} \Bigl( \fz{1}{2} \|\ecko_h^n\|_0^2 + \fz{1}{2} \|\Ecko_h^n\|_0^2 + \fz{\nu\varepsilon}{64 \alpha_1^2 d^2 M_0^2} |\Ecko_h^n|_1^2 \Bigr) + \fz{\nu}{ 2\alpha_1^2}\|\ecko_h^n\|_1^2 + \delta_0 |\epsilon_h^n|_h^2 + \fz{\nu}{64 \alpha_1^2 d^2 M_0^2} \|\ol{D}_{\Delta t}\Ecko_h^n\|_0^2 \notag\\
& \quad \le c_1 \Bigl( \fz{1}{2} \|\ecko_h^{n-1}\|_0^2 + \fz{1}{2} \|\Ecko_h^{n-1}\|_0^2 + \fz{\nu\varepsilon}{64\alpha_1^2 d^2 M_0^2} |\Ecko_h^{n-1}|_1^2 + \fz{1}{2} \|\Ecko_h^n\|_0^2 \Bigr) \notag\\
&\qquad + c_2 \Bigl[ \Delta t \|(\ucko,\Ccko)\|_{Z^2(t^{n-1},t^n)}^2
+ h^2 \Bigl( \fz{1}{\Delta t} \|(\ucko,p,\Ccko)\|_{H^1(t^{n-1},t^n; \mathbb{H}^2)}^2 +1 \Bigr) \Bigr].
\label{ieq:eh_epsh_Gronwall_H1}
\end{align}
\end{prop}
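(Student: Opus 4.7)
The strategy is to test the two error equations \eqref{eq:error_up}--\eqref{eq:error_C} with carefully chosen combinations, sum them, and absorb the nonlinear coupling via Young's inequality. I would test \eqref{eq:error_up} with $(\vcko_h,q_h)=(\ecko_h^n,-\epsilon_h^n)$; by the definition of $\mathcal{A}_h$ this produces the clean dissipation $2\nu\|\D{\ecko_h^n}\|_0^2+\delta_0|\epsilon_h^n|_h^2$ on the left. Equation \eqref{eq:error_C} would be tested twice: first with $\Dcko_h=\Ecko_h^n$, generating the $L^2$-control of $\Ecko_h^n$ together with $\varepsilon|\Ecko_h^n|_1^2$; and second with $\Dcko_h=\theta\,\ol{D}_{\Delta t}\Ecko_h^n$ for the scaling $\theta\defeq \nu/(32\alpha_1^2 d^2 M_0^2)$. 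Using the identity $a_c(\Ecko_h^n,\ol{D}_{\Delta t}\Ecko_h^n)=\tfrac{1}{2}\ol{D}_{\Delta t}|\Ecko_h^n|_1^2+\tfrac{\Delta t}{2}|\ol{D}_{\Delta t}\Ecko_h^n|_1^2$, this third test produces on the left exactly $\theta\|\ol{D}_{\Delta t}\Ecko_h^n\|_0^2$ and $(\theta\varepsilon/2)\ol{D}_{\Delta t}|\Ecko_h^n|_1^2$, i.e.\ twice the quantities appearing in \eqref{ieq:eh_epsh_Gronwall_H1}, leaving exactly half-room for a later absorption.

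\textbf{Advective time increments.} For $(\gcko^n-\gcko^{n-1}\circ X_1^n,\gcko^n)/\Delta t$ with $\gcko\in\{\ecko_h,\Ecko_h\}$ the polarization identity plus Lemma~\ref{lem:composite_func}(i) in the form $\|\gcko^{n-1}\circ X_1^n\|_0^2\le(1+c_w\Delta t)\|\gcko^{n-1}\|_0^2$ converts the term into $\ol{D}_{\Delta t}\tfrac{1}{2}\|\gcko^n\|_0^2$ plus a non-negative square plus a Gronwall contribution bounded by $c_w\|\gcko^{n-1}\|_0^2$. For the scaled $\ol{D}_{\Delta t}\Ecko_h^n$ test I decompose $(\Ecko_h^n-\Ecko_h^{n-1}\circ X_1^n)/\Delta t=\ol{D}_{\Delta t}\Ecko_h^n+(\Ecko_h^{n-1}-\Ecko_h^{n-1}\circ X_1^n)/\Delta t$; by Lemma~\ref{lem:composite_func}(ii) the remainder is $O(\|\wcko\|_{0,\infty})|\Ecko_h^{n-1}|_1$, which Young's inequality turns into a $|\Ecko_h^{n-1}|_1^2$ Gronwall contribution plus a small fraction of $\theta\|\ol{D}_{\Delta t}\Ecko_h^n\|_0^2$ absorbed back to the left.

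\textbf{Estimation of the right-hand sides.} The residuals \eqref{defs:r_R} fall into three categories. The consistency errors $\rcko_{h1}^n,\Rcko_{h1}^n$ give $O(\Delta t)$ contributions controlled by $\|(\ucko,\Ccko)\|_{Z^2(t^{n-1},t^n)}$ via the standard first-order characteristic truncation estimate, as do the time differences $\Ccko^n-\Ccko^{n-1}$ appearing in $\rcko_{h3},\Rcko_{h6},\Rcko_{h10}$. The projection remainders $\etacko^n,\Xicko^n,\Xicko^{n-1}$ contribute $O(h)$ via Lemma~\ref{lem:estimates_SP_projection}, with the compositions $\etacko^{n-1}\circ X_1^n$ in $\rcko_{h2}$ and $\Xicko^{n-1}\circ X_1^n$ in $\Rcko_{h2}$ handled again by Lemma~\ref{lem:composite_func}. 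The nonlinear factors $\tr\Ccko^n$, $\tr\Ccko_h^{n-1}$, $\tr\hat{\Ccko}_h^{n-1}$ are $L^\infty$-bounded uniformly by $\|\Ccko\|_{C(L^\infty)}$, $M_0$ and $\|\Ccko\|_{C(L^\infty)}+ch\|\Ccko\|_{C(H^2)}$ (the last bounded for $h\le h_0$ by Lemma~\ref{linear_operators} applied to $\Pi_h\Ccko$ and Lemma~\ref{lem:estimates_SP_projection}). The remaining error-squared terms involving $\Ecko_h^{n-1},\Ecko_h^n$ (from $\rcko_{h3},\rcko_{h4},\Rcko_{h6},\Rcko_{h7},\Rcko_{h8},\Rcko_{h11}$) are handled by Young's inequality to produce the Gronwall quantities $\|\Ecko_h^{n-1}\|_0^2$ and $\|\Ecko_h^n\|_0^2$ on the right with a constant depending on $M_0$, plus parts absorbed into the LHS dissipation. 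The contribution from $\Rcko_{h7}$ tested by $\Ecko_h^n$ is what forces the $\|\Ecko_h^n\|_0^2$ term inside the $c_1$-factor in \eqref{ieq:eh_epsh_Gronwall_H1}.

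\textbf{Main obstacle.} The crucial coupling is $\theta\langle\Rcko_{h4}^n,\ol{D}_{\Delta t}\Ecko_h^n\rangle=2\theta((\nabla\ecko_h^n)\Ccko_h^{n-1},\ol{D}_{\Delta t}\Ecko_h^n)$. Using \eqref{ieq:M} I bound it by $2\theta\sqrt{d}\,M_0\,|\ecko_h^n|_1\|\ol{D}_{\Delta t}\Ecko_h^n\|_0$ and apply Young's inequality so that a $\theta/2$ fraction of $\|\ol{D}_{\Delta t}\Ecko_h^n\|_0^2$ is absorbed on the left; this choice is exactly what fixes $\theta=\nu/(32\alpha_1^2 d^2 M_0^2)$ and leaves the declared coefficient $\theta/2=\nu/(64\alpha_1^2 d^2 M_0^2)$ in \eqref{ieq:eh_epsh_Gronwall_H1}. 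The complementary piece then equals $2\theta d M_0^2 |\ecko_h^n|_1^2=\nu|\ecko_h^n|_1^2/(16\alpha_1^2 d)$, which is absorbed inside the Korn-converted velocity dissipation $(2\nu/\alpha_1^2)\|\ecko_h^n\|_1^2$ obtained from $2\nu\|\D{\ecko_h^n}\|_0^2$ via Lemma~\ref{lem:Korn}, leaving the declared $\nu/(2\alpha_1^2)\|\ecko_h^n\|_1^2$. This double absorption is the core of the argument and the only place where the hypothesis \eqref{ieq:M} is essential; it is what produces the characteristic $M_0^{-2}$ dependence of the coefficients in \eqref{ieq:eh_epsh_Gronwall_H1}. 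After collecting the Gronwall-type contributions into $c_1$ and the truncation and projection residuals into $c_2$, the estimate \eqref{ieq:eh_epsh_Gronwall_H1} follows.
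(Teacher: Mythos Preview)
Your proposal is correct and follows essentially the same route as the paper: the same three test functions $(\ecko_h^n,-\epsilon_h^n)$, $\Ecko_h^n$, and $\ol{D}_{\Delta t}\Ecko_h^n$ with the same scaling $\theta=\nu/(32\alpha_1^2 d^2 M_0^2)$, the same treatment of the advective increments via Lemma~\ref{lem:composite_func}, and the same Young-splitting of $\Rcko_{h4}^n$ that dictates $\theta$. Two small bookkeeping points: the paper uses $\|\Rcko_{h4}^n\|_0\le 2dM_0\|\ecko_h^n\|_1$ rather than $2\sqrt{d}M_0$, and the velocity dissipation has to absorb not only the $\Rcko_{h4}$ contribution from the $\ol{D}_{\Delta t}\Ecko_h^n$ test but also the ones from the $\Ecko_h^n$ test and from $\langle\rcko_h^n,\ecko_h^n\rangle$; once these are included the arithmetic lands exactly on $\nu/(2\alpha_1^2)$.
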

%%%%%%%%%%%%%%%%%%%%%%%%%%%%%%%%%%%%%%%%%
%
\par
For the proof we use the next lemma, which is proved in Appendix~\ref{subsec:proof_r_R}.
\begin{lemma}\label{lem:estimates_r_R}
Suppose Hypotheses~\ref{hyp:w} and~\ref{hyp:regularity} hold.
Let $n\in \{1,\ldots,N_T\}$ be any fixed number.
Then, under the condition~\eqref{cond:dt_w_Jacobian} it holds that
\begin{subequations}
\begin{align}
% r1
\| \rcko_{h1}^n \|_0 & \le c_w\sqrt{\Delta t} \|\ucko\|_{Z^2(t^{n-1},t^n)},
\label{ieq:r1}
\\
% r2
\| \rcko_{h2}^n \|_0 & \le \fz{c_w h}{\nu\sqrt{\Delta t}} \| (\ucko, p) \|_{H^1(t^{n-1},t^n; H^2\times H^1)},
\label{ieq:r2}
\\
% r3
\| \rcko_{h3}^n \|_{-1} & \le c_s \bigl( \|\Ecko_h^{n-1}\|_0 + \sqrt{\Delta t}\| \Ccko \|_{H^1(t^{n-1},t^n; L^2)} + h \bigr),
\label{ieq:r3}
\\
% r4
\| \rcko_{h4}^n \|_{-1} & \le c_s \|\Ccko_h^{n-1}\|_{0,\infty} \( \|\Ecko_h^n\|_0 + h \),
\label{ieq:r4}
\\
% R1
\| \Rcko_{h1}^n \|_0 & \le c_w \sqrt{\Delta t} \|\Ccko\|_{Z^2(t^{n-1},t^n)},
\label{ieq:R1}
\\
% R2
\| \Rcko_{h2}^n \|_0 & \le \fz{c_w h}{\sqrt{\Delta t}} \| \Ccko \|_{H^1(t^{n-1},t^n; H^2)},
\label{ieq:R2}
\\
% R3
\| \Rcko_{h3}^n \|_0 & \le c_s h,
\label{ieq:R3}
\\
% R4
\| \Rcko_{h4}^n \|_0 & \le 2d \|\Ccko_h^{n-1}\|_{0,\infty} \|\ecko_h^n\|_1,
\label{ieq:R4}
\\
% R5
\| \Rcko_{h5}^n \|_0 & \le c_s \|\Ccko_h^{n-1}\|_{0,\infty} h,
\label{ieq:R5}
\\
% R6
\| \Rcko_{h6}^n \|_0 & \le c_s \bigl( \|\Ecko_h^{n-1}\|_0 + \sqrt{\Delta t}\| \Ccko \|_{H^1(t^{n-1},t^n; L^2)} + h \bigr),
\label{ieq:R6}
\\
% R7
\| \Rcko_{h7}^n \|_0 & \le c_s \|\Ccko_h^{n-1}\|_{0,\infty}^2 ( \|\Ecko_h^n\|_0 + h),
\label{ieq:R7}
\\
% R8
\| \Rcko_{h8}^n \|_0 & \le c_s (\|\Ccko_h^{n-1}\|_{0,\infty}+1)\|\Ecko_h^{n-1}\|_0,
\label{ieq:R8}
\\
% R9
\| \Rcko_{h9}^n \|_0 & \le c_s h,
\label{ieq:R9}
\\
% R10
\| \Rcko_{h10}^n \|_0 & \le c_s \sqrt{\Delta t}\| \Ccko \|_{H^1(t^{n-1},t^n; L^2)},
\label{ieq:R10}
\\
% R11
\| \Rcko_{h11}^n \|_0 & \le c_s ( \|\Ecko_h^{n-1}\|_0 + \sqrt{\Delta t}\| \Ccko \|_{H^1(t^{n-1},t^n; L^2)} + h).
\label{ieq:R11}
\end{align}
\end{subequations}
\end{lemma}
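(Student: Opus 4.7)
The plan is to verify each of the seventeen residual bounds separately, grouping them by the type of error they encode. The residuals fall into three natural classes: (a) Lagrangian time-truncation errors for the material derivative, namely $\rcko_{h1}^n$ and $\Rcko_{h1}^n$; (b) time-differenced Stokes--Poisson projection errors composed with the characteristic map, namely $\rcko_{h2}^n$ and $\Rcko_{h2}^n$; and (c) consistency products built from differences of $\Ccko$ (or $\Ccko_h$) at consecutive time levels and from spatial projection errors, covering $\rcko_{h3}^n$, $\rcko_{h4}^n$ and $\Rcko_{h3}^n$ through $\Rcko_{h11}^n$. In every case the scheme is Cauchy--Schwarz (or H\"older) followed by one application of Lemma~\ref{linear_operators}, \ref{lem:estimates_SP_projection} or~\ref{lem:composite_func}, and at most one use of the fundamental theorem of calculus in time, invoking Hypothesis~\ref{hyp:regularity} to make all norms finite.

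For class~(a) I would use the standard Lagrangian identity
\[
\Dt{\gcko}(x,t^n)-\frac{\gcko^n(x)-\gcko^{n-1}(X_1^n(x))}{\Delta t}
=\frac{1}{\Delta t}\int_{t^{n-1}}^{t^n}(t-t^{n-1})\,\frac{\textnormal{D}^2\gcko}{\textnormal{D}t^2}(Y^n(x,t),t)\,dt,
\]
where $Y^n(x,t)=x-(t^n-t)\wcko^n(x)$ is the affine characteristic from $X_1^n(x)$ to $x$. Squaring, integrating in $x$, using Jensen's inequality in $t$ and applying Proposition~\ref{prop:RT_TU}(ii) for the change of variables $x\mapsto Y^n(x,t)$ yields $\|\rcko_{h1}^n\|_0\le c_w\sqrt{\Delta t}\,\|\ucko\|_{Z^2(t^{n-1},t^n)}$ and identically~\eqref{ieq:R1}; the detailed calculation is exactly that of~\cite{NT-2015-JSC} and I would cite it rather than reproduce it.

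For class~(b) the decomposition
$\etacko^n-\etacko^{n-1}\circ X_1^n
=(\etacko^n-\etacko^{n-1})+(\etacko^{n-1}-\etacko^{n-1}\circ X_1^n)$
reduces matters to two pieces. The first is bounded by $\sqrt{\Delta t}\,\|\partial_t\etacko\|_{L^2(t^{n-1},t^n;L^2)}$ via the fundamental theorem in time; differentiating~\eqref{StokesPoisson_projection} in $t$ (the projection is linear and bounded) and applying Lemma~\ref{lem:estimates_SP_projection}(ii) gives $\|\partial_t\etacko\|_0\le(\alpha_{31}/\nu)\,h\,\|\partial_t(\ucko,p)\|_{H^2\times H^1}$. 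The second is bounded by the second inequality in Lemma~\ref{lem:composite_func} applied to $\etacko^{n-1}$, then by Lemma~\ref{lem:estimates_SP_projection}(ii) at time $t^{n-1}$. Division by $\Delta t$ produces the advertised factor $h/(\nu\sqrt{\Delta t})$ in~\eqref{ieq:r2}, and the analogous calculation without $\nu$ (since $\Xicko$ does not pass through the Stokes projection) yields~\eqref{ieq:R2}.

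The class~(c) bounds are mechanical. In each case I would separate the three fundamental factors: a time-consistency term $\Ccko^n-\Ccko^{n-1}=\int_{t^{n-1}}^{t^n}\partial_t\Ccko\,dt$, bounded by $\sqrt{\Delta t}\,\|\Ccko\|_{H^1(t^{n-1},t^n;L^2)}$; a projection-error term $\|\Xicko^k\|_0\le\alpha_{32}h\|\Ccko^k\|_2$; and a pure-error factor $\|\Ecko_h^k\|_0$ or $\|\ecko_h^n\|_1$ left explicit on the right-hand side. Products are controlled in $L^\infty(\Omega)$ using Hypothesis~\ref{hyp:regularity} ($\ucko\in C(W^{1,\infty})$, $\Ccko\in C(L^\infty)$) and, for $\rcko_{h4}^n$, $\Rcko_{h7}^n$, $\Rcko_{h8}^n$, by keeping the discrete factor $\|\Ccko_h^{n-1}\|_{0,\infty}$ explicit. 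Estimate~\eqref{ieq:R4} is just H\"older with $|{\rm tr}((\nabla\ecko_h^n)\Ccko_h^{n-1}\Dcko_h^T)|\le d\,\|\Ccko_h^{n-1}\|_{0,\infty}|\nabla\ecko_h^n||\Dcko_h|$; estimate~\eqref{ieq:R3} uses $\varepsilon\le 1$ to absorb $\varepsilon$ into $c_s$.

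The main difficulty is not any single estimate but the bookkeeping: one must track carefully (i) which dual norm ($L^2$ versus $H^{-1}$) is required, since $\rcko_{h3}^n$ and $\rcko_{h4}^n$ pair against $\nabla\vcko_h$ and therefore only need the $L^2$ norm of their ``vector'' factors to bound them in $H^{-1}$; (ii) the $\nu$- and $\varepsilon$-dependence of constants, where $\nu$ appears in projection estimates via Lemma~\ref{lem:estimates_SP_projection}(ii) and must be visible in $\rcko_{h2}^n$ but not elsewhere, while $\varepsilon\in(0,1]$ can always be hidden inside $c_s$; and (iii) which occurrences of $\|\Ccko_h^{n-1}\|_{0,\infty}$ must be left explicit so that Proposition~\ref{prop:eh_epsh_Gronwall} can later close the argument through the a~posteriori bound~\eqref{ieq:M}. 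Once the splitting is fixed these three requirements dictate the exact form of the right-hand side of every estimate.
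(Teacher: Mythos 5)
Your overall strategy coincides with the paper's. The authors write out only \eqref{ieq:r3}, \eqref{ieq:r4}, \eqref{ieq:R4} and \eqref{ieq:R8} in the appendix, cite Part~I~\cite{LMNT-Peterlin_Oseen_Part_I} (equivalently the arguments of~\cite{NT-2015-JSC}) for \eqref{ieq:r1}, \eqref{ieq:r2} and \eqref{ieq:R2}, and declare the remaining estimates ``similarly obtained''; the four written-out bounds follow exactly your class-(c) recipe --- H\"older, the fundamental theorem of calculus in time for $\Ccko^n-\Ccko^{n-1}$, Lemma~\ref{lem:estimates_SP_projection}(ii) for $\Xicko$ and $\etacko$, the inequality $\|AB\|_0\le d\|A\|_{0,\infty}\|B\|_0$ with $\|\Ccko_h^{n-1}\|_{0,\infty}$ kept explicit, and the observation that the $H^{-1}$ norm of $\rcko_{h3}^n,\rcko_{h4}^n$ is controlled by the $L^2$ norm of the tensor factor because they pair against $\nabla\vcko_h$. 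Your classes (a) and (b) match the standard characteristics and projection arguments being delegated to the references.

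The one step your recipe does not produce is the uniform $L^\infty$ bound on the \emph{projected} tensor $\hat{\Ccko}_h^{n-1}$, which enters $\Rcko_{h8}^n$ through $\tr (\Ccko_h^{n-1}+\hat{\Ccko}_h^{n-1})$ and $\Rcko_{h9}^n$ through $\tr (\Ccko^{n-1}+\hat{\Ccko}_h^{n-1})$. This factor is neither covered by Hypothesis~\ref{hyp:regularity} (it is a discrete function, not the exact solution) nor left explicit on the right-hand sides of \eqref{ieq:R8} and \eqref{ieq:R9}, where it is absorbed into $c_s(\cdots+1)$ and $c_s$; so ``control the products in $L^\infty$ using Hypothesis~\ref{hyp:regularity}, keeping $\|\Ccko_h^{n-1}\|_{0,\infty}$ explicit'' does not by itself close these two estimates. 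The paper devotes a separate chain of inequalities to it: $\|\hat{\Ccko}_h^{n-1}\|_{0,\infty}\le\|\hat{\Ccko}_h^{n-1}-\Pi_h\Ccko^{n-1}\|_{0,\infty}+\|\Pi_h\Ccko^{n-1}\|_{0,\infty}$, then the inverse inequality~\eqref{ieq:inverse} together with $\|\hat{\Ccko}_h^{n-1}-\Ccko^{n-1}\|_1+\|\Ccko^{n-1}-\Pi_h\Ccko^{n-1}\|_1\le(\alpha_{32}+\alpha_{20})h\|\Ccko^{n-1}\|_2$ and the boundedness of $hD(h)$, yielding $\|\hat{\Ccko}_h^{n-1}\|_{0,\infty}\le c_s$ uniformly in $h$. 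The tools (Lemmas~\ref{linear_operators} and~\ref{lem:estimates_SP_projection}) are all on your list, and the same interpolation-plus-inverse-inequality mechanism reappears later in the induction proving Theorem~\ref{thm:error_estimates}, but the step has to be stated; without it \eqref{ieq:R8} and \eqref{ieq:R9} are not justified. Everything else in your proposal is correct and matches the paper.
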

\begin{proof}[Proof of Proposition~\ref{prop:eh_epsh_Gronwall}]
Substituting $(\ecko_h^n, -\epsilon_h^n)$ into $(\vcko_h, q_h)$ in~\eqref{eq:error_up} and noting that
\begin{align*}
\( \fz{\ecko_h^n - \ecko_h^{n-1} \circ X_1^n}{\Delta t}, \ecko_h^n \)
& \ge
\fz{1}{2\Delta t} \bigl( \|\ecko_h^n\|_0^2 - \|\ecko_h^{n-1} \circ X_1^n\|_0^2 \bigr)
\ge
\fz{1}{2\Delta t} \Bigl[ \|\ecko_h^n\|_0^2 - ( 1 + \alpha_{40} |\wcko^n|_{1,\infty} \Delta t )^2 \|\ecko_h^{n-1}\|_0^2 \Bigr] \\
& \ge \ol{D}_{\Delta t} \Bigl( \fz{1}{2} \|\ecko_h^n\|_0^2 \Bigr) - c_w \|\ecko_h^{n-1}\|_0^2, \\
\mathcal{A}_h\bigl( (\ecko_h^n, \epsilon_h^n), (\ecko_h^n, -\epsilon_h^n) \bigr) & \ge \fz{2\nu}{\alpha_1^2} \|\ecko_h^n\|_1^2 + \delta_0 |p_h^n|_h^2,\\
\lA \rcko_h^n, \ecko_h^n \rA & \le \|\rcko_h^n \|_{-1} \|\ecko_h^n\|_1
\le \fz{\alpha_1^2}{4\nu} \| \rcko_h^n \|_{-1}^2 + \fz{\nu}{ \alpha_1^2}\|\ecko_h^n\|_1^2,
\end{align*}
we have
\begin{align}
& \ol{D}_{\Delta t} \Bigl( \fz{1}{2} \|\ecko_h^n\|_0^2 \Bigr) + \fz{\nu}{ \alpha_1^2}\|\ecko_h^n\|_1^2 + \delta_0 |\epsilon_h^n|_h^2
\le \fz{ \alpha_1^2}{4\nu} \| \rcko_h^n \|_{-1}^2 + c_w \|\ecko_h^{n-1}\|_0^2.
\label{ieq:error_up_substitution}
\end{align}
Similarly, substituting $\Ecko_h^n$ and $\ol{D}_{\Delta t} \Ecko_h^n$ into $\Dcko_h$ in~\eqref{eq:error_C} and noting that
\begin{align*}
\( \fz{\Ecko_h^n - \Ecko_h^{n-1} \circ X_1^n}{\Delta t}, \Ecko_h^n\) & \ge \ol{D}_{\Delta t} \Bigl( \fz{1}{2} \|\Ecko_h^n\|_0^2 \Bigr) - c_w \|\Ecko_h^{n-1}\|_0^2,\\
\varepsilon a_c ( \Ecko_h^n, \Ecko_h^n ) & = \varepsilon |\Ecko_h^n|_1^2 \ge 0,\\
\lA \Rcko_h^n, \Ecko_h^n \rA
& \le \|\Rcko_h^n\|_0 \|\Ecko_h^n\|_0 
\le \sum_{i\in \{ 1,\ldots,11 \}\setminus \{4\}} \|\Rcko_{hi}^n\|_0 \|\Ecko_h^n\|_0 + \|\Rcko_{h4}^n\|_0 \|\Ecko_h^n\|_0 \\
& \le \sum_{i\in \{ 1,\ldots, 11 \}\setminus \{4\}} \Bigl( \fz{5}{2} \|\Rcko_{hi}^n\|_0^2 + \fz{1}{10} \|\Ecko_h^n\|_0^2 \Bigr) + 2 d M_0 \|\ecko_h^n\|_1 \|\Ecko_h^n\|_0 \quad \mbox{(by~\eqref{ieq:R4},\eqref{ieq:M})} \\
& \le \fz{5}{2} \sum_{i\in \{ 1,\ldots, 11 \}\setminus \{4\}} \|\Rcko_{hi}^n\|_0^2 + \|\Ecko_h^n\|_0^2 + \fz{\nu}{4\alpha_1^2} \|\ecko_h^n\|_1^2 + \fz{4 \alpha_1^2 d^2 M_0^2}{\nu}\|\Ecko_h^n\|_0^2 \\
& = \fz{5}{2} \sum_{i\in \{ 1,\ldots, 11 \}\setminus \{4\}} \|\Rcko_{hi}^n\|_0^2 + \Bigl( 1 + \fz{4 \alpha_1^2 d^2 M_0^2}{\nu} \Bigr) \|\Ecko_h^n\|_0^2 + \fz{\nu}{4\alpha_1^2} \|\ecko_h^n\|_1^2, \\
\biggl( \fz{\Ecko_h^n - \Ecko_h^{n-1} \circ X_1^n}{\Delta t}, \ol{D}_{\Delta t} \Ecko_h^n \biggr)
& = \biggl( \ol{D}_{\Delta t} \Ecko_h^n + \fz{\Ecko_h^{n-1} - \Ecko_h^{n-1} \circ X_1^n}{\Delta t}, \ol{D}_{\Delta t} \Ecko_h^n \biggr) \\
& \ge \|\ol{D}_{\Delta t} \Ecko_h^n\|_0^2 - \alpha_{41} \|\wcko^n\|_{0,\infty} |\Ecko_h^{n-1}|_1 \|\ol{D}_{\Delta t} \Ecko_h^n\|_0, \\
& \ge \|\ol{D}_{\Delta t} \Ecko_h^n\|_0^2 - c_w |\Ecko_h^{n-1}|_1^2 - \fz{1}{4} \|\ol{D}_{\Delta t} \Ecko_h^n\|_0^2, \\
& = \fz{3}{4}\|\ol{D}_{\Delta t} \Ecko_h^n\|_0^2 - c_w |\Ecko_h^{n-1}|_1^2, \\
\varepsilon a_c \bigl( \Ecko_h^n, \ol{D}_{\Delta t} \Ecko_h^n \bigr)
& \ge \ol{D}_{\Delta t} \Bigl( \fz{\varepsilon}{2} |\Ecko_h^n|_1^2 \Bigr), \\
\bigl\lA \Rcko_h^n, \ol{D}_{\Delta t} \Ecko_h^n \bigr\rA
& \le \|\Rcko_h^n\|_0 \|\ol{D}_{\Delta t} \Ecko_h^n\|_0 
\le \sum_{i\in \{ 1,\ldots, 11 \}\setminus \{4\}} \|\Rcko_{hi}^n\|_0 \|\ol{D}_{\Delta t} \Ecko_h^n\|_0 + \|\Rcko_{h4}^n\|_0 \|\ol{D}_{\Delta t} \Ecko_h^n\|_0 \\
& \le \sum_{i\in \{ 1,\ldots, 11 \}\setminus \{4\}} \Bigl( 20 \|\Rcko_{hi}^n\|_0^2 + \fz{1}{80} \|\ol{D}_{\Delta t} \Ecko_h^n\|_0^2 \Bigr) + 2 d M_0 \|\ecko_h^n\|_1 \|\ol{D}_{\Delta t} \Ecko_h^n\|_0 \quad \mbox{(by~\eqref{ieq:R4},\eqref{ieq:M})} \\
& \le 20 \sum_{i\in \{ 1,\ldots, 11 \}\setminus \{4\}} \|\Rcko_{hi}^n\|_0^2 + \fz{1}{8} \|\ol{D}_{\Delta t} \Ecko_h^n\|_0^2 + 8 d^2 M_0^2 \|\ecko_h^n\|_1^2 + \fz{1}{8} \|\ol{D}_{\Delta t} \Ecko_h^n\|_0^2 \\
& = 20 \sum_{i\in \{ 1,\ldots, 11 \}\setminus \{4\}} \|\Rcko_{hi}^n\|_0^2 + \fz{1}{4} \|\ol{D}_{\Delta t} \Ecko_h^n\|_0^2 + 8 d^2 M_0^2 \|\ecko_h^n\|_1^2,
\end{align*}
we have the following two inequalities,
\begin{subequations}
\begin{align}
& \ol{D}_{\Delta t} \Bigl( \fz{1}{2} \|\Ecko_h^n\|_0^2 \Bigr) \le \fz{5}{2} \sum_{i\in \{ 1,\ldots, 11 \}\setminus \{4\}} \|\Rcko_{hi}^n\|_0^2 + \Bigl( 1 + \fz{4 \alpha_1^2 d^2 M_0^2}{\nu} \Bigr) \|\Ecko_h^n\|_0^2 + c_w \|\Ecko_h^{n-1}\|_0^2 + \fz{\nu}{4\alpha_1^2} \|\ecko_h^n\|_1^2,
\label{ieq:error_C_substitution}\\
& \ol{D}_{\Delta t} \Bigl( \fz{\varepsilon}{2} |\Ecko_h^n|_1^2 \Bigr) + \fz{1}{2} \|\ol{D}_{\Delta t} \Ecko_h^n\|_0^2 \le 20 \sum_{i\in \{ 1,\ldots, 11 \}\setminus \{4\}} \|\Rcko_{hi}^n\|_0^2 + c_w |\Ecko_h^{n-1}|_1^2 + 8 d^2 M_0^2 \|\ecko_h^n\|_1^2.
\label{ieq:error_C_substitution_D_dt_E}
\end{align}
\end{subequations}
Lemma~\ref{lem:estimates_r_R}, \eqref{defs:r_R} and~\eqref{ieq:M} imply that
\begin{subequations}\label{ieqs:r_R_with_M}
\begin{align}
\|\rcko_h^n\|_{-1}^2
& \le c_{w,s} \bigl( M_0^2 \|\Ecko_h^n\|_0^2 + \|\Ecko_h^{n-1}\|_0^2 \bigr) \notag\\
& \qquad + \fz{c_{w,s}^\prime}{\nu} \Bigl[ \Delta t \|(\ucko,\Ccko)\|_{Z^2(t^{n-1},t^n)}^2
+ h^2 \Bigl( \fz{1}{\Delta t} \|(\ucko,p)\|_{H^1(t^{n-1},t^n; H^2\times H^1)}^2 +M_0^2 + 1 \Bigr) \Bigr],
\\
\sum_{i\in \{ 1,\ldots, 11 \}\setminus \{4\}} \|\Rcko_{hi}^n\|_0^2
& \le c_{w,s} \Bigl[ M_0^4 \|\Ecko_h^n\|_0^2 + (M_0^2+1) \|\Ecko_h^{n-1}\|_0^2 \Bigr] \notag\\
& \qquad + c_{w,s}^\prime \Bigl[ \Delta t \|\Ccko\|_{Z^2(t^{n-1},t^n)}^2 + h^2 \Bigl( \fz{1}{\Delta t} \|\Ccko\|_{H^1(t^{n-1},t^n; H^2)}^2 + M_0^4 + M_0^2 + 1 \Bigr) \Bigr].
\end{align}
\end{subequations}
Multiplying~\eqref{ieq:error_C_substitution_D_dt_E} by~$\nu / (32 \alpha_1^2 d^2 M_0^2)$, adding it and~\eqref{ieq:error_C_substitution} to~\eqref{ieq:error_up_substitution} and using~\eqref{ieqs:r_R_with_M}, we get
\begin{align*}
& \ol{D}_{\Delta t} \Bigl( \fz{1}{2} \|\ecko_h^n\|_0^2 + \fz{1}{2} \|\Ecko_h^n\|_0^2 + \fz{\nu\varepsilon}{64 \alpha_1^2 d^2 M_0^2} |\Ecko_h^n|_1^2 \Bigr) + \fz{\nu}{ 2\alpha_1^2}\|\ecko_h^n\|_1^2 + \delta_0 |\epsilon_h^n|_h^2 + \fz{\nu}{64 \alpha_1^2 d^2 M_0^2} \|\ol{D}_{\Delta t}\Ecko_h^n\|_0^2 \\
& \le p_1(M_0) \Bigl( \fz{1}{2} \|\ecko_h^{n-1}\|_0^2 + \fz{1}{2} \|\Ecko_h^{n-1}\|_0^2 + \fz{\nu\varepsilon}{64 \alpha_1^2 d^2 M_0^2} |\Ecko_h^{n-1}|_1^2 + \fz{1}{2} \|\Ecko_h^n\|_0^2 \Bigr) \\
&\quad + p_2(M_0) \Bigl[ \Delta t \|(\ucko,\Ccko)\|_{Z^2(t^{n-1},t^n)}^2
+ h^2 \Bigl( \fz{1}{\Delta t} \|(\ucko,p,\Ccko)\|_{H^1(t^{n-1},t^n; \mathbb{H}^2)}^2 +1 \Bigr) \Bigr],
\end{align*}
where $p_1(\xi) = p_1(\xi; \nu, \varepsilon)$ and $p_2(\xi) = p_2(\xi; \nu)$ are polynomials in~$\xi$ defined by
\begin{align}
p_1 : \quad & c_{w,s} \Bigl[ \fz{1}{\nu} (\xi^2 + 1) + \Bigl( 1 + \fz{\nu}{\xi^2} \Bigr) (\xi^4 + \xi^2 + 1) + \Bigl(1+\fz{\xi^2}{\nu} \Bigr) + \fz{1}{\varepsilon} \Bigr] 
\le \fz{c_{w,s}}{\nu\varepsilon} ( \xi^4 + 4\xi^2 + 6 )
\eqdef p_1(\xi; \nu, \varepsilon),
\label{def:p1} \\
p_2 : \quad & c_{w,s} \Bigl[ \fz{1}{\nu^2} ( \xi^2 + 1 ) + \Bigl( 1 + \fz{\nu}{\xi^2} \Bigr) (\xi^4 + \xi^2 + 1) \Bigr] 
\le \fz{c_{w,s}}{\nu^2} (\xi^4 + 3\xi^2 + 4) 
\eqdef p_2 (\xi; \nu). \notag
\end{align}
In the inequalities above the assumptions $\nu, \varepsilon \in (0, 1]$ and $M_0 \ge 1$ have been employed.
By taking $c_1 = p_1(M_0; \nu, \varepsilon)$ and $c_2 = p_2(M_0; \nu)$ we finally obtain~\eqref{ieq:eh_epsh_Gronwall_H1}.
\end{proof}
%
%
%
%
%%%%%%%%%%%%%%%%%%%%%%%%%%%%%%%%%%%%%%%%%%%%%%%%%%%%%%%%
\subsection{Proof of Theorem~\ref{thm:error_estimates}}
%%%%%%%%%%%%%%%%%%%%%%%%%%%%%%%%%%%%%%%%%%%%%%%%%%%%%%%%
We prove Theorem~\ref{thm:error_estimates} through three steps, where the function~$D(h)$ defined in~\eqref{Dofh} is often used.
\medskip\\
\textit{Step~1} (Setting $c_0$ and $h_0$):\
From~\eqref{cond:if} and~\eqref{eq:initial_approx_value}
we have
\begin{align}
\|\ecko_h^0\|_0
& \le \|\ucko_h^0 - \ucko^0\|_1 +\| \ucko^0 - \hat{\ucko}_h^0\|_1 \le 2 \fz{\alpha_{31}}{\nu} h \|(u, p)^0\|_{H^2 \times H^1} = \sqrt{2} c_I h
\label{ieq:c_I}
\end{align}
for $c_I \defeq (\sqrt{2} \alpha_{31}/\nu) \|(u, p)^0\|_{H^2 \times H^1}$.
The constants~$c_1$ and $c_2$ in Proposition~\ref{prop:eh_epsh_Gronwall} depend on $M_0$.
Now, we take $M_0 = \|\Ccko\|_{C(L^\infty)}+1$.
Then, $c_1$ and $c_2$ are fixed.
Let $c_3$ and $c_\ast$ be constants defined by
\begin{align}
c_3 \defeq
\exp\Bigl(\fz{3c_1T}{2}\Bigr) \max \Bigl\{ \sqrt{c_2} \|(\ucko,\Ccko)\|_{Z^2},  \sqrt{c_2} \bigl( \|(\ucko,p,\Ccko)\|_{H^1(\mathbb{H}^2)} + \sqrt{T} \bigr) + c_I\Bigr\}.
\label{def:c3}
\end{align}
and $c_\ast \defeq  c_3 \, (8 \alpha_1 d M_0 / \sqrt{\nu\varepsilon}\,)$.
We can choose sufficiently small positive constants $c_0$ and $h_0$ such that
\begin{subequations}\label{def:c0_h0}
\begin{align}
\alpha_{21} \bigl[ c_\ast \{ c_0 + h_0 D(h_0) \} + (\alpha_{20} + \alpha_{32}) h_0 D(h_0) \|\Ccko\|_{C(H^2)} \bigr] & \le 1,
\label{def:c0_h0_Linf}
\\
( \Delta t\le ) \ \ \  \fz{c_0}{D(h_0)} & \le \fz{1}{2c_1},
\label{def:c0_h0_Gronwall}\\
(\Delta t |\wcko|_{1,\infty}\le)\ \ \  \fz{c_0 |\wcko|_{1,\infty}}{D(h_0)} & \le \fz{1}{4},
\label{def:c0_h0_Jacobian}
\end{align}
\end{subequations}
since $h D(h)$ and~$1/D(h)$ tend to zero as $h$ tends to zero.
\par
Let $(h, \Delta t)$ be any pair satisfying~\eqref{condition:h_dt}.
Since condition \eqref{cond:dt_w_bijective} is satisfied, Proposition~\ref{prop:existence_uniqueness} ensures the existence and uniqueness of the solution~$(\ucko_h, p_h,$ $\Ccko_h) = \{ (\ucko_h^n, p_h^n, \Ccko_h^n) \}_{n=1}^{N_T} \subset V_h\times Q_h\times W_h$ of scheme~\eqref{lin_scheme} with \eqref{cond:if}.
\medskip\\
\textit{Step~2} (Induction):\
By induction we show that the following property P($n$) holds for $n\in\{0,\ldots,N_T\}$,
\begin{align*}
\mbox{P($n$):}
\left\{
\begin{aligned}
&
\begin{aligned}
&
{\rm (a)}~\fz{1}{2} \|\ecko_h^n\|_0^2 + \fz{1}{2} \|\Ecko_h^n\|_0^2 + \fz{\nu\varepsilon}{64 \alpha_1^2 d^2 M_0^2} |\Ecko_h^n|_1^2 
+\fz{\nu}{ 2\alpha_1^2}\|\ecko_h\|_{\ell^2_n(H^1)}^2
+\delta_0 |\epsilon_h|_{\ell^2_n(|\cdot|_h)}^2
+\fz{\nu}{64 \alpha_1^2 d^2 M_0^2} \|\ol{D}_{\Delta t}\Ecko_h\|_{\ell^2_n(L^2)}^2 \\
&
\quad \  \le \exp (3c_1 n\Delta t)
\Bigl[ \fz{1}{2}\|\ecko_h^0\|_0^2 + \fz{1}{2} \|\Ecko_h^0\|_0^2 + \fz{\nu\varepsilon}{64 \alpha_1^2 d^2 M_0^2} |\Ecko_h^0|_1^2 \\
& \qquad\qquad\qquad\qquad\qquad + c_2\Bigl\{ \Delta t^2 \|(\ucko,\Ccko)\|_{Z^2(0,t^n)}^2
+ h^2 \bigl( \|(\ucko,p,\Ccko)\|_{H^1(0,t^n; \mathbb{H}^2)}^2 + n\Delta t \bigr)
\Bigr\} \Bigr],
\end{aligned}
\\
& {\rm (b)}~\|\Ccko_h^n\|_{0,\infty} \le \|\Ccko\|_{C(L^\infty)}+1,
\end{aligned}
\right.
\end{align*}
where $\|\ecko_h\|_{\ell^2_n(H^1)} = |\epsilon_h|_{\ell^2_n(|\cdot|_h)} = \|\ol{D}_{\Delta t}\Ecko_h\|_{\ell^2_n(L^2)} = 0$ for $n=0$.
\par
P($n$)-(a) can be rewritten as
\begin{align}
x_n + \Delta t\sum_{i=1}^n y_i \le \exp( 3c_1n\Delta t ) \Bigl(x_0 + \Delta t \sum_{i=1}^n b_i \Bigr),
\label{ieq:proof_thm_P_n}
\end{align}
where
\begin{align*}
x_n & \defeq \fz{1}{2} \|\ecko_h^n\|_0^2 + \fz{1}{2} \|\Ecko_h^n\|_0^2 + \fz{\nu\varepsilon}{64 \alpha_1^2 d^2 M_0^2} |\Ecko_h^n|_1^2,
\qquad
y_i \defeq \fz{\nu}{2\alpha_1^2}\|\ecko_h^i\|_1^2 + \delta_0|\epsilon_h^i|_h^2 + \fz{\nu}{64 \alpha_1^2 d^2 M_0^2} \|\ol{D}_{\Delta t}\Ecko_h^i\|_0^2,\\
b_i & \defeq c_2\Bigl\{ \Delta t \|(\ucko,\Ccko)\|_{Z^2(t^{i-1},t^i)}^2 + h^2 \Bigl( \fz{1}{\Delta t} \|(\ucko,p,\Ccko)\|_{H^1(t^{i-1},t^i; \mathbb{H}^2)}^2 +1 \Bigr) \Bigr\}.
\end{align*}
\par
We firstly prove the general step in the induction.
Supposing that P($n-1$) holds true for an integer $n\in\{1,\ldots,N_T\}$, we prove that P($n$) also holds.
We prove P($n$)-(a).
Since \eqref{cond:dt_w_Jacobian} and~\eqref{ieq:M} with~$M_0 = \|\Ccko\|_{C(L^\infty)}+1~(\ge 1)$ are satisfied from~\eqref{def:c0_h0_Jacobian} and P($n-1$)-(b), respectively, we have~\eqref{ieq:eh_epsh_Gronwall_H1} from Proposition~\ref{prop:eh_epsh_Gronwall}.
The inequality~\eqref{ieq:eh_epsh_Gronwall_H1} implies that
\begin{align*}
\ol{D}_{\Delta t} x_n + y_n \le c_1 (x_n+x_{n-1}) + b_n,
\end{align*}
which leads to
\begin{align}
x_n + \Delta t y_n \le \exp (3c_1 \Delta t ) (x_{n-1} + \Delta t b_n)
\label{proof_thm1_1}
\end{align}
by $(1+c_1\Delta t)/(1-c_1\Delta t) \le (1+c_1\Delta t)(1+2c_1\Delta t) \le \exp (3c_1 \Delta t)$, where $c_1\Delta t \le 1/2$ from~\eqref{def:c0_h0_Gronwall}.
From~\eqref{proof_thm1_1} and P($n-1$)-(a)
we have
\begin{align*}
x_n + \Delta t\sum_{i=1}^n y_i
& \le \exp (3c_1\Delta t) (x_{n-1} + \Delta t b_n) + \Delta t\sum_{i=1}^{n-1} y_i
%\\
%&
\le \exp (3c_1\Delta t) \biggl( x_{n-1} + \Delta t\sum_{i=1}^{n-1} y_i + \Delta t b_n \biggr) \\
& \le \exp (3c_1\Delta t) \biggl[ \exp\bigl\{ 3c_1(n-1)\Delta t\bigr\} \biggl(x_0 + \Delta t \sum_{i=1}^{n-1}b_i \biggr) + \Delta t b_n \biggr]\\
& \le \exp (3c_1n\Delta t) \biggl(x_0 + \Delta t \sum_{i=1}^n b_i \biggr).
\end{align*}
Thus, we obtain~P($n$)-(a).
\par
For the proof of P($n$)-(b) we prepare the estimate of $\|\Ecko_h^n\|_1$.
We have
\begin{align}
x_0 = \fz{1}{2} \|\ecko_h^0\|_0^2 + \fz{1}{2} \|\Ecko_h^0\|_0^2 + \fz{\nu\varepsilon}{64 \alpha_1^2 d^2 M_0^2} |\Ecko_h^0|_1^2 = \fz{1}{2}\|\ecko_h^0\|_0^2 
\le c_I^2 h^2
\label{ieq:eh0_epsh0}
\end{align}
from~\eqref{ieq:c_I}.
P($n$)-(a) with~\eqref{ieq:eh0_epsh0} implies that
\begin{align}
&
\fz{1}{2} \|\ecko_h^n\|_0^2 + \fz{1}{2} \|\Ecko_h^n\|_0^2 + \fz{\nu\varepsilon}{64 \alpha_1^2 d^2 M_0^2} |\Ecko_h^n|_1^2
+\fz{\nu}{ 2\alpha_1^2}\|\ecko_h\|_{\ell^2_n(H^1)}^2
+\delta_0 |\epsilon_h|_{\ell^2_n(|\cdot|_h)}^2
+ \fz{\nu}{64 \alpha_1^2 d^2 M_0^2} \|\ol{D}_{\Delta t}\Ecko_h\|_{\ell^2_n(L^2)}^2
\notag \\
&\quad \le \exp (3c_1 T)
\Bigl[ c_I^2h^2
+ c_2\Bigl\{ \Delta t^2 \|(\ucko,\Ccko)\|_{Z^2}^2
+ h^2 \bigl( \|(\ucko,p,\Ccko)\|_{H^1(\mathbb{H}^2)}^2 + T \bigr)
\Bigr\} \Bigr] \notag\\
&\quad \le \exp (3c_1 T)
\Bigl[ c_2 \Delta t^2 \|(\ucko,\Ccko)\|_{Z^2}^2 + h^2 \Bigl\{ c_2 \bigl( \|(\ucko,p,\Ccko)\|_{H^1(\mathbb{H}^2)}^2 + T \bigr) + c_I^2 \Bigr\} \Bigr]
\notag\\
&\quad \le \bigl\{ c_3 (\Delta t +h) \bigr\}^2,
\label{ieq:c3}
\end{align}
which yields
\begin{align}
\|\Ecko_h^n\|_1 & \le \fz{8\alpha_1dM_0}{\sqrt{\nu\varepsilon}} c_3 (\Delta t + h) = c_\ast (\Delta t + h)
\label{ieq:Ehn_H1}
\end{align}
from $\nu\varepsilon/(64\alpha_1^2d^2M_0^2) \le 1/(64d^2) < 1/2$.
\par
We prove P($n$)-(b) as follows:
\begin{align}
\|\Ccko_h^n\|_{0,\infty}
&\le \|\Ccko_h^n-\Pi_h\Ccko^n\|_{0,\infty} + \|\Pi_h\Ccko^n\|_{0,\infty}
\le \alpha_{21} D(h) \|\Ccko_h^n-\Pi_h\Ccko^n\|_1 + \|\Pi_h\Ccko^n\|_{0,\infty} \notag\\
&\le \alpha_{21} D(h) \bigl( \|\Ccko_h^n-\hat{\Ccko}_h^n\|_1 + \|\hat{\Ccko}_h^n-\Ccko^n\|_1 + \|\Ccko^n-\Pi_h\Ccko^n\|_1 \bigr) + \|\Pi_h\Ccko^n\|_{0,\infty} \notag\\
&\le \alpha_{21} D(h) \bigl[ c_\ast (\Delta t+h) + \alpha_{32} h\|\Ccko^n\|_2 + \alpha_{20} h \|\Ccko^n\|_2 \bigr] + \|\Ccko^n\|_{0,\infty} \notag\\
&\le \alpha_{21} \bigl[ c_\ast \{ c_0 + h_0 D(h_0) \} + (\alpha_{20} + \alpha_{32}) h_0 D(h_0) \|\Ccko\|_{C(H^2)} \bigr] + \|\Ccko\|_{C(L^\infty)} \notag\\
&\le 1 + \|\Ccko\|_{C(L^\infty)},
\label{ieq:Ch_0_infty}
\end{align}
from~\eqref{ieq:Ehn_H1}, \eqref{condition:h_dt} and~\eqref{def:c0_h0_Linf}.
Therefore, P($n$) holds true.
\par
The proof of P($0$) is easier than that of the general step.
P($0$)-(a) obviously holds with equality.
P($0$)-(b) is obtained as follows:
\begin{align*}
\|\Ccko_h^0\|_{0,\infty} & \le \|\Ccko_h^0-\Pi_h\Ccko^0\|_{0,\infty} + \|\Pi_h\Ccko^0\|_{0,\infty}
\le \alpha_{21} D(h) ( \|\Ccko_h^0 - \Ccko^0 \|_1 + \|\Ccko^0 - \Pi_h\Ccko^0\|_1 ) + \|\Pi_h\Ccko^0\|_{0,\infty} \\
& \le \alpha_{21} (\alpha_{20}+\alpha_{32}) h D(h) \|\Ccko^0 \|_2 + \|\Ccko^0\|_{0,\infty} \\
& \le 1 + \|\Ccko\|_{C(L^\infty)}.
\end{align*}
Thus, the induction is completed.
\medskip\\
\textit{Step~3}:\
Finally we derive~\eqref{ieq:stability} and~\eqref{ieq:main_results}.
Since P($N_T$) holds true, we have~\eqref{ieq:stability} and
\begin{align}
\|\ecko_h\|_{\ell^\infty(L^2)\cap\ell^2(H^1)}, \  \
|\epsilon_h|_{\ell^2(|\cdot|_h)}, \  \
\|\ol{D}_{\Delta t}\Ecko_h\|_{\ell^2(L^2)} \le
c_{\nu,\varepsilon} c_{w,s} (\Delta t + h)
\label{ieq:bar_eh_H1_Deh_L2}
\end{align}
from~\eqref{ieq:c3}.
Combining \eqref{ieq:bar_eh_H1_Deh_L2} and the estimates
\begin{align*}
\|\ucko_h-\ucko\|_{\ell^\infty(L^2)} & \le \|\ecko_h\|_{\ell^\infty(L^2)} + \|\etacko\|_{\ell^\infty(L^2)} \le \|\ecko_h\|_{\ell^\infty(L^2)} + \fz{\alpha_{31}}{\nu} h \|(\ucko, p)\|_{C(H^2\times H^1)}, \\
\Bigl\| \ol{D}_{\Delta t}\Ccko_h^n - \prz{\Ccko^n}{t} \Bigr\|_0
& \le \|\ol{D}_{\Delta t}\Ecko_h^n\|_0 + \|\ol{D}_{\Delta t}\Xicko^n\|_0 + \Bigl\| \ol{D}_{\Delta t}\Ccko^n - \prz{\Ccko^n}{t} \Bigr\|_0 \notag\\
& \le \|\ol{D}_{\Delta t}\Ecko_h^n\|_0 + \fz{\alpha_{32} h}{\sqrt{\Delta t}}\|\Ccko\|_{H^1(t^{n-1},t^n; H^2)} + \sqrt{\fz{\Delta t}{3}} \Bigl\| \prz{^2\Ccko}{t^2} \Bigr\|_{L^2(t^{n-1},t^n; L^2)} ,
\end{align*}
we can obtain the first and the last inequalities of~\eqref{ieq:main_results} with a positive constant $c_\dagger$ independent of $h$ and $\Delta t$.
The other inequalities of~\eqref{ieq:main_results} are similarly proved by using~\eqref{ieq:Ehn_H1} and~\eqref{ieq:bar_eh_H1_Deh_L2}.
\qed
\begin{remark}
We note that the error constant behaves like ${\cal O}(\exp[cT/(\nu\varepsilon)])$ $(\nu, \varepsilon \downarrow 0)$ with respect to the viscosity $\nu$ and the elastic diffusion coefficient~$\varepsilon$, since the main contribution is the exponential part of $c_3$ in~\eqref{def:c3}, i.e., $\exp[3c_1T/2] = \exp[3 p_1(\|\Ccko\|_{C(L^\infty)} + 1; \nu, \varepsilon) T/2] = \mathcal{O}(\exp[cT/(\nu\varepsilon)])$, where \eqref{def:p1} is used for the last equality.
Although the dependency on~$\nu$ and~$\varepsilon$ of the coefficient is not good, it seems hard to avoid it.
Similar coefficient $\mathcal{O}(\exp[cT/\nu])$ appears in the estimate of the {\rm Navier}--{\rm Stokes} equations, \cite{Suli-1988,BouMadMetRaz-1997}.
As for the estimate independent of~$\nu$, we refer to~\cite{OlsReu-2003} for the {\rm Stokes} equations and to~\cite{deFruGar-2016} for the {\rm Oseen} equations.  
\end{remark}
%
%
%
%
%%%%%%%%%%%%%%%%%%%%%%%%%%%%%%%%%%%%%%%%%%%%%%%%%%%%%%%%
\subsection{A lemma for the proof of Theorem~\ref{thm:error_estimates_pressure}}
%%%%%%%%%%%%%%%%%%%%%%%%%%%%%%%%%%%%%%%%%%%%%%%%%%%%%%%%
In the proof of Theorem~\ref{thm:error_estimates_pressure} we use the next lemma.
\begin{lemma}\label{lem:rh_pressure}
Suppose that Hypotheses~\ref{hyp:w} and~\ref{hyp:regularity} and the inequalities~\eqref{ieq:stability} and~\eqref{ieq:main_results} hold.
Let $m\in \{1,\ldots,N_T\}$ be any fixed number.
Then, under the condition~\eqref{cond:dt_w_Jacobian} we have the following.
\begin{subequations}
\begin{align}
\Delta t \sum_{n=1}^m \lA \rcko_{h1}^n, \ol{D}_{\Delta t}\ecko_h^n \rA, \ \ \Delta t \sum_{n=1}^m \lA \rcko_{h2}^n, \ol{D}_{\Delta t}\ecko_h^n \rA
& \le \fz{\Delta t}{6} \sum_{n=1}^m \| \ol{D}_{\Delta t} \ecko_h^n \|_0^2 + c_{\nu,\varepsilon} c_{w,s} ( \Delta t^2 + h^2 ), 
\label{ieq:r1_r2_pressure} \\
\Delta t \sum_{n=1}^m \lA \rcko_{h3}^n, \ol{D}_{\Delta t}\ecko_h^n \rA, \ \  \Delta t \sum_{n=1}^m \lA \rcko_{h4}^n, \ol{D}_{\Delta t}\ecko_h^n \rA
& \le \fz{\nu}{4} \|\D{\ecko_h^m}\|_0^2 + c_{\nu,\varepsilon} c_{w,s} ( \Delta t^2 + h^2 ).
\label{ieq:r3_r4_pressure}
\end{align}
\end{subequations}
\end{lemma}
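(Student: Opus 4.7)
The plan is to treat the four pairings in two groups: in the cases $i\in\{1,2\}$ the residuals are controlled pointwise in $L^2$ by Lemma~\ref{lem:estimates_r_R}, whereas for $i\in\{3,4\}$ they are only controlled in $H^{-1}$ because their definition already contains $\nabla\vcko_h$, so a different device is needed.

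For $i\in\{1,2\}$ a single Cauchy--Schwarz--Young step gives $\lA\rcko_{hi}^n,\ol{D}_{\Delta t}\ecko_h^n\rA \le \tfrac{3}{2}\|\rcko_{hi}^n\|_0^2 + \tfrac{1}{6}\|\ol{D}_{\Delta t}\ecko_h^n\|_0^2$. Summing over $n$, multiplying by $\Delta t$ and applying \eqref{ieq:r1}--\eqref{ieq:r2} turns $\tfrac{3}{2}\Delta t\sum\|\rcko_{h1}^n\|_0^2$ into $\tfrac{3}{2}c_w^2\Delta t^2\|\ucko\|_{Z^2}^2$ and $\tfrac{3}{2}\Delta t\sum\|\rcko_{h2}^n\|_0^2$ into $\tfrac{3}{2}(c_w/\nu)^2 h^2\|(\ucko,p)\|_{H^1(H^2\times H^1)}^2$, both of which sit inside $c_{\nu,\varepsilon}c_{w,s}(\Delta t^2+h^2)$ by Hypothesis~\ref{hyp:regularity}. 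This proves \eqref{ieq:r1_r2_pressure}.

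For $i\in\{3,4\}$ write $\lA\rcko_{hi}^n,\vcko_h\rA = (A_i^n,\nabla\vcko_h)$ with $A_3^n=(\tr\Ccko^n)(\Ccko^n-\Ccko^{n-1}+\Xicko^{n-1}-\Ecko_h^{n-1})$ and $A_4^n=[\tr(\Xicko^n-\Ecko_h^n)]\Ccko_h^{n-1}$. A direct Cauchy--Schwarz would demand control of $\Delta t\sum\|\nabla\ol{D}_{\Delta t}\ecko_h^n\|_0^2$, which is unavailable. Since $\Delta t\,\ol{D}_{\Delta t}\ecko_h^n=\ecko_h^n-\ecko_h^{n-1}$, summation by parts in $n$ (Abel's identity) yields
\[
\Delta t\sum_{n=1}^m (A_i^n,\nabla\ol{D}_{\Delta t}\ecko_h^n) = (A_i^m,\nabla\ecko_h^m) - (A_i^1,\nabla\ecko_h^0) - \Delta t\sum_{n=1}^{m-1}(\ol{D}_{\Delta t}A_i^{n+1},\nabla\ecko_h^n).
\]
The term at $n=m$ is controlled via Korn (Lemma~\ref{lem:Korn}) and Young by $\tfrac{\nu}{4}\|\D{\ecko_h^m}\|_0^2 + c_\nu\|A_i^m\|_0^2$, and $\|A_i^m\|_0=\mathcal{O}(\Delta t+h)$ follows from the stability \eqref{ieq:stability}, the $\ell^\infty(H^1)$ control of $\Ecko_h$ in \eqref{ieq:main_results}, the Stokes--Poisson estimate of Lemma~\ref{lem:estimates_SP_projection} and the $C(H^2)$-regularity of $\Ccko$ in Hypothesis~\ref{hyp:regularity}. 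The term at $n=0$ is smaller thanks to $\|\ecko_h^0\|_1=\mathcal{O}(h)$, which comes from \eqref{cond:if} and Lemma~\ref{lem:estimates_SP_projection}. The interior sum is handled by Cauchy--Schwarz as $c_\nu\Delta t\sum\|\ol{D}_{\Delta t}A_i^{n+1}\|_0^2 + (\nu/(4\alpha_1^2))\|\ecko_h\|_{\ell^2_m(H^1)}^2$, whose second factor is already $\mathcal{O}((\Delta t+h)^2)$ by \eqref{ieq:main_results}.

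The main obstacle is bookkeeping $\ol{D}_{\Delta t}A_i^{n+1}$. A discrete Leibniz rule splits it into several summands; the delicate ones are (a) a second-order discrete time derivative of $\Ccko$ of the form $\ol{D}_{\Delta t}(\Ccko^n-\Ccko^{n-1})$, controlled via Taylor expansion and the $Z^2$-regularity from Hypothesis~\ref{hyp:regularity}; (b) $\ol{D}_{\Delta t}\Xicko^n$, controlled by applying $\ol{D}_{\Delta t}$ to the Stokes--Poisson projection identity and invoking Lemma~\ref{lem:estimates_SP_projection} for $\partial_t\Ccko$; and (c) $\ol{D}_{\Delta t}\Ecko_h^{n+1}$, whose $\ell^2(L^2)$ norm is exactly the $\mathcal{O}(\Delta t+h)$ quantity already bounded in \eqref{ieq:main_results}. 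Once all pieces are assembled, both endpoint and interior contributions fit into $\tfrac{\nu}{4}\|\D{\ecko_h^m}\|_0^2 + c_{\nu,\varepsilon}c_{w,s}(\Delta t^2+h^2)$, yielding \eqref{ieq:r3_r4_pressure}.
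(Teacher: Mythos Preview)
Your proposal is correct and follows essentially the same approach as the paper: a direct Cauchy--Schwarz--Young step for $i=1,2$ (with exactly the same constants $\tfrac32,\tfrac16$), and Abel summation by parts for $i=3,4$ followed by Korn/Young on the boundary term at $n=m$, the $\|\ecko_h^0\|_1=O(h)$ bound for the term at $n=0$, and a time-difference estimate of $A_i^{n+1}-A_i^n$ for the interior sum. The only cosmetic difference is that you apply Young to the interior sum immediately, producing $c_\nu\Delta t\sum\|\ol{D}_{\Delta t}A_i^{n+1}\|_0^2+\tfrac{\nu}{4\alpha_1^2}\|\ecko_h\|_{\ell^2_m(H^1)}^2$, whereas the paper carries the pairing $\|A_i^{n+1}-A_i^n\|_0\,\|\D{\ecko_h^n}\|_0$ through before splitting; this is equivalent, and the paper then uses the same three ingredients you list as (a)--(c) to close.
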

\begin{proof}
The inequalities~\eqref{ieq:r1_r2_pressure} are obtained by combining~\eqref{ieq:r1} and~\eqref{ieq:r2} with
\begin{align*}
\lA \rcko_{hi}^n, \ol{D}_{\Delta t}\ecko_h^n \rA 
& \le \| \rcko_{hi}^n \|_0 \| \ol{D}_{\Delta t}\ecko_h^n \|_0
\le \fz{3}{2} \| \rcko_{hi}^n \|_0^2 + \fz{1}{6}\|\ol{D}_{\Delta t}\ecko_h^n\|_0^2, \quad i=1, 2.
\end{align*}
\par
We prove~\eqref{ieq:r3_r4_pressure}.
For $i=3, 4$ we have
\begin{align}
& \Delta t \sum_{n=1}^m \lA \rcko_{hi}^n, \ol{D}_{\Delta t}\ecko_h^n \rA 
= \sum_{n=1}^m ( \rcko_{hi}^n, \nabla\ecko_h^n - \nabla\ecko_h^{n-1} ) 
= ( \rcko_{hi}^m, \nabla\ecko_h^m ) - \sum_{n=1}^{m-1} ( \rcko_{hi}^{n+1} - \rcko_{hi}^n, \nabla\ecko_h^n ) - ( \rcko_{hi}^1, \nabla\ecko_h^0 ) \notag\\
& \quad \le \alpha_1 \|\rcko_{hi}^m\|_{-1} \|\D{\ecko_h^m}\|_0 + \sum_{n=1}^{m-1} \| \rcko_{hi}^{n+1} - \rcko_{hi}^n\|_0 \|\ecko_h^n\|_1 + \|\rcko_{hi}^1\|_{-1} \|\ecko_h^0\|_1 \notag\\
& \quad \le \fz{\nu}{4} \|\D{\ecko_h^m}\|_0^2 + \fz{\alpha_1^2}{\nu}\|\rcko_{hi}^m\|_{-1}^2 + \alpha_1\sum_{n=1}^{m-1} \| \rcko_{hi}^{n+1} - \rcko_{hi}^n\|_0 \|\D{\ecko_h^n}\|_0 + \fz{1}{2}\|\rcko_{hi}^1\|_{-1}^2 + \fz{1}{2}\|\ecko_h^0\|_1^2 \notag\\
& \quad \le \fz{\nu}{4} \|\D{\ecko_h^m}\|_0^2 + \alpha_1\sum_{n=1}^{m-1} \| \rcko_{hi}^{n+1} - \rcko_{hi}^n\|_0 \|\D{\ecko_h^n}\|_0 + c_{\nu,\varepsilon} c_{w,s} \bigl( \Delta t^2 + h^2 \bigr) \qquad \mbox{(by~\eqref{ieq:r3},\eqref{ieq:r4},\eqref{ieq:c_I}, Thm.\ref{thm:error_estimates}).}
\label{ieq:r34_pressure_prep}
\end{align}
Applying H\" older's inequality, we have
\begin{align*}
& \bigl\| \tr\Ccko^{n+1} (\Ccko^{n+1}-\Ccko^n) - \tr\Ccko^n (\Ccko^n-\Ccko^{n-1}) \bigr\|_0
= \biggl\| \int_{t^n}^{t^{n+1}} \prz{}{t} \bigl\{ \tr\Ccko(t) \bigl[ \Ccko (t) -\Ccko (t-\Delta t) \bigr] \bigr\} dt \biggr\|_0 \\
& \quad \le \biggl\| \int_{t^n}^{t^{n+1}} \tr\prz{\Ccko}{t}(t) \bigl[ \Ccko (t) -\Ccko (t-\Delta t) \bigr] \, dt \biggr\|_0 + \biggl\| \int_{t^n}^{t^{n+1}} \tr\Ccko (t) \biggl[ \prz{\Ccko}{t} (t) - \prz{\Ccko}{t} (t-\Delta t) \biggr] \, dt \biggr\|_0 \\
& \quad = \biggl\| \int_{t^n}^{t^{n+1}} \tr\prz{\Ccko}{t}(t) \, dt \int_{t-\Delta t}^t \prz{\Ccko}{t} (s) \, ds \biggr\|_0 + \biggl\| \int_{t^n}^{t^{n+1}} \tr\Ccko (t) \, dt \int_{t-\Delta t}^t \prz{^2\Ccko}{t^2} (s) \, ds \biggr\|_0 \\
& \quad \le \int_{t^n}^{t^{n+1}} \biggl\| \tr\prz{\Ccko}{t}(t) \biggr\|_{0,4} dt \int_{t^{n-1}}^{t^{n+1}} \biggl\| \prz{\Ccko}{t} (s) \biggr\|_{0,4} ds + \int_{t^n}^{t^{n+1}} \|\tr\Ccko (t)\|_{0,\infty} \, dt \int_{t^{n-1}}^{t^{n+1}} \biggl\| \prz{^2\Ccko}{t^2} (s) \biggr\|_0 ds \\
& \quad \le \Delta t^{3/4} \biggl( \int_{t^n}^{t^{n+1}} \biggl\| \prz{\Ccko}{t}(t) \biggr\|_{0,4}^4 dt \biggr)^{1/4} (2\Delta t)^{3/4} \biggl( \int_{t^{n-1}}^{t^{n+1}} \biggl\| \prz{\Ccko}{t}(s) \biggr\|_{0,4}^4 ds \biggr)^{1/4} + d \Delta t \|\Ccko \|_{C(L^\infty)} \sqrt{2\Delta t} \|\Ccko\|_{H^2 (t^{n-1},t^{n+1}; L^2)} \\
& \quad \le c_s \Delta t^{3/2} \biggl( \biggl\| \prz{\Ccko}{t} \biggr\|_{L^4(t^{n-1},t^{n+1}; L^4)}^2 + \|\Ccko\|_{H^2 (t^{n-1},t^{n+1}; L^2)} \biggr), \\
& \| \tr\Ccko^{n+1}\Xicko^n - \tr\Ccko^n\Xicko^{n-1} \|_0 
\le \| (\tr\Ccko^{n+1} - \tr\Ccko^n) \Xicko^n \|_0 + \| \tr\Ccko^n (\Xicko^n - \Xicko^{n-1}) \|_0 \\
& \quad \le \| \tr\Ccko^{n+1} - \tr\Ccko^n \|_{0,3} \|\Xicko^n \|_{0,6} + \| \tr\Ccko^n \|_{0,3} \| \Xicko^n - \Xicko^{n-1} \|_{0,6} \\
& \quad \le \sqrt{\Delta t} \, \biggl\| \prz{(\tr\Ccko)}{t} \biggr\|_{L^2(t^n,t^{n+1}; L^3)} \|\Xicko^n \|_{0,6} + \| \Ccko^n \|_1 \sqrt{\Delta t} \, \biggl\| \prz{\Xicko}{t} \biggr\|_{L^2(t^{n-1},t^n; L^6)} \\
& \quad \le c \sqrt{\Delta t} \bigl( \| \Ccko \|_{H^1(t^n,t^{n+1}; H^1)} \|\Xicko^n \|_1 + \| \Ccko^n \|_1 \| \Xicko \|_{H^1(t^{n-1},t^n; H^1)} \bigr) \\
& \quad \le c \sqrt{\Delta t} ( \| \Ccko \|_{H^1(t^n,t^{n+1}; H^1)} \alpha_{32} h \|\Ccko^n \|_2 + \| \Ccko^n \|_1 \alpha_{32} h \| \Ccko \|_{H^1(t^{n-1},t^n; H^2)} \bigr) \\
& \quad \le c_s h \sqrt{\Delta t} \| \Ccko \|_{H^1(t^{n-1},t^{n+1}; H^2)}, \\
& \| \tr\Ccko^{n+1}\Ecko_h^n - \tr\Ccko^n\Ecko_h^{n-1} \|_0 
\le \| (\tr\Ccko^{n+1} - \tr\Ccko^n) \Ecko_h^n \|_0 + \| \tr\Ccko^n (\Ecko_h^n - \Ecko_h^{n-1}) \|_0 \\
& \quad \le \| \tr\Ccko^{n+1} - \tr\Ccko^n \|_{0,3} \| \Ecko_h^n \|_{0,6} + \| \tr\Ccko^n \|_{0,\infty} \| \Ecko_h^n - \Ecko_h^{n-1} \|_0 \\
& \quad \le \sqrt{\Delta t} \, \biggl\| \prz{(\tr\Ccko)}{t} \biggr\|_{L^2(t^n,t^{n+1};L^3)} \| \Ecko_h^n \|_1 + \| \tr\Ccko^n \|_{0,\infty} \Delta t \, \| \ol{D}_{\Delta t} \Ecko_h^n \|_0 \\
& \quad \le c_{\nu,\varepsilon} c_{w,s} \sqrt{\Delta t} \bigl[ (\Delta t + h) \| \Ccko \|_{H^1(t^n,t^{n+1};H^1)} + \sqrt{\Delta t} \, \| \ol{D}_{\Delta t} \Ecko_h^n \|_0 \bigr] \qquad \mbox{(by Thm.\ref{thm:error_estimates}).}
\end{align*}
Hence, $\| \rcko_{h3}^{n+1} - \rcko_{h3}^n\|_0$ is evaluated as follows.
\begin{align}
& \| \rcko_{h3}^{n+1} - \rcko_{h3}^n\|_0
 = \| (\tr \Ccko^{n+1})(\Ccko^{n+1}-\Ccko^n + \Xicko^n-\Ecko_h^n) - (\tr \Ccko^n)(\Ccko^n-\Ccko^{n-1} + \Xicko^{n-1}-\Ecko_h^{n-1}) \|_0 \notag\\
& \le \| (\tr \Ccko^{n+1})(\Ccko^{n+1}-\Ccko^n) - (\tr \Ccko^n)(\Ccko^n-\Ccko^{n-1}) \|_0 + \| (\tr \Ccko^{n+1}) \Xicko^n - (\tr \Ccko^n) \Xicko^{n-1} \|_0 \notag\\
& \quad + \| (\tr \Ccko^{n+1}) \Ecko_h^n - (\tr \Ccko^n) \Ecko_h^{n-1} \|_0 \notag\\
& \le c_{\nu,\varepsilon} c_{w,s} \sqrt{\Delta t} \biggl[ \Delta t \biggl\| \prz{\Ccko}{t} \biggr\|_{L^4(t^{n-1},t^{n+1}; L^4)}^2 
+ \Delta t \|\Ccko\|_{H^2 (t^{n-1},t^{n+1}; L^2)} 
+ (\Delta t + h) \| \Ccko \|_{H^1(t^{n-1},t^{n+1}; H^2)}
+ \sqrt{\Delta t} \, \| \ol{D}_{\Delta t} \Ecko_h^n \|_0 \biggr].
\label{ieq:r3_pressure_prep}
\end{align}
Combining~\eqref{ieq:r3_pressure_prep} with~\eqref{ieq:r34_pressure_prep} with~$i=3$, we get
\begin{align}
& \Delta t \sum_{n=1}^m \lA \rcko_{h3}^n, \ol{D}_{\Delta t}\ecko_h^n \rA \notag\\
& \le \fz{\nu}{4} \|\D{\ecko_h^m}\|_0^2 + c_{\nu,\varepsilon} c_{w,s} \biggl\{ \bigl( \Delta t^2 + h^2 \bigr) + \sum_{n=1}^{m-1} \biggl[ \Delta t \biggl\| \prz{\Ccko}{t} \biggr\|_{L^4(t^{n-1},t^{n+1}; L^4)}^2 + \Delta t \|\Ccko\|_{H^2 (t^{n-1},t^{n+1}; L^2)}
\notag\\
& \qquad\qquad\qquad\qquad\qquad\qquad\qquad\qquad\qquad\quad
+ (\Delta t + h) \| \Ccko \|_{H^1(t^{n-1},t^{n+1}; H^2)} + \sqrt{\Delta t} \, \| \ol{D}_{\Delta t} \Ecko_h^n \|_0 \biggr] \sqrt{\Delta t} \|\D{\ecko_h^n}\|_0 \biggr\} \notag\\
& \le \fz{\nu}{4} \|\D{\ecko_h^m}\|_0^2 + c_{\nu,\varepsilon} c_{w,s} \biggl\{ ( \Delta t^2 + h^2 ) + \|\ecko_h\|_{\ell^2(H^1)}^2 + \| \ol{D}_{\Delta t} \Ecko_h \|_{\ell^2(L^2)}^2 + 2 ( \Delta t^2 + h^2 ) \| \Ccko \|_{H^1(H^2)}^2 \notag\\
& \qquad\qquad\qquad\qquad\qquad\qquad\qquad\qquad\qquad\qquad\qquad\qquad\qquad\qquad\qquad\qquad
 + \Delta t^2 \biggl( \biggl\| \prz{\Ccko}{t} \biggr\|_{L^4(L^4)}^4 + \|\Ccko\|_{H^2 (L^2)}^2 \biggr) \biggr\} \notag\\
& \le \fz{\nu}{4} \|\D{\ecko_h^m}\|_0^2 + c_{\nu,\varepsilon}^\prime c_{w,s}^\prime (\Delta t^2 + h^2 ),
\label{ieq:r3_pressure_final}
\end{align}
where in the last inequality we have employed Theorem~\ref{thm:error_estimates} and the relation $[L^2(0,T; H^1(\Omega))\cap H^1(0,T; L^2(\Omega))] \hookrightarrow L^4(0,T;L^4(\Omega))$ yielding the inequality $\| \pz\Ccko/\pz t \|_{L^4(L^4)} \le c \| \pz\Ccko/\pz t \|_{L^2(H^1)\cap H^1(L^2)} \le c \| \Ccko \|_{H^1(H^1)\cap H^2(L^2)} \le c_s$.
Thus, the first inequality of~\eqref{ieq:r3_r4_pressure} is proved.
We prove the other inequality of~\eqref{ieq:r3_r4_pressure}.
For $\| \rcko_{h4}^{n+1} - \rcko_{h4}^n\|_0$ we have
\begin{align}
& \| \rcko_{h4}^{n+1} - \rcko_{h4}^n\|_0
= \| [\tr (\Xicko^{n+1}-\Ecko_h^{n+1})] \Ccko_h^n - [\tr (\Xicko^n-\Ecko_h^n)] \Ccko_h^n + [\tr (\Xicko^n-\Ecko_h^n)] \Ccko_h^n - [\tr (\Xicko^n-\Ecko_h^n)] \Ccko_h^{n-1} \|_0 \notag\\
& \quad = \Bigl\| [\tr (\Xicko^{n+1}- \Xicko^n)] \Ccko_h^n - [\tr (\Ecko_h^{n+1}-\Ecko_h^n)] \Ccko_h^n + [\tr (\Xicko^n-\Ecko_h^n)]\Bigl[ \Delta t \ol{D}_{\Delta t}\Ecko_h^n - (\Xicko^n - \Xicko^{n-1}) + (\Ccko^n - \Ccko^{n-1}) \Bigr] \Bigr\|_0 \notag\\
& \quad \le c \Bigl[ \|\Ccko_h^n\|_{0,\infty} (\| \Xicko^{n+1}- \Xicko^n \|_0 + \Delta t \| \ol{D}_{\Delta t} \Ecko_h^{n+1} \|_0) + \| \Xicko^n-\Ecko_h^n \|_{0,\infty} ( \Delta t \| \ol{D}_{\Delta t}\Ecko_h^n \|_0 + \| \Xicko^n - \Xicko^{n-1} \|_0) \notag\\
& \quad \qquad + (\| \Xicko^n \|_0 + \| \Ecko_h^n \|_0) \| \Ccko^n - \Ccko^{n-1} \|_{0,\infty} \Bigr] \notag\\
& \quad \le c \Bigl[ (2\|\Ccko\|_{C(L^\infty)}+1) \Bigl\{ \sqrt{\Delta t} \| \Xicko \|_{H^1(t^n,t^{n+1};L^2)} + \Delta t (\| \ol{D}_{\Delta t}\Ecko_h^{n+1} \|_0 + \| \ol{D}_{\Delta t}\Ecko_h^n \|_0 ) + \sqrt{\Delta t} \| \Xicko \|_{H^1(t^{n-1},t^n;L^2)} \Bigr\} \notag\\
& \quad\qquad + (\| \Xicko^n \|_0 + \| \Ecko_h^n \|_0) \sqrt{\Delta t} \| \Ccko \|_{H^1(t^{n-1},t^n;L^\infty)} \Bigr] \notag\\
& \quad \le c_{\nu,\varepsilon} c_{w,s} \sqrt{\Delta t} \bigl[ \alpha_{32} h \| \Ccko \|_{H^1(t^{n-1},t^{n+1};H^2)} + \sqrt{\Delta t} (\| \ol{D}_{\Delta t}\Ecko_h^{n+1} \|_0 + \| \ol{D}_{\Delta t}\Ecko_h^n \|_0 ) \notag\\
& \quad\qquad 
+ ( \alpha_{32} h \| \Ccko^n \|_2 + \| \Ecko_h^n \|_0 ) \| \Ccko \|_{H^1(t^{n-1},t^n;H^2)} \bigr] \notag\\
& \quad \le c_{\nu,\varepsilon}^\prime c_{w,s}^\prime \sqrt{\Delta t} \Bigl[ \sqrt{\Delta t} (\| \ol{D}_{\Delta t}\Ecko_h^{n+1} \|_0 + \| \ol{D}_{\Delta t}\Ecko_h^n \|_0) + (\Delta t + h) \| \Ccko \|_{H^1(t^{n-1},t^{n+1};H^2)} \Bigr],
\label{ieq:r4_pressure_prep}
\end{align}
where we have used the estimates,
\begin{align*}
& \Ccko_h^n-\Ccko_h^{n-1} = (\Ecko_h^n-\Xicko^n+\Ccko^n) - (\Ecko_h^{n-1}-\Xicko^{n-1}+\Ccko^{n-1}) =  \Delta t \ol{D}_{\Delta t}\Ecko_h^n - (\Xicko^n - \Xicko^{n-1}) + (\Ccko^n - \Ccko^{n-1}), \\
& \|\Xicko^n-\Ecko_h^n\|_{0,\infty} = \|\Ccko^n-\Ccko_h^n\|_{0,\infty} \le \|\Ccko^n\|_{0,\infty} + \|\Ccko_h^n\|_{0,\infty} \le 2 \|\Ccko\|_{C(L^\infty)} + 1 \qquad \mbox{(by Thm.\ref{thm:error_estimates}).}
\end{align*}
Combining~\eqref{ieq:r4_pressure_prep} with~\eqref{ieq:r34_pressure_prep} with $i=4$, we have
\begin{align}
& \Delta t \sum_{n=1}^m \lA \rcko_{h4}^n, \ol{D}_{\Delta t}\ecko_h^n \rA \notag\\
& \quad \le \fz{\nu}{4} \|\D{\ecko_h^m}\|_0^2 + c_{\nu,\varepsilon} c_{w,s} \biggl\{ \bigl( \Delta t^2 + h^2 \bigr) \notag\\
& \quad\qquad
+ \sum_{n=1}^{m-1}
\Bigl[ \sqrt{\Delta t} (\| \ol{D}_{\Delta t}\Ecko_h^{n+1} \|_0 + \| \ol{D}_{\Delta t}\Ecko_h^n \|_0) + (\Delta t + h) \| \Ccko \|_{H^1(t^{n-1},t^{n+1};H^2)} \Bigr] \sqrt{\Delta t} \|\D{\ecko_h^n}\|_0 \biggr\} \notag\\
& \quad \le \fz{\nu}{4} \|\D{\ecko_h^m}\|_0^2  
+ c_{\nu,\varepsilon} c_{w,s} \Bigl\{ ( \Delta t^2 + h^2 ) + \|\ecko_h\|_{\ell^2(H^1)}^2 
+ ( \Delta t^2 + h^2 ) \| \Ccko \|_{H^1(H^2)}^2 + \| \ol{D}_{\Delta t} \Ecko_h \|_{\ell^2(L^2)}^2 \Bigr\} \notag\\
& \quad \le \fz{\nu}{4} \|\D{\ecko_h^m}\|_0^2 + c_{\nu,\varepsilon}^\prime c_{w,s}^\prime (\Delta t^2 + h^2 ) \qquad \mbox{(by Thm.\ref{thm:error_estimates}),}
\label{ieq:r4_pressure_final}
\end{align}
which is the other inequality of~\eqref{ieq:r3_r4_pressure}.
\end{proof}
%
%
%
%
%%%%%%%%%%%%%%%%%%%%%%%%%%%%%%%%%%%%%%%%%%%%%%%%%%%%%%%%
\subsection{Proof of Theorem~\ref{thm:error_estimates_pressure}}
%%%%%%%%%%%%%%%%%%%%%%%%%%%%%%%%%%%%%%%%%%%%%%%%%%%%%%%%
Let $p_h^0 \defeq [\Pi_h^{\rm SP} (\ucko^0, 0, \Ccko^0)]_2$, which leads to $(\ucko_h^0, p_h^0, \Ccko_h^0) = [\Pi_h^{\rm SP} (\ucko^0, 0, \Ccko^0)]$.
Substituting $(\ol{D}_{\Delta t}\ecko_h^n, 0) \in V_h\times Q_h$ into $(\vcko_h, q_h)$ in~\eqref{eq:error_up} and using
\[
\fz{\ecko_h^n-\ecko_h^{n-1}\circ X_1^n}{\Delta t} = \ol{D}_{\Delta t}\ecko_h^n + \fz{\ecko_h^{n-1}-\ecko_h^{n-1}\circ X_1^n}{\Delta t},
\]
we have
\begin{align}
\| \ol{D}_{\Delta t} \ecko_h^n \|_0^2 + \nu a_u\bigl( e_h^n, \ol{D}_{\Delta t}\ecko_h^n \bigr) + b(\ol{D}_{\Delta t}\ecko_h^n, \epsilon_h^n) = \lA \rcko_h^n, \ol{D}_{\Delta t}\ecko_h^n \rA - \fz{1}{\Delta t} ( \ecko_h^{n-1}-\ecko_h^{n-1}\circ X_1^n, \ol{D}_{\Delta t}\ecko_h^n ).
\label{eq:proof_error_estimates_pressure_main_prep0}
\end{align}
On the other hand, setting $\vcko_h = \mathbf{0} \in V_h$ in~\eqref{eq:error_up}, we have for $n=1,\ldots, N_T$
\begin{align}
b(\ecko_h^n, q_h) - \mathcal{S}_h(\epsilon_h^n, q_h) = 0, \quad \forall q_h \in Q_h,
\label{eq:proof_error_estimates_pressure_b_S_prep}
\end{align}
From the definitions of $(\ucko_h^0, p_h^0, \Ccko_h^0)$ and $(\hat{\ucko}_h^0,\hat{p}_h^0,\hat{\Ccko}_h^0)$ we have
\begin{align*}
b(\ecko_h^0,q_h) - \mathcal{S}_h(\epsilon_h^0, q_h)
= b(\ucko_h^0, q_h) - \mathcal{S}_h(p_h^0, q_h) - \bigl\{ b(\hat{\ucko}_h^0, q_h) - \mathcal{S}_h(\hat{p}_h^0, q_h) \bigr\} 
= b(\ucko^0, q_h) - b(\ucko^0, q_h) =0,\quad \forall q_h \in Q_h,
\end{align*}
which implies that \eqref{eq:proof_error_estimates_pressure_b_S_prep} holds also for $n=0$.
Hence, we get for $n=1,\ldots, N_T$
\begin{align*}
b(\ol{D}_{\Delta t}\ecko_h^n, q_h) - \mathcal{S}_h(\ol{D}_{\Delta t}\epsilon_h^n, q_h) = 0, \quad \forall q_h \in Q_h,
\end{align*}
which yields
\begin{align}
b(\ol{D}_{\Delta t}\ecko_h^n, \epsilon_h^n) - \mathcal{S}_h(\ol{D}_{\Delta t}\epsilon_h^n, \epsilon_h^n) = 0
\label{eq:proof_error_estimates_pressure_b_S_final}
\end{align}
by setting $q_h = \epsilon_h^n \in Q_h$.
Subtracting~\eqref{eq:proof_error_estimates_pressure_b_S_final} from~\eqref{eq:proof_error_estimates_pressure_main_prep0}, we have for $n=1,\ldots, N_T$
\begin{align}
\| \ol{D}_{\Delta t} \ecko_h^n \|_0^2 + \nu a_u\bigl( e_h^n, \ol{D}_{\Delta t}\ecko_h^n \bigr) + \mathcal{S}_h(\ol{D}_{\Delta t}\epsilon_h^n, \epsilon_h^n) = \lA \rcko_h^n, \ol{D}_{\Delta t}\ecko_h^n \rA - \biggl( \fz{\ecko_h^{n-1}-\ecko_h^{n-1}\circ X_1^n}{\Delta t}, \ol{D}_{\Delta t}\ecko_h^n \biggr).
\label{eq:proof_error_estimates_pressure_main_prep1}
\end{align}
From the estimates,
\begin{align*}
\nu a_u(\ecko_h^n, \ol{D}_{\Delta t}\ecko_h^n) 
& = \ol{D}_{\Delta t} \Bigl( \fz{\nu}{2} a_u(\ecko_h^n, \ecko_h^n) \Bigr) + \fz{\nu\Delta t}{2} a_u( \ol{D}_{\Delta t}\ecko_h^n, \ol{D}_{\Delta t}\ecko_h^n ) 
\ge \ol{D}_{\Delta t} \bigl( \nu \|\D{\ecko_h^n}\|_0^2 \bigr), \\
\mathcal{S}_h ( \ol{D}_{\Delta t}\epsilon_h^n, \epsilon_h^n) & = \ol{D}_{\Delta t} \Bigl( \fz{1}{2} \mathcal{S}_h( \epsilon_h^n, \epsilon_h^n) \Bigr) +\fz{\Delta t}{2} \mathcal{S}_h( \ol{D}_{\Delta t} \epsilon_h^n, \ol{D}_{\Delta t}\epsilon_h^n )
\ge \ol{D}_{\Delta t} \Bigl( \fz{\delta_0}{2} |\epsilon_h^n|_h^2 \Bigr), \\
\Bigl| \fz{1}{\Delta t} ( \ecko_h^{n-1}-\ecko_h^{n-1}\circ X_1^n, \ol{D}_{\Delta t}\ecko_h^n ) \Bigr|
& \le \fz{1}{\Delta t} \| \ecko_h^{n-1} - \ecko_h^{n-1}\circ X_1^n\|_0 \|\ol{D}_{\Delta t}\ecko_h^n\|_0 
\le \alpha_{41} \| \wcko^n \|_{0,\infty} \|\ecko_h^{n-1}\|_1 \|\ol{D}_{\Delta t}\ecko_h^n\|_0 \notag\\
& \le \alpha_{41} \| \wcko^n \|_{0,\infty} \alpha_1 \|\D{\ecko_h^{n-1}}\|_0 \|\ol{D}_{\Delta t}\ecko_h^n\|_0 
\le c_w \|\D{\ecko_h^{n-1}}\|_0^2 + \fz{1}{6}\|\ol{D}_{\Delta t}\ecko_h^n\|_0^2, 
\end{align*}
the equality~\eqref{eq:proof_error_estimates_pressure_main_prep1} leads to, for $n=1,\ldots, N_T$,
\begin{align}
\ol{D}_{\Delta t} \Bigl( \nu \|\D{\ecko_h^n}\|_0^2 + \fz{\delta_0}{2} |\epsilon_h^n|_h^2 \Bigr)
+ \fz{5}{6} \| \ol{D}_{\Delta t} \ecko_h^n \|_0^2
\le \lA \rcko_h^n, \ol{D}_{\Delta t}\ecko_h^n \rA + c_w \|\D{\ecko_h^{n-1}}\|_0^2.
\label{eq:proof_error_estimates_pressure_main_prep2}
\end{align}
Let $m~(1\le m \le N_T)$ be any integer.
Summing up~\eqref{eq:proof_error_estimates_pressure_main_prep2} for $n=1,\ldots, m$ and using Lemma~\ref{lem:rh_pressure}, we have
\begin{align}
\fz{\nu}{2} \|\D{\ecko_h^m}\|_0^2 + \fz{\delta_0}{2} |\epsilon_h^m|_h^2
+ \fz{\Delta t}{2} \sum_{n=1}^m \| \ol{D}_{\Delta t} \ecko_h^n \|_0^2
\le \fz{c_w}{\nu} \Delta t \sum_{n=0}^{m-1} \nu \|\D{\ecko_h^n}\|_0^2 + c_{\nu,\varepsilon} c_{w,s} (\Delta t^2 + h^2).
\label{eq:proof_error_estimates_pressure_main_prep3}
\end{align}
From Lemma~\ref{lem:Gronwall} with
\begin{align*}
x_n & = \fz{\nu}{2} \|\D{\ecko_h^n}\|_0^2 + \fz{\delta_0}{2} |\epsilon_h^n|_h^2,
&
y_n & = \fz{1}{2} \| \ol{D}_{\Delta t} \ecko_h^n \|_0^2,
&
\alpha & = \fz{2c_w}{\nu},
&
\beta & = c_{\nu,\varepsilon} c_{w,s} (\Delta t^2 + h^2),
\end{align*}
we have
\begin{align}
\| \ol{D}_{\Delta t} \ecko_h \|_{\ell^2(L^2)} \le c_{\nu,\varepsilon}^\prime c_{w,s}^\prime (\Delta t + h).
\label{ieq:D_eh_L2}
\end{align}
The first inequality of~\eqref{ieq:main_results_pressure} is obtained by combining the inequality above with the estimate
\begin{align*}
\Bigl\| \ol{D}_{\Delta t}\ucko_h^n - \prz{\ucko^n}{t} \Bigr\|_0 
& \le \|\ol{D}_{\Delta t}\ecko_h^n\|_0 + \|\ol{D}_{\Delta t}\etacko^n\|_0 + \Bigl\| \ol{D}_{\Delta t}\ucko^n - \prz{\ucko^n}{t} \Bigr\|_0 \\
& \le \|\ol{D}_{\Delta t}\ecko_h^n\|_0 + \fz{\alpha_{31} h}{\nu\sqrt{\Delta t}}\|(\ucko,p)\|_{H^1(t^{n-1},t^n; H^2\times H^1)} + \sqrt{\fz{\Delta t}{3}} \Bigl\| \prz{^2\ucko}{t^2} \Bigr\|_{L^2(t^{n-1},t^n; L^2)}.
\end{align*}
\par
The other inequality of~\eqref{ieq:main_results_pressure} is proved as follows.
We have
\begin{align*}
\|\epsilon_h^n\|_0 & \le \|(\ecko_h^n,\epsilon_h^n)\|_{V\times Q} \le \fz{1}{\nu\alpha_{30}} \sup_{(\vcko_h,q_h)\in V_h\times Q_h} \fz{\mathcal{A}_h((\ucko_h^n,\epsilon_h^n), (\vcko_h,q_h))}{\|(\vcko_h,q_h)\|_{V\times Q}} \\
& = \fz{1}{\nu\alpha_{30}} \sup_{(\vcko_h,q_h)\in V_h\times Q_h} \fz{\lA \rcko_h^n, \vcko_h \rA - \fz{1}{\Delta t}(\ecko_h^n - \ecko_h^{n-1} \circ X_1^n, \vcko_h)}{\|(\vcko_h,q_h)\|_{V\times Q}} \\
& \le \fz{1}{\nu\alpha_{30}} \Bigl[ \|\rcko_{h1}^n\|_0 + \|\rcko_{h2}^n\|_0 + \|\rcko_{h3}^n\|_{-1} + \|\rcko_{h4}^n\|_{-1} + \|\ol{D}_{\Delta t}\ecko_h^n\|_0 + \fz{1}{\Delta t} \| \ecko_h^{n-1}-\ecko_h^{n-1}\circ X_1^n \|_0 \Bigr] \\
& \le \fz{c_s}{\nu\alpha_{30}} \Bigl[ \sqrt{\Delta t} \bigl( \|\ucko\|_{Z^2(t^{n-1},t^n)} + \|\Ccko\|_{H^1(t^{n-1},t^n; L^2)} \bigr) + \fz{h}{\nu \sqrt{\Delta t}} \| (\ucko, p) \|_{H^1(t^{n-1},t^n; H^2\times H^1)} \\
& \qquad\qquad + \|\ol{D}_{\Delta t}\ecko_h^n\|_0 + \| \ecko_h^{n-1} \|_1 + \|\Ecko_h^n\|_0 + \|\Ecko_h^{n-1}\|_0 + h \Bigr] \quad \mbox{(by~\eqref{ieq:r1}--\eqref{ieq:r4})},
\end{align*}
which implies the second inequality of~\eqref{ieq:main_results_pressure} from Theorem~\ref{thm:error_estimates}, \eqref{ieq:D_eh_L2} and the estimate
\begin{align*}
\phantom{MMMMa}
\|p_h-p\|_{\ell^2(L^2)} \le \|\epsilon_h\|_{\ell^2(L^2)} + \|\hat{p}_h-p\|_{\ell^2(L^2)} 
\le \|\epsilon_h\|_{\ell^2(L^2)} + \sqrt{T} \, \fz{\alpha_{31}}{\nu} h \|(\ucko,p)\|_{C(H^2\times H^1)}.
\phantom{MMMa}
\qed
\end{align*}
%
%
%
%
%
%%%%%%%%%%%%%%%%%%%%%%%%%%%%%%%%%%%%%%%%%%%%%
\section{Numerical experiments}\label{sec:numerics}
%%%%%%%%%%%%%%%%%%%%%%%%%%%%%%%%%%%%%%%%%%%%%
In this section we present numerical results by scheme~\eqref{lin_scheme} in order to confirm the theoretical convergence order.
We refer to \cite{Miz-2015} for the detailed description of the algorithm that has been used to perform the numerical simulations.
Further numerical experiments for linear scheme~\eqref{lin_scheme} as well as for the nonlinear scheme that has been discussed in our previous paper~\cite{LMNT-Peterlin_Oseen_Part_I}, Part I, can also be found in \cite{Miz-2015}.
\begin{example}
In problem~\eqref{model} we set $\Omega=(0, 1)^2$ and $T=0.5$, and we consider three cases for the pair of~$\nu$ and~$\varepsilon$.
Firstly we take both viscosities to be equal $10^{-1}$, i.e., $(\nu,\varepsilon)=(10^{-1}, 10^{-1})$.
Secondly, we consider the case $(\nu,\varepsilon)=(10^{-1}, 10^{-3})$, since the elastic stress viscosity is typically much smaller than the fluid viscosity.
Lastly, we set $(\nu,\varepsilon)=(1, 0)$.
Although the non-diffusive case $\varepsilon=0$ is out of the scope of theoretical analysis of this paper, we dare to carry out the computation to see the performance of scheme~\eqref{lin_scheme}.
The functions $\fko$, $\Fko$, $\ucko^0$ and $\Ccko^0$ are given such that the exact solution to \eqref{model} is as follows:
\begin{equation}\label{exact_solution}
\begin{aligned}
\mathbf{u} (x,t) &= \( \pd{\psi}{x_2} (x,t), -\pd{\psi}{x_1} (x,t) \),\quad p(x,t) = \sin \{ \pi (x_1 + 2 x_2 + t) \},\\
C_{11}(x,t) &=\frac{1}{2} \sin^2 (\pi x_1) \sin^2 (\pi x_2) \sin \{\pi(x_1+t)\} + 1,\\
C_{22}(x,t) &=\frac{1}{2} \sin^2 (\pi x_1) \sin^2 (\pi x_2) \sin \{\pi(x_2+t)\} + 1,\\
C_{12}(x,t) &=\frac{1}{2} \sin^2 (\pi x_1) \sin^2 (\pi x_2) \sin \{\pi(x_1+x_2+t)\} \ (=C_{21}(x,t)), \\
\psi(x,t) & \defeq \frac{\sqrt{3}}{2\pi} \sin^2 (\pi x_1) \sin^2 (\pi x_2) \sin \{ \pi (x_1+x_2 + t) \}.
\end{aligned}
\end{equation}
\end{example}
\par
Proposition~\ref{prop:existence_uniqueness} and Theorems~\ref{thm:error_estimates} and~\ref{thm:error_estimates_pressure} hold for any fixed positive constant~$\delta_0$.
Here we simply fix $\delta_0=1$.
Let $N$ be the division number of each side of the square domain.
We set $N=16, 32, 64, 128$ and $256$, and (re)define $h \defeq 1/N$.
The time increment is set as $\Delta t = h/2$.
To solve Example we employ scheme~\eqref{lin_scheme} with~$(\ucko_h^0, \Ccko_h^0) = [\Pi_h^{\rm SP}(\ucko^0, 0, \Ccko^0)]_{1,3}$.
\par
For the solution~$(\ucko_h, p_h, \Ccko_h)$ of scheme~\eqref{lin_scheme} and the exact solution~$(\ucko, p, \Ccko)$ given by \eqref{exact_solution} we define the relative errors~$Er\,i$, $i=1,\ldots, 6$, by
\begin{align*}
Er\,1 & =\frac{\|\ucko_h-\Pi_h\ucko\|_{\ell^\infty(L^2)}}{\|\Pi_h\ucko\|_{\ell^\infty(L^2)}}, &
Er\,2 & =\frac{\|\ucko_h-\Pi_h\ucko\|_{\ell^2(H^1)}}{\|\Pi_h\ucko\|_{\ell^2(H^1)}}, &
Er\,3 & =\frac{\|p_h-\Pi_hp\|_{\ell^2(L^2)}}{\|\Pi_hp\|_{\ell^2(L^2)}}, \\
Er\,4 & =\frac{|p_h-\Pi_hp|_{\ell^2(|\cdot|_h)}}{\|\Pi_hp\|_{\ell^2(L^2)}}, &
Er\,5 & =\frac{\|\Ccko_h-\Pi_h\Ccko\|_{\ell^\infty(L^2)}}{\|\Pi_h\Ccko\|_{\ell^\infty(L^2)}}, &
Er\,6 & =\frac{\|\Ccko_h-\Pi_h\Ccko\|_{\ell^2(H^1)}}{\|\Pi_h\Ccko\|_{\ell^2(H^1)}},
\end{align*}
where the same symbol~$\Pi_h$ has been employed as the scalar and vector versions of the {Lagrange} interpolation operator.
\par
The values of the errors and the slopes are presented in the tables below, while the corresponding figures show the graphs of the errors versus $h$ in logarithmic scale.
Table~\ref{table:symbols} summarizes the symbols used in the figures.
Tables~\&~Figures~\ref{table:first_case}, \ref{table:second_case} and~\ref{table:third_case} present the results for the cases~$(\nu, \varepsilon)=(10^{-1}, 10^{-1})$, $(10^{-1}, 10^{-3})$ and~$(1, 0)$, respectively.
\par
For all the cases it is confirmed that all the errors except~$Er\,6$ for~$(\nu,\varepsilon) = (1, 0)$ are almost of the first order in~$h$.
These results support Theorems~\ref{thm:error_estimates} and~\ref{thm:error_estimates_pressure}.
Since there is no diffusion for~$\Ccko$ in equation~\eqref{model_Ccko} in the case~$(\nu,\varepsilon) = (1, 0)$, it is natural that the slope of~$Er\,6$ does not attain~$1$.
While the theorems are not proved for~$\varepsilon = 0$, scheme~\eqref{lin_scheme} has worked well in the numerical experiments.
\begin{table}[!htbp]
\centering
\caption{Symbols used in the figures.}
\label{table:symbols}
\begin{tabular}{cccccccc}
\toprule
\multicolumn{2}{c}{$\ucko_h$} && \multicolumn{2}{c}{$p_h$} && \multicolumn{2}{c}{$\Ccko_h$} \\ \cmidrule{1-2} \cmidrule{4-5} \cmidrule{7-8}
{\LARGE $\circ$} & {\Large $\bullet$} && $\triangle$ & {\large $\blacktriangle$} && $\Box$ & $\blacksquare$ \\
$Er\,1$ & $Er\,2$ && $Er\,3$ & $Er\,4$ && $Er\,5$ & $Er\,6$ \\
\bottomrule
\end{tabular}
\end{table}
\begin{figure}[!htbp]
\centering
\begin{tabular}{rrrrr}
\toprule
 $h$  &  $Er\,1$ &  slope  & $Er\,2$ & slope  \\  \midrule
 $1/16$   & $6.29 \times 10^{-2}$ & --     & $7.94 \times 10^{-2}$ & --         \\
 $1/32$   & $2.21 \times 10^{-2}$ & $1.51$ & $3.14 \times 10^{-2}$ & $1.34$  \\
 $1/64$   & $8.98 \times 10^{-3}$ & $1.30$ & $1.32 \times 10^{-2}$ & $1.25$  \\
 $1/128$ & $4.07 \times 10^{-3}$ & $1.14$ & $6.35 \times 10^{-3}$ & $1.05$  \\
 $1/256$ & $1.95 \times 10^{-3}$ & $1.07$ & $2.86 \times 10^{-3}$ & $1.15$  \\
\cmidrule{1-5}
 $h$  &  $Er\,3$ &  slope  & $Er\,4$ & slope \\
\cmidrule{1-5}
 $1/16$   & $2.02 \times 10^{-1}$ & --         & $1.70 \times 10^{-1}$ & -- \\
 $1/32$   & $7.11 \times 10^{-2}$ & $1.50$ & $4.99 \times 10^{-2}$ & $1.77$ \\
 $1/64$   & $2.67 \times 10^{-2}$ & $1.41$ & $1.86 \times 10^{-2}$ & $1.42$ \\
 $1/128$ & $1.11 \times 10^{-2}$ & $1.27$ & $8.39 \times 10^{-3}$ & $1.15$\\
 $1/256$ & $5.01 \times 10^{-3}$ & $1.15$ & $3.69 \times 10^{-3}$ & $1.19$\\
\cmidrule{1-5}
 $h$       &  $Er\,5$ &  slope  &  $Er\,6$ &  slope     \\
\cmidrule{1-5}
 $1/16$   & $2.80 \times 10^{-2}$ & --         & $1.22 \times 10^{-1}$ & --  \\
 $1/32$   & $1.14 \times 10^{-2}$ & $1.30$ & $4.41 \times 10^{-2}$	 & $1.47$ \\
 $1/64$   & $4.90 \times 10^{-3}$ & $1.21$ & $1.72 \times 10^{-2}$	 & $1.35$ \\
 $1/128$ & $2.30 \times 10^{-3}$ & $1.09$ &  $7.64 \times 10^{-3}$ & $1.17$ \\
 $1/256$ & $1.11 \times 10^{-3}$ & $1.05$ &  $3.59 \times 10^{-3}$ & $1.09$ \\
\midrule
\vspace{9.2cm }
\end{tabular}\qquad
\includegraphics[height=9cm]{./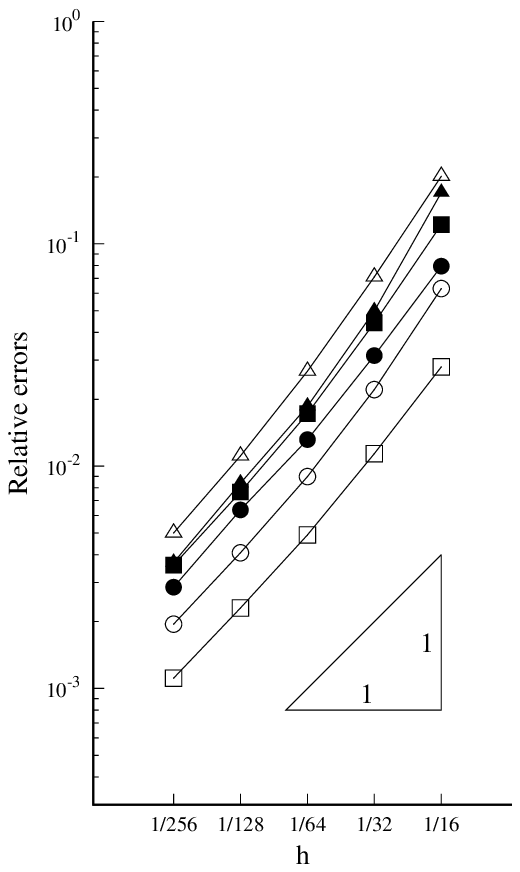}
\vspace{-9.2cm}
\captionsetup{labelformat=tableandpicture}
\caption{Errors and slopes for $(\nu,\varepsilon)=(10^{-1},10^{-1})$.}
\label{table:first_case}
\vspace*{5mm}
\end{figure}
\begin{figure}[!htbp]
\centering
\begin{tabular}{rrrrr}\toprule
  $h$       &   $Er\,1$                     &  slope  & $Er\,2$ & slope  \\  \midrule
  $1/16$  & $6.14 \times 10^{-2}$ & --     & $7.29 \times 10^{-2}$ & --         \\
 $1/32$   & $1.97 \times 10^{-2}$ & $1.64$ & $2.91 \times 10^{-2}$ & $1.33$  \\
 $1/64$   & $7.68 \times 10^{-3}$ & $1.36$ & $1.21 \times 10^{-2}$ & $1.26$  \\
 $1/128$ & $3.36 \times 10^{-3}$ & $1.19$ & $5.93 \times 10^{-3}$ & $1.03$  \\
 $1/256$ & $1.58 \times 10^{-3}$ & $1.09$ & $2.66 \times 10^{-3}$ & $1.15$  \\
\cmidrule{1-5}
 $h$  &  $Er\,3$ &  slope  & $Er\,4$ & slope \\
\cmidrule{1-5}
 $1/16$   & $2.50 \times 10^{-1}$ & --     & $2.06 \times 10^{-1}$ & -- \\
 $1/32$   & $9.14 \times 10^{-2}$ & $1.45$ & $6.08 \times 10^{-2}$ & $1.76$ \\
 $1/64$   & $3.31 \times 10^{-2}$ & $1.46$ & $2.11 \times 10^{-2}$ & $1.53$ \\
 $1/128$ & $1.28 \times 10^{-2}$ & $1.37$ & $8.78 \times 10^{-3}$ & $1.26$\\
 $1/256$ & $5.48 \times 10^{-3}$ & $1.23$ & $3.74 \times 10^{-3}$ & $1.23$\\
\cmidrule{1-5}
 $h$       &  $Er\,5$ &  slope  &  $Er\,6$ &  slope     \\
\cmidrule{1-5}
 $1/16$ & $5.01 \times 10^{-2}$ & --         & $5.38 \times 10^{-1}$ & --  \\
 $1/32$ & $1.92 \times 10^{-2}$ & $1.38$ & $2.54 \times 10^{-1}$ & $1.08$ \\
 $1/64$ & $7.53 \times 10^{-3}$ & $1.35$ & $1.05 \times 10^{-1}$ & $1.27$ \\
 $1/128$ & $3.28 \times 10^{-3}$ & $1.20$ &  $3.88 \times 10^{-2}$ & $1.44$ \\
 $1/256$ & $1.53 \times 10^{-3}$ & $1.10$ &  $1.35 \times 10^{-2}$ & $1.52$ \\
\midrule
\vspace{9.2cm }
\end{tabular}\qquad
\includegraphics[height=9cm]{./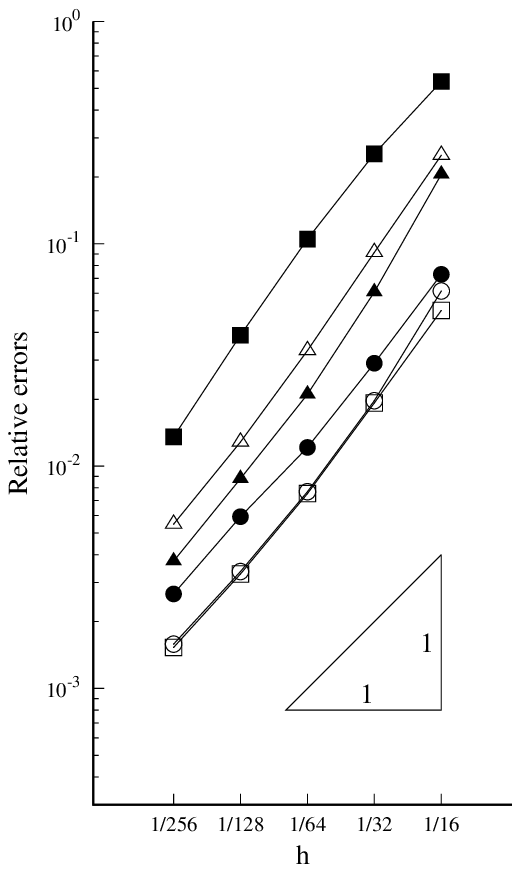}
\vspace{-9.2cm }
\captionsetup{labelformat=tableandpicture}
\caption{Errors and slopes for $(\nu,\varepsilon)=(10^{-1},10^{-3})$.}
\label{table:second_case}
\end{figure}
\begin{figure}[!htbp]
\centering
\begin{tabular}{rrrrr}
\toprule
 $h$  &  $Er\,1$ &  slope  & $Er\,2$ & slope  \\  \midrule
 $1/16$   & $4.51 \times 10^{-2}$ & --     & $5.83 \times 10^{-2}$ & --         \\
 $1/32$   & $1.42 \times 10^{-2}$ & $1.67$ & $2.36 \times 10^{-2}$ & $1.31$  \\
 $1/64$   & $4.53 \times 10^{-3}$ & $1.65$ & $9.85 \times 10^{-3}$ & $1.26$  \\
 $1/128$ & $1.52 \times 10^{-3}$ & $1.58$ & $4.89 \times 10^{-3}$ & $1.01$  \\
 $1/256$ & $5.72 \times 10^{-4}$ & $1.41$ & $2.10 \times 10^{-3}$ & $1.22$  \\
\cmidrule{1-5}
 $h$  &  $Er\,3$ &  slope  & $Er\,4$ & slope \\
\cmidrule{1-5}
 $1/16$   & $4.78 \times 10^{-1}$ & --     & $3.16 \times 10^{-1}$ & -- \\
 $1/32$   & $2.00 \times 10^{-1}$ & $1.26$ & $9.18 \times 10^{-2}$ & $1.79$ \\
 $1/64$   & $7.03 \times 10^{-2}$ & $1.51$ & $2.95 \times 10^{-2}$ & $1.64$ \\
 $1/128$ & $2.31 \times 10^{-2}$ & $1.60$ & $1.17 \times 10^{-2}$ & $1.33$\\
 $1/256$ & $8.04 \times 10^{-3}$ & $1.52$ & $5.01 \times 10^{-3}$ & $1.23$\\
\cmidrule{1-5}
 $h$       &  $Er\,5$ &  slope  &  $Er\,6$ &  slope     \\
\cmidrule{1-5}
 $1/16$ & $4.93 \times 10^{-2}$   & --         & $7.97 \times 10^{-1}$  & --  \\
 $1/32$ & $1.92 \times 10^{-2}$   & $1.36$ & $6.05 \times 10^{-1}$  & $0.40$ \\
 $1/64$ & $7.30 \times 10^{-3}$   & $1.39$ & $5.32 \times 10^{-1}$  & $0.19$ \\
 $1/128$ & $2.91 \times 10^{-3}$ & $1.33$ &  $4.04 \times 10^{-1}$ & $0.40$ \\
 $1/256$ & $1.24 \times 10^{-3}$ & $1.22$ &  $2.74 \times 10^{-1}$ & $0.56$ \\
\midrule
\vspace{9.2cm}
\end{tabular}\qquad
\includegraphics[height=9cm]{./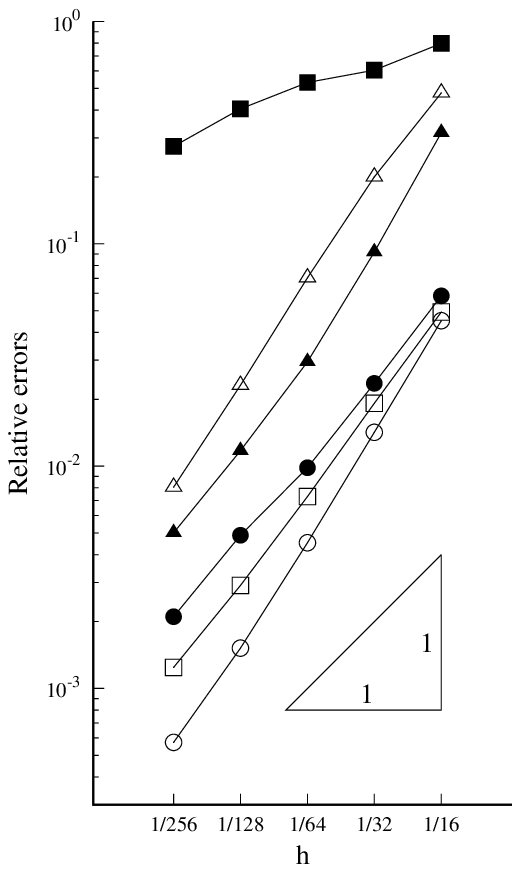}
\captionsetup{labelformat=tableandpicture}
\vspace{-9.2cm}
\caption{Errors and slopes for $(\nu,\varepsilon)=(1,0)$.}
\label{table:third_case}
\end{figure}
\begin{remark}
In the above the difference of $(\ucko_h, p_h, \Ccko_h)$ and $(\Pi_h\ucko, \Pi_hp, \Pi_h\Ccko)$ are computed.
For the difference of $(\ucko_h, p_h, \Ccko_h)$ and $(\ucko, p, \Ccko)$ see Appendix~\ref{subsec:appendix_numer}.
\end{remark}
%
%%%%%%%%%%%%%%%%%%%%%%%%%%%%%%%%%%%%%%%%%%%%%
\section{Conclusions}\label{sec:conclusions}
%%%%%%%%%%%%%%%%%%%%%%%%%%%%%%%%%%%%%%%%%%%%%
In this paper we have presented a linear stabilized {Lagrange}--{Galerkin} scheme~\eqref{lin_scheme} for the {Oseen}-type diffusive {Peterlin} viscoelastic model.
The scheme employs the conforming linear finite elements for all unknowns, velocity, pressure and conformation tensor, together with {Brezzi}--{Pitk\"{a}ranta}'s stabilization method.
In Theorems~\ref{thm:error_estimates} and~\ref{thm:error_estimates_pressure} we have established error estimates with the optimal convergence order under mild conditions, $\Delta t={\cal O}(1 / \sqrt{1+ |\log h|})$ for $d=2$ and $\Delta t ={\cal O}(\sqrt{h})$ for $d=3.$
They hold in the standard norms not only for the velocity and the conformation tensor but also for the pressure.
The theoretical convergence orders have been confirmed by two-dimensional numerical experiments.
\par
Although we have treated the stabilized scheme to reduce the number of degrees of freedom, the extension of the result to the combination of stable pairs for $(\ucko, p)$ and conventional elements for $\Ccko$ is straightforwards, e.g., P2/P1/P2 element.
In future we will extend this work to the Peterlin viscoelastic model with the nonlinear convective terms, and compare numerical results with other schemes in some benchmark problems.
\par
We recall that in our previous paper~\cite{LMNT-Peterlin_Oseen_Part_I}, Part~I, essentially unconditional stability and error estimates with the optimal convergence order were proved in two space dimensions.
There, our analysis allowed to include also the case $\varepsilon=0$.
%%%%%%%%%%%%%%%%%%%%%%%%%%%%%%%%%%%%%%%%%%%%%
%
%
%
%
%
%%%%%%%%%%%%%%%%%%%%%%%%%%%%%%%%%%%%%%%%%%%%%
\section*{Acknowledgements}
%%%%%%%%%%%%%%%%%%%%%%%%%%%%%%%%%%%%%%%%%%%%%
%\small
This research was supported by the German Science Agency (DFG) under the grants IRTG 1529 ``Mathematical Fluid Dynamics'' and TRR 146 ``Multiscale Simulation Methods for Soft Matter Systems'', and by the Japan Society for the Promotion of Science~(JSPS) under the Japanese-German Graduate Externship~``Mathematical Fluid Dynamics''.
H.M. was partially supported by the  German Academic Exchange Service (DAAD).
M.L.-M. and H.M. wish to thank B.~She (Czech Academy of Science, Prague) for fruitful discussion on the topic.
H.N. and M.T. are indebted to JSPS also for Grants-in-Aid for Young Scientists~(B), No.~26800091 and for Scientific Research~(C), No.~25400212 and Scientific Research~(S), No.~24224004, respectively.
{H.N.} is supported by Japan Science and Technology Agency~(JST), PRESTO.
%
%
%
%
%
%%%%%%%%%%%%%%%%%%%%%%%%%%%%%%%%%%%%%%%%%%%%%
\appendix
\renewcommand{\thesection}{A}
\setcounter{lemma}{0}
\renewcommand{\thelemma}{\thesection.\arabic{Lemma}}
\setcounter{remark}{0}
\renewcommand{\theremark}{\thesection.\arabic{Remark}}
\setcounter{figure}{0}
\renewcommand{\thefigure}{\thesection.\arabic{figure}}
\setcounter{equation}{0}
\makeatletter
  \renewcommand{\theequation}{%
  \thesection.\arabic{equation}}
  \@addtoreset{equation}{section}
\makeatother
%
%
%
%
%%%%%%%%%%%%%%%%%%%%%%%%%%%%%%%%%%%%%%%%%%%%%
\section*{Appendix}
%%%%%%%%%%%%%%%%%%%%%%%%%%%%%%%%%%%%%%%%%%%%%
\subsection{Proof of Lemma~\ref{lem:estimates_r_R}}\label{subsec:proof_r_R}
We prove only~\eqref{ieq:r3}, \eqref{ieq:r4}, \eqref{ieq:R4} and~\eqref{ieq:R8},
since \eqref{ieq:r1}, \eqref{ieq:r2} and~\eqref{ieq:R2} have been proved in Part~I~\cite{LMNT-Peterlin_Oseen_Part_I} and the other estimates are similarly obtained.
\par
\eqref{ieq:r3}, \eqref{ieq:r4} and~\eqref{ieq:R4} are obtained as follows:
\begin{align*}
\| \rcko_{h3}^n \|_{-1}
& \le \|(\tr \Ccko^n)(\Ccko^n-\Ccko^{n-1} + \Xicko^{n-1}-\Ecko_h^{n-1})\|_0
\le c_s \bigl( \|\Ccko^n-\Ccko^{n-1}\|_0 + \|\Xicko^{n-1}\|_0 + \|\Ecko_h^{n-1}\|_0 \bigr) \\
& \le c_s \bigl( \sqrt{\Delta t} \|\Ccko\|_{H^1(t^{n-1},t^n;L^2)} + \alpha_{32}h\|\Ccko^{n-1}\|_2 + \|\Ecko_h^{n-1}\|_0 \bigr) \\
& \le c_s^\prime \bigl( \|\Ecko_h^{n-1}\|_0 + \sqrt{\Delta t} \|\Ccko\|_{H^1(t^{n-1},t^n;L^2)} + h ),\\
\| \rcko_{h4}^n \|_{-1}
& \le \| [\tr (\Xicko^n-\Ecko_h^n)] \Ccko_h^{n-1} \|_0
\le c \|\Ccko_h^{n-1}\|_{0,\infty} \| \tr (\Xicko^n-\Ecko_h^n) \|_0\\
& \le c^\prime \|\Ccko_h^{n-1}\|_{0,\infty} (\| \Xicko^n \|_0 + \|\Ecko_h^n\|_0)
 \le c^\prime \|\Ccko_h^{n-1}\|_{0,\infty} ( \alpha_{32} h \| \Ccko^n \|_2 + \|\Ecko_h^n\|_0)\\
& \le c_s \|\Ccko_h^{n-1}\|_{0,\infty} ( \|\Ecko_h^n\|_0 + h),\\
\| \Rcko_{h4}^n \|_0
& = 2 \|(\nabla \ecko_h^n) \Ccko_h^{n-1}\|_0
 \le 2d \|\Ccko_h^{n-1}\|_{0,\infty} \|\nabla \ecko_h^n\|_0
 \le 2d \|\Ccko_h^{n-1}\|_{0,\infty} \|\ecko_h^n\|_1,
\end{align*}
where in the estimate of~$\| \Rcko_{h4}^n \|_0$ the inequality~$\|AB\|_0\le d \|A\|_{0,\infty}\|B\|_0$ for $A\in L^\infty(\Omega)^{d\times d}$ and~$B\in L^2(\Omega)^{d\times d}$ has been employed.
\par
Finally, \eqref{ieq:R8} is proved as
\begin{align*}
\| \Rcko_{h8}^n \|_0
& = \|[\tr (\Ccko_h^{n-1}+\hat{\Ccko}_h^{n-1})] (\tr \Ecko_h^{n-1}) \Ccko^n\|_0
\le c_s (\|\Ccko_h^{n-1}\|_{0,\infty} + \|\hat{\Ccko}_h^{n-1}\|_{0,\infty}) \| \Ecko_h^{n-1} \|_0
\\
& \le c_s^\prime (\|\Ccko_h^{n-1}\|_{0,\infty} + 1) \| \Ecko_h^{n-1} \|_0,
\end{align*}
where for the last inequality we have used the boundedness of~$\| \hat{\Ccko}_h^{n-1} \|_{0,\infty}$ obtained by the estimate
\begin{align*}
\| \hat{\Ccko}_h^{n-1} \|_{0,\infty}
&\le \| \hat{\Ccko}_h^{n-1} - \Pi_h \Ccko^{n-1} \|_{0,\infty} + \| \Pi_h \Ccko^{n-1} \|_{0,\infty}
\le \alpha_{21} D(h) \| \hat{\Ccko}_h^{n-1} - \Pi_h \Ccko^{n-1} \|_1 + \| \Ccko \|_{C(L^\infty)}
\\
&\le \alpha_{21} D(h) \bigl( \| \hat{\Ccko}_h^{n-1} - \Ccko^{n-1} \|_1 + \| \Ccko^{n-1} - \Pi_h \Ccko^{n-1} \|_1 \bigr) + \| \Ccko \|_{C(L^\infty)} \\
&\le \alpha_{21} D(h) \bigl( \alpha_{32}h \| \Ccko^{n-1} \|_2 + \alpha_{20}h\| \Ccko^{n-1} \|_2 \bigr) + \| \Ccko \|_{C(L^\infty)} \\
&\le \alpha_{21} hD(h) ( \alpha_{20} + \alpha_{32} ) \| \Ccko \|_{C(H^2)} + \| \Ccko \|_{C(L^\infty)} \\
\phantom{MMMMMMM}
&\le \alpha_{21} h_1 D(h_1) ( \alpha_{20} + \alpha_{32} ) \| \Ccko \|_{C(H^2)} + \| \Ccko \|_{C(L^\infty)} \le c_s.
\phantom{MMMMMMMMMMMMMM}
\qed
\end{align*}
\subsection{Difference of $(\ucko_h, p_h, \Ccko_h)$ and $(\ucko, p, \Ccko)$ in Example.}\label{subsec:appendix_numer}
In Section~\ref{sec:numerics} we have computed the difference of $(\ucko_h, p_h, \Ccko_h)$ and $(\Pi_h\ucko, \Pi_hp, \Pi_h\Ccko)$.
Here, we give additional information on the error between $(\ucko_h, p_h, \Ccko_h)$ and $(\ucko, p, \Ccko)$.
We introduce a numerical integration formula of degree five with seven quadrature points for each triangle, 
and we denote the norm derived by the formula by adding the prime to the corresponding norm, 
\begin{align*}
\| \psi \|_{L^2(\Omega)'}
& \defeq 
\biggl\{
\sum_{K \in \mathcal{T}_h}  \mbox{meas}(K)  \sum_{i=1}^7 |\psi (a_i^{K})|^2 \, w_i \,  
\biggr\}^{1/2}
\approx
\| \psi \|_{L^2(\Omega)},
\end{align*}
where $\{( a_i^K, w_i) \}$ is a set of pairs of quadrature point and weight on $K \in \mathcal{T}_h$.  
When $\psi$ is a function in P1 finite element space, it holds that $\| \psi \|_{L^{2}(\Omega){'}} = \| \psi \|_{L^2(\Omega)}$. 
We abbreviate $\| \psi \|_{L^2(\Omega)'} $ as $\| \psi \|_{ L^{2}{'} } $. 
In the following the symbol $\prime$ means that the numerical integration is used in place of the exact integration.  
We define the relative errors~$Er\,k^\prime$, $k=1,\ldots, 6$, by
\begin{align*}
Er\,1^{\prime} & \defeq 
\fz{\|\ucko_h - \ucko\|_{\ell^\infty(L^{2}{'})}}{\|\Pi_h\ucko\|_{\ell^\infty(L^2)}}, 
&
Er\,2^{\prime} & \defeq 
\fz{\|\ucko_h - \ucko\|_{\ell^2(H^{1}{'})}}{\|\Pi_h\ucko\|_{\ell^2(H^1)}}, 
&
Er\,3^{\prime} & \defeq 
\fz{\|p_h - p\|_{\ell^2(L^{2}{'})}}{\|\Pi_h p\|_{\ell^2(L^2)}}, \\
Er\,4^{\prime} & \defeq 
\fz{|p_h - p|_{\ell^2(|\cdot|_h^{'})}}{\|\Pi_h p\|_{\ell^2(L^2)}}, 
&
Er\,5^{\prime} & \defeq 
\fz{\|\Ccko_h - \Ccko\|_{\ell^\infty(L^{2}{'})}}{\|\Pi_h \Ccko\|_{\ell^\infty(L^2)}}, 
&
Er\,6^{\prime} & \defeq 
\fz{\|\Ccko_h - \Ccko\|_{\ell^2(H^{1}{'})}}{\|\Pi_h\Ccko\|_{\ell^2(H^1)}}.
\end{align*}
\par
We deal with the case $(\nu, \varepsilon) = (10^{-1}, 10^{-1})$.
Table~\ref{table:Er_prime} shows the comparison of the values of~$Er\,k^\prime$ with those of~$Er\,k$, which reflects that convergence orders of $Er\,k$ are almost same with those of~$Er\,k^\prime$, though the values of~$Er\,2^\prime$ are about three to four times larger than $Er\,2$.
Therefore, the computation of the difference of $(\ucko_h, p_h, \Ccko_h)$ and $(\Pi_h \ucko, \Pi_h p, \Pi_h \Ccko)$ is sufficient in order to observe the behavior of convergence of $(\ucko_h, p_h, \Ccko_h)$ to $(\ucko, p, \Ccko)$.
\begin{table}[!htbp]
\centering
\small
\caption{Comparison of $Er\,k$ with $Er\,k^\prime$, $k=1,\ldots,6$, for $(\nu,\varepsilon)=(10^{-1},10^{-1})$.}
\label{table:Er_prime}
\begin{tabular}{rrrrrcrrrr}
\toprule
 $h$  &  $Er\,1$ & slope & $Er\,1^\prime$ & slope & \quad & $Er\,2$ &  slope  & $Er\,2^\prime$ & slope \\  \midrule
 $1/16$   & $6.29 \times 10^{-2}$ & --         & $8.15 \times 10^{-2}$ & --         & \quad & $7.94 \times 10^{-2}$ & --         & $1.94 \times 10^{-1}$ & --         \\
 $1/32$   & $2.21 \times 10^{-2}$ & $1.51$ & $2.68 \times 10^{-2}$ & $1.60$ & \quad & $3.14 \times 10^{-2}$ & $1.34$ & $9.20 \times 10^{-2}$ & $1.08$ \\
 $1/64$   & $8.98 \times 10^{-3}$ & $1.30$ & $1.02 \times 10^{-2}$ & $1.39$ & \quad & $1.32 \times 10^{-2}$ & $1.25$ & $4.54 \times 10^{-2}$ & $1.02$ \\
 $1/128$ & $4.07 \times 10^{-3}$ & $1.14$ & $4.40 \times 10^{-3}$ & $1.22$ & \quad & $6.35 \times 10^{-3}$ & $1.05$ & $2.27 \times 10^{-2}$ & $1.00$ \\
 $1/256$ & $1.95 \times 10^{-3}$ & $1.07$ & $2.03 \times 10^{-3}$ & $1.12$ & \quad & $2.86 \times 10^{-3}$ & $1.15$ & $1.12 \times 10^{-2}$ & $1.02$ \\
\midrule
 $h$  &  $Er\,3$ &  slope  & $Er\,3^\prime$ & slope & \quad & $Er\,4$ &  slope  & $Er\,4^\prime$ & slope \\
\hline
 $1/16$   & $2.02 \times 10^{-1}$ & --         & $2.13 \times 10^{-1}$ & --        & \quad & $1.70 \times 10^{-1}$ & --         & $1.81 \times 10^{-1}$ & -- \\
 $1/32$   & $7.11 \times 10^{-2}$ & $1.50$ & $7.38 \times 10^{-2}$ & $1.53$ & \quad & $4.99 \times 10^{-2}$ & $1.77$ & $5.21 \times 10^{-2}$ & $1.80$ \\
 $1/64$   & $2.67 \times 10^{-2}$ & $1.41$ & $2.73 \times 10^{-2}$ & $1.43$ & \quad & $1.86 \times 10^{-2}$ & $1.42$ & $1.90 \times 10^{-2}$ & $1.46$ \\
 $1/128$ & $1.11 \times 10^{-2}$ & $1.27$ & $1.12 \times 10^{-2}$ & $1.28$ & \quad & $8.39 \times 10^{-3}$ & $1.15$ & $8.44 \times 10^{-3}$ & $1.17$ \\
 $1/256$ & $5.01 \times 10^{-3}$ & $1.15$ & $5.03 \times 10^{-3}$ & $1.16$ & \quad & $3.69 \times 10^{-3}$ & $1.19$ & $3.69 \times 10^{-3}$ & $1.19$ \\
\midrule
 $h$  &  $Er\,5$ &  slope  & $Er\,5^\prime$ &  slope & \quad & $Er\,6$ &  slope  &  $Er\,6^\prime$ &  slope     \\
\hline
 $1/16$   & $2.80 \times 10^{-2}$ & --         & $2.80 \times 10^{-2}$ & --        & \quad & $1.22 \times 10^{-1}$ & --         & $1.64 \times 10^{-1}$ & --  \\
 $1/32$   & $1.14 \times 10^{-2}$ & $1.30$ & $1.14 \times 10^{-2}$ & $1.30$ & \quad & $4.41 \times 10^{-2}$ & $1.47$ & $6.95 \times 10^{-2}$ & $1.24$ \\
 $1/64$   & $4.90 \times 10^{-3}$ & $1.21$ & $4.90 \times 10^{-3}$ & $1.21$ & \quad & $1.72 \times 10^{-2}$ & $1.35$ & $3.22 \times 10^{-2}$ & $1.11$ \\
 $1/128$ & $2.30 \times 10^{-3}$ & $1.09$ & $2.30 \times 10^{-3}$ & $1.09$ & \quad & $7.64 \times 10^{-3}$ & $1.17$ & $1.56 \times 10^{-2}$ & $1.04$ \\
 $1/256$ & $1.11 \times 10^{-3}$ & $1.05$ & $1.11 \times 10^{-3}$ & $1.05$ & \quad & $3.59 \times 10^{-3}$ & $1.09$ & $7.69 \times 10^{-3}$ & $1.02$ \\
\bottomrule
\end{tabular}
\end{table}
%
%
%
%
%
%%%%%%%%%%%%%%%%%%%%%%%%%%%%%%%%%%%%%%%%%%%%%
%%%%%%%%%%%%%%%%%%%%%%%%%%%%%%%%%%%%%%%%%%%%%
 \newcommand{\noop}[1]{}

\end{document}